\theoremstyle{definition}
\newtheorem{definition}{Definition}
\newtheorem{theorem}[definition]{Theorem}
\newtheorem{lemma}[definition]{Lemma}
\newtheorem{cor}[definition]{Corollary}
\newtheorem{example}[definition]{Example}
\newtheorem{remark}[definition]{Remark}
\newenvironment{manualtheorem}[1]{
  
  \manualtheoreminner
}{\endmanualtheoreminner}
\DeclareMathOperator{\terr}{\operatorname{terr}}
\DeclareMathOperator{\sym}{\operatorname{Sym}}
\DeclareMathOperator{\wrg}{\wr_{\Gamma}}
\DeclareMathOperator{\supp}{\operatorname{supp}}
\DeclareMathOperator{\fix}{\operatorname{fix}}
\DeclareMathOperator{\Yade}{\operatorname{Yade}}
\DeclareMathOperator{\symg}{\operatorname{Sym}(\Gamma)}
\DeclareMathOperator{\ld}{\operatorname{ld}}
\renewcommand{\P}{\mathcal{P}}
\renewcommand{\L}{\mathcal{L}}
\DeclareMathOperator{\C}{\mathcal{C}}
\renewcommand{\ker}{\operatorname{ker}}
\DeclareMathOperator{\im}{\operatorname{im}}
\newcommand\restr[2]{{            
\left.\kern-\nulldelimiterspace 
#1                              
\vphantom{\big|}                
\right|_{#2}                    
}}
\newcommand\myrestriction[2]{{    
\left.\kern-\nulldelimiterspace 
#1                              
\vphantom{\big|}                
\right|_{#2}^{\Gamma}           
}}
\def\moverlay{\mathpalette\mov@rlay}
\def\mov@rlay#1#2{\leavevmode\vtop{
\baselineskip\z@skip \lineskiplimit-\maxdimen
\ialign{\hfil$\m@th#1##$\hfil\cr#2\crcr}}}
\newcommand{\charfusion}[3][\mathord]{
#1{\ifx#1\mathop\vphantom{#2}\fi
	\mathpalette\mov@rlay{#2\cr#3}
	}
\ifx#1\mathop\expandafter\displaylimits\fi}
\newcommand{\cupdot}{\charfusion[\mathbin]{\cup}{\cdot}}
\newcommand{\bigcupdot}{\charfusion[\mathop]{\bigcup}{\cdot}}
\newcommand{\zset}[1]{\{0,\dotsc, #1 \}}
\newcommand*\bigcdot{\mathpalette\bigcdot@{0.6}}
\newcommand*\bigcdot@[2]{\mathbin{\vcenter{\hbox{\scalebox{#2}{$\m@th#1\bullet$}}}}}
\newcommand{\zero}{\bigcdot} 
\newcommand{\highlight}[3]{\mathchoice%
  {\colorbox{#1}{\color{#2}$\displaystyle#3$}}%
  {\colorbox{#1}{\color{#2}$\textstyle#3$}}%
  {\colorbox{#1}{\color{#2}$\scriptstyle#3$}}%
  {\colorbox{#1}{\color{#2}$\scriptscriptstyle#3$}}}%
\definecolor{colorZ}	{RGB}{ 221, 221, 221 } 
\definecolor{colorA}    {RGB}{ 221, 204, 119 } 
\definecolor{colorB}    {RGB}{ 204, 102, 119 } 
\definecolor{colorC}    {RGB}{ 136, 204, 238 } 
\definecolor{colorD}    {RGB}{  68, 170, 153 } 
\definecolor{colorE}    {RGB}{ 153, 153,  51 } 
\definecolor{colorF}    {RGB}{ 136,  34,  85 } 
\definecolor{colorG}    {RGB}{  51,  34, 136 } 
\definecolor{colorH}    {RGB}{  17, 119,  51 } 
\newcommand{\highlightZ}[1]{\highlight{colorZ!30}{colorZ!50!black}{#1}}
\newcommand{\highlightA}[1]{\highlight{colorA!30}{colorA!50!black}{#1}}
\newcommand{\highlightB}[1]{\highlight{colorB!30}{colorB!50!black}{#1}}
\newcommand{\highlightC}[1]{\highlight{colorC!30}{colorC!50!black}{#1}}
\newcommand{\highlightD}[1]{\highlight{colorD!30}{colorD!50!black}{#1}}
\newcommand{\highlightE}[1]{\highlight{colorE!40}{colorE!10!white}{#1}}
\newcommand{\highlightF}[1]{\highlight{colorF!40}{colorF!10!white}{#1}}
\newcommand{\highlightG}[1]{\highlight{colorG!40}{colorG!10!white}{#1}}
\newcommand{\highlightH}[1]{\highlight{colorH!40}{colorH!10!white}{#1}}
\author{Dominik Bernhardt, Alice C. Niemeyer,\\ Friedrich Rober, Lucas Wollenhaupt}
\title{Conjugacy classes and centralisers in wreath products}
\begin{document}
\maketitle
\begin{abstract}
In analogy to the disjoint cycle decomposition in permutation groups, Ore and Specht
define a decomposition of elements of the full monomial group and exploit
this to describe conjugacy classes and centralisers of elements in the full monomial group.\\
We generalise their results to wreath products whose base group need not be
finite and whose top group acts faithfully on a finite set.
We parameterise conjugacy classes and
centralisers of elements in such wreath products explicitly.
For finite wreath products, our approach yields efficient algorithms
for finding conjugating elements, conjugacy classes, and centralisers.
\end{abstract}
\section{Introduction}
Wreath product constructions feature prominently in the
theory of permutation groups, see for example
\cites{Baddeley_Transitive_Simple, Kovacs} and references therein.
A general approach to working with
permutation groups
is first to apply a reduction to primitive groups and subsequently 
to study primitive groups. Many problems for primitive groups
are solved with the help of the O'Nan-Scott Theorem, see for example
\cites{LPS, DM}, in which most classes are defined via wreath products.

To our knowledge, Specht was the first to describe the conjugacy classes of
the full monomial group, namely $K\wrg \symg$ for a finite set
$\Gamma$, see \cite[S\"atze II, III, IV]{Specht}.
Specht and Ore define  a
wreath cycle decomposition for elements of full monomial groups in analogy to
the  disjoint  cycle  decomposition   of  elements  of  the  symmetric
group. Moreover, Ore gives criteria when two elements  are conjugate
in the group and determines the centraliser of an element, see \cite[Theorem 8]{Ore}.\\
In order to make Specht's and Ore's theory more widely known, we have
decided to restate their results in the modern language of wreath products.
Moreover, we have extended the results to the more general setting of
arbitrary wreath products $K\wr_\Gamma H$. We no longer require $H$ to be the
full symmetric group on $\Gamma$, however, we assume that $H$ acts
faithfully on $\Gamma$. For a detailed description of the groups we
investigate, see Hypothesis \ref{setting}.\\
We describe the wreath cycle decomposition and
define a \emph{territory decomposition} for an element of $K\wrg H$, see 
Definition \ref{def:territory_decomposition}, which generalises the
\emph{type} of a
wreath product element defined by Specht \cite[(8b)]{Specht}.
Viewing elements in wreath products in
a disjoint wreath cycle decomposition quickly becomes very  intuitive  and
highlights much of the underlying structure. For example, it is just
as  easy  to read  off  the  order of  an  element  in disjoint wreath cycle
decomposition in a wreath product as it is to read off the order of an
element from its disjoint cycle  decomposition in the symmetric group.
Additionally, it is a very efficient way of representing elements of wreath
products on a computer.

The main aim of this paper is to prove several results
that can be summarised as follows:
\begin{theorem}\label{main-thm-1}
Two elements $w=(f,h)$ and $v=(e,g)$ of $K\wrg \symg$
are conjugate in $K\wrg H$ if and only if
there exists an element $t\in H$ that conjugates $h$ to $g$ and maps the
territory decomposition of $w$ to that of $v$. Moreover, there exists an
explicit, computable bijection from an iterated cartesian product 
into the conjugacy classes of $K\wrg H$.
\end{theorem}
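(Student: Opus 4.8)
The plan is to reduce everything to the level of a single wreath cycle and to track two invariants: the $H$-conjugacy class of the top component $h$, and, for each cycle of $h$ on $\Gamma$, the $K$-conjugacy class of the associated \emph{cycle product}. First I would fix the conjugation convention in $K\wrg\symg$: writing $w=(f,h)$ and conjugating by $(b,t)$ with $t\in H$ and $b\in K^\Gamma$ yields an element whose top component is $tht^{-1}$ and whose base component is obtained from $f$ by translating its support along $t$ and multiplying pointwise by the appropriate values of $b$ and $b^{-1}$. Reading this formula off directly shows that conjugation permutes the cycles of $h$ exactly as $t$ does, and that on each cycle the cycle product is replaced by a $K$-conjugate of itself, the conjugating element being the value of $b$ at the chosen base point of that cycle. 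This is precisely the content of ``$t$ maps the territory decomposition of $w$ to that of $v$'', so the forward implication of the equivalence in Theorem \ref{main-thm-1} is essentially a bookkeeping computation once the conjugation formula and Definition \ref{def:territory_decomposition} are in place.

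For the reverse implication, the hypotheses supply a single $t\in H$ with $tht^{-1}=g$ matching the two territory decompositions, and the task is to promote $t$ to a genuine conjugator $(b,t)$ by solving for $b\in K^\Gamma$. I would argue cycle by cycle: since $t$ carries each cycle of $h$ onto a cycle of $g$ of the same length whose cycle product is $K$-conjugate, it suffices to determine the values of $b$ on the support of each such cycle. Enumerating that support as $\gamma_0,\gamma_1,\dots,\gamma_{\ell-1}$ with $\gamma_i^h=\gamma_{i+1}$, the conjugation formula turns the matching requirement into a telescoping system of $\ell$ equations in the $\ell$ unknowns $b(\gamma_0),\dots,b(\gamma_{\ell-1})$: choosing $b(\gamma_0)$ to be an element of $K$ that conjugates one cycle product to the other, the remaining equations determine $b(\gamma_1),\dots,b(\gamma_{\ell-1})$ successively, and the wrap-around equation closes up consistently precisely because the cycle products agree up to $K$-conjugacy. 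Collecting these solutions over all cycles — including the length-one cycles at the points fixed by $h$ — assembles the desired global conjugator $b\in K^\Gamma$. I expect this reverse direction to be the main obstacle, both because the telescoping must be set up with the correct indices and because one must verify that committing to $t$ in advance never obstructs solvability: all the required $K$-conjugacy has to be absorbable into the free choice of $b$ at the cycle base points.

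With the equivalence established, I would construct the bijection by sending a conjugacy class to its invariant data. The raw data are the $H$-conjugacy class of $h$ together with, for each cycle of $h$, the $K$-conjugacy class of its cycle product; these are not independent, since the centraliser $C_H(h)$ permutes the cycles of $h$ of equal length and hence permutes the attached $K$-classes. Organising the cycles by length and choosing a transversal for the $C_H(h)$-action exhibits the set of valid data as an iterated cartesian product: one factor indexing the class of $h$ in $H$, and, for each length $\ell$, a symmetrised power of the set of $K$-conjugacy classes indexed by the cycles of length $\ell$. The equivalence already proved gives well-definedness and injectivity of the map, while surjectivity follows by writing down, for any choice of data, an explicit representative $(f,h)$ whose base function is supported at one point per cycle and takes there the prescribed cycle-product value. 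Finally, in the finite case every step — computing the cycle decomposition, the cycle products, transversals for the centraliser action, and representatives of $K$-conjugacy classes — is effective, which yields the claimed computability and, with a little more care, the algorithms for finding conjugating elements and centralisers.
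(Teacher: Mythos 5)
Your treatment of the equivalence is essentially the paper's own route: the forward direction is the computation that conjugating $(f,h)$ by $(b,t)$ permutes the cycles of $h$ exactly as $t$ does and replaces each cycle product (the paper's yade) by a $K$-conjugate, with conjugator the value of $b$ at the base point (Lemma \ref{conjugateInWreath}, Corollary \ref{conjOfTerrDecomp}); the reverse direction is the telescoping system of Lemma \ref{le:conjugateCs} solved cycle by cycle and assembled into a global base component as in Lemma \ref{lem:load_invariant} and Theorem \ref{conjugationInWreathProd}, with the length-one cycles handled separately. That half of your argument is sound and matches the paper.

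The gap is in the ``moreover'' clause. The bijection the paper intends (Theorem \ref{thm-cc}) goes from an iterated cartesian product onto the set of \emph{elements} of a single class $w^W$: its factors are $h^H$, the orbit $\mathcal{P}(w)^{C_H(h)}$, one $K$-conjugacy class per wreath cycle (the possible yades), and one copy of $K^{\supp(h_z)\setminus\{\gamma_z\}}$ per nontrivial cycle, this last factor accounting for the freedom in the base component once the yade is fixed. Your proposal never parameterises the elements at all, so this part is simply missing. Moreover, even under your alternative reading (parameterising the \emph{set} of classes), the invariant you propose is wrong for general $H\leq\symg$: recording, for each cycle length, only the multiset of $K$-classes of cycle products (``a symmetrised power'') describes the $C_{\symg}(h)$-orbit, not the $C_H(h)$-orbit. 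For a proper subgroup $H$, the centraliser $C_H(h)$ need not induce the full symmetric group on the cycles of a given length, so two elements with identical multiset data can lie in different $W$-classes; the paper's Example \ref{Example-Wreath-Product-Conjugacy-Problem} exhibits exactly this with $H_1$ and $H_3$, where $w$ and $v$ carry the same loads with the same multiplicities yet are not conjugate. The correct datum is the territory decomposition $\mathcal{P}(w)$ as a labelled object, with classes corresponding to $C_H(h)$-orbits of such data (the unnumbered theorem following Theorem \ref{thm-cc}); these orbit spaces are in general only sets of representatives, not cartesian products of symmetrised powers.
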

\begin{theorem}\label{main-thm-2}
There exists an explicit, computable bijection from an iterated cartesian product into
the centraliser of an element of $K\wrg \symg$ in $K\wrg H$. Moreover, this 
centraliser is an extension of two groups. 
\end{theorem}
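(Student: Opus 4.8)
The plan is to realise the centraliser $C_{K\wrg H}(w)$ of $w=(f,h)$ as an extension whose normal subgroup records the admissible base components and whose quotient records the admissible top components, and then to coordinatise each of these two pieces by a cartesian product. Throughout I work in the wreath cycle / territory decomposition of $w$ furnished by Definition \ref{def:territory_decomposition}, so that $h\in\symg$ acts on $\Gamma$ with orbits $O$, each carrying a cycle product $c_O\in K$ that is well defined up to $K$-conjugacy.

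First I would project along $\pi\colon K\wrg H\to H$, $(e,g)\mapsto g$. A direct computation with the wreath multiplication shows that $(e,g)$ centralises $w$ only if $g\in C_H(h)$, and that comparing base components then yields the pointwise identity $e(\gamma)\,f(g^{-1}\gamma)=f(\gamma)\,e(h^{-1}\gamma)$ for all $\gamma\in\Gamma$. I read this identity in two stages. Specialising to $g=1$ describes the base part $B:=C_{K\wrg H}(w)\cap K^{\Gamma}$ through the recursion $e(\gamma)=f(\gamma)\,e(h^{-1}\gamma)\,f(\gamma)^{-1}$ along each $h$-orbit. Traversing an orbit of length $\ell$ once shows that $e$ is freely determined by its value at a chosen base point, subject only to that value lying in $C_K(c_O)$. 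Hence $B\cong\prod_{O}C_K(c_O)$, a cartesian product of centralisers in $K$ indexed by the $h$-orbits, which is precisely the data packaged by the territory decomposition.

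Second I would identify the image $Q:=\pi\bigl(C_{K\wrg H}(w)\bigr)\le C_H(h)$. As $g$ commutes with $h$ it permutes the $h$-orbits preserving length, and the mixed identity above forces the cycle product of each orbit to be $K$-conjugate to that of its image under $g$. I would prove the converse, that every $g\in C_H(h)$ \emph{stabilising} the territory decomposition lifts to a centralising element, by solving $e(\gamma)\,f(g^{-1}\gamma)=f(\gamma)\,e(h^{-1}\gamma)$ one $g$-cycle of orbits at a time: along a $g$-cycle $O_1\to\cdots\to O_m\to O_1$ one propagates a conjugating element of $K$ from orbit to orbit and checks that the constraint incurred on closing the cycle is solvable exactly because the cycle products $c_{O_1},\dots,c_{O_m}$ are conjugate. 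This simultaneously yields $Q=\{g\in C_H(h):g\text{ stabilises the territory decomposition of }w\}$ and a distinguished base component $e_g$ for each $g\in Q$, i.e.\ a computable section $s\colon Q\to C_{K\wrg H}(w)$, $s(g)=(e_g,g)$. Since $B=\ker\bigl(\pi|_{C_{K\wrg H}(w)}\bigr)$, exactness of $1\to B\to C_{K\wrg H}(w)\xrightarrow{\pi}Q\to 1$ is immediate, proving the \textbf{extension of two groups} assertion, with normal subgroup $B$ and quotient $Q$.

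Finally I would assemble the bijection. For fixed $g\in Q$ the admissible base components form the left coset $B\cdot s(g)$, so every centralising element is uniquely $\beta\,s(g)$ with $\beta\in B$ and $g\in Q$; thus $(\beta,g)\mapsto\beta\,s(g)$ is a bijection $B\times Q\to C_{K\wrg H}(w)$. Grouping orbits by type $\tau=(\ell,[c])$ gives $B\cong\prod_{\tau}C_K(c_\tau)^{m_\tau}$, and $Q$ is coordinatised by the orbit-permutation data supplied by the territory decomposition (in the full-symmetric case a product $\prod_\tau(\mathbb{Z}/\ell_\tau\times S_{m_\tau})$-type group, in general its intersection with $H$, parametrised by the transversal computed in the lifting step). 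Composing expresses $B\times Q$ as an iterated cartesian product all of whose ingredients, namely cycle products, conjugating elements in $K$, and orbit permutations in $H$, are computable from $w$; this is the claimed explicit, computable bijection. I expect the lifting step to be the main obstacle: verifying that the orbit-cycle recursion for $e$ is consistent precisely when $g$ stabilises the territory decomposition, and organising the propagated conjugating elements into a canonical section $s$, is the one place where a genuine compatibility (cocycle-type) condition must be resolved rather than merely bookkept.
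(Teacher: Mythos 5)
Your proposal is correct and follows essentially the same route as the paper's Theorem \ref{thm:structure_centraliser} and Lemma \ref{extension_centraliser}: the normal subgroup is the product over $h$-orbits of the centralisers in $K$ of the cycle products (with full copies of $K$ on the points outside the territory), the quotient is $\operatorname{Stab}_{C_H(h)}(\mathcal{P}(w))$, and a computable section assembles the bijection; the paper merely conjugates $w$ into a sparse wreath cycle decomposition first (Corollary \ref{cor-sparse}) to simplify the explicit base-component formulas. One small remark: the cocycle-type compatibility condition you anticipate on closing a $g$-cycle of orbits does not actually arise --- iterating the identity $e(\gamma)f(g^{-1}\gamma)=f(\gamma)e(h^{-1}\gamma)$ once around a single $h$-orbit already closes the recursion, forcing only that the value of $e$ at the base point conjugate the yade of that orbit to the yade of its $g$-image, and these constraints are solvable independently orbit by orbit.
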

The statements of our main theorems facilitate efficient
computation of centralisers, conjugacy classes of elements and conjugacy 
testing on a computer. This approach has been implemented in the \textsf{GAP}-package
\textit{WPE} \cite{Rober_Package} by the third author. For example,
one is now able to test elements for conjugacy and to
compute conjugating elements in groups as large as $S_{25}\wr S_{100}$ in a
few seconds. For further computational results, see Section \ref{section-6}.

Cannon and Holt describe algorithms in \cite{Holt_Cannon} to compute
centralisers and
conjugacy classes of elements and perform conjugacy testing in a finite
group with trivial soluble radical. They embed the
given group into a direct product of certain wreath products and
solve these tasks for each direct factor. 
Hulpke, see \cite{Hulpke_cc}, presents an algorithm to compute the conjugacy
classes in finite permutation groups in which he considers a more general
situation of subdirect products of the base group.
Compared to both \cite{Holt_Cannon} and \cite{Hulpke_cc}, our methods are
further reaching for wreath products as we
exploit the underlying \emph{wreath cycle decomposition}.
We translate the explicit descriptions in Theorems \ref{main-thm-1} and \ref{main-thm-2}
into very efficient algorithms in practice, see Section
\ref{section-6}. We hope that our methods could be used to improve the algorithms of
Cannon and Holt when treating the wreath products that occur as direct factors.
\subsection{The structure of this paper}
In Section 2 in Theorem
\ref{thm:wreathcycle-decomposition} we restate, in the modern language of
wreath products, Ore's decomposition of a wreath product element
generalising the decomposition of a permutation into disjoint cycles.
In Section 3 we first define the territory decomposition of a
wreath product element, see Definition \ref{def:territory_decomposition}. Next, we
give a criterion to decide whether two elements of $K\wrg \symg$ are conjugate in 
$K\wrg H$ and, if they are, construct a conjugating element,
see Theorem \ref{conjugationInWreathProd}. 
Theorem \ref{thm-cc} in Section 4 parameterises the conjugacy classes of
$K\wrg H$ explicitly. Theorem \ref{main-thm-1} immediately follows from 
Theorems \ref{conjugationInWreathProd} and \ref{thm-cc}. Theorem 
\ref{thm:structure_centraliser} parameterises the centraliser of a wreath
product element in $K\wrg H$ explicitly. Together with Theorem
\ref{extension_centraliser} this immediately implies Theorem \ref{main-thm-2}.
As some of  the results may seem quite technical  on first reading, we
have  illustrated  them with  several  examples. The final section
gives evidence of the computational power of our results.

\section{Wreath Cycle Decompositions in Wreath Products}
The aim of this section is to generalise the concept of a disjoint cycle
decomposition of permutations to arbitrary elements of wreath products. The
main statement of this section is Theorem
\ref{thm:wreathcycle-decomposition} in which
we give an explicit decomposition of wreath product elements as a product of
disjoint wreath cycles. This result is well known, see for example
Ore \cite{Ore} or Kerber et. al. \cite[Section 4.2]{Kerber}. We adapt this theorem
to our notation and give an explicit constructive formula for the
decomposition of a wreath product element.
For a permutation $h\in \symg$, we denote by $\supp_\Gamma(h)$ its support on $\Gamma$, i.e.
the set of points in $\Gamma$ moved by $h$. The set of 
fixed points of a permutation $h$ is denoted by $\fix_\Gamma(h)$.
If the choice of $\Gamma$ is clear from the context we omit it. For the entire paper, 
we fix the following setting:
\begin{manualtheorem}{A}\label{setting}
Let $K$ be a not necessarily finite group, $\Gamma$ a finite set and $H\leq \sym(\Gamma)$.
Further, set $W \coloneqq K\wr_\Gamma H \coloneqq K^\Gamma \rtimes H =
\{(f,h) \,:\, f \in K^\Gamma,\, h \in H\}$ and denote by $S \coloneqq K\wr_\Gamma
\sym(\Gamma)$ the full monomial group (on $K$ with respect to $\Gamma$).
We denote the set of functions from $\Gamma$ to $K$ by
$K^\Gamma$ and apply functions from the right, i.e. we write $[\gamma]f$
for the image of $\gamma\in \Gamma$ under $f\in K^\Gamma$. 
Accordingly, all groups act from the right. 
\end{manualtheorem}

We start by extending the concepts of support and cycles in permutation groups
to arbitrary wreath products. \emph{Wreath cycles} were already introduced
by Ore \cite{Ore} who called them \emph{monomial cycles}.
\begin{definition}\label{Definition-Wreath-Cycle}
Let \(w \coloneqq (f,h),\, v \coloneqq (e, g) 
\in K \wrg H \).
\begin{enumerate}[label=\arabic*.]
\item The element $h\in H$ is called the
\textbf{top component} of $w=(f,h)$ and the element $f\in K^\Gamma$ is called the
\textbf{base component} of $w$.
\item We define \(\terr_{\Gamma}(w) \coloneqq \supp_{\Gamma}(h) \cup
\{\gamma \in \Gamma \,:\, [\gamma]f \neq 1_K\}\). We call \(\terr_{\Gamma}(w)\) the 
\emph{territory} of \(w\). If the choice of \(\Gamma\) is clear from
context or of no importance, we write \(\terr(w)\) instead.
\item \label{def:terr} We call \(w=(f,h)\) a \emph{wreath cycle} if either \(h\) induces the
identity  on \(\Gamma\) and \(\vert \terr(w) \vert = 1\) or \(h\)
induces a single non-trivial cycle in its action on \(\Gamma\) and \(\terr(w) = \supp(h)\).
\item We say \(w\) and \(v\) are \emph{disjoint} if
\(\terr(w) \cap \terr(v) = \emptyset\).
\end{enumerate}
\end{definition}
If $\Gamma= \{1,\dots,n\}\subseteq \mathbb{Z}_{>0}$,
we denote an element $w\coloneqq (f,h)\in W$ as $(f,h)=(f_1,\dots,f_n;h)$,
where for $i\in \Gamma$ the element $f_i\coloneqq [i]f$ is called the $i$-th
\emph{base component} of $w$. For $v\coloneqq (e,g) = (e_1,\dots, e_n;g)\in
W$, multiplication is given by \[w\cdot v = \left(f\cdot e^{h^{-1}}\!,\, h\cdot g\right) =
\left(f_1\cdot e_{1^h},\dots, f_n\cdot e_{n^h}; h\cdot g\right)\] and hence inverses
are given by \[w^{-1} = \left(\left(f^{-1}\right)^h,h^{-1}\right) =
\left(\left(f_{1^{h^{-1}}}\right)^{-1}, \dots, \left(f_{n^{h^{-1}}}\right)^{-1};
h^{-1}\right).\]
\begin{example}
\label{Example-Wreath-Product-Territory}
Let $K \coloneqq \sym(\{1, \dots, 4\})$,
$\Gamma \coloneqq \{1,\dots, 8\}$ and
$S \coloneqq K \wr \sym(\Gamma)$.
Throughout this paper the element 
\[
w \coloneqq \big(
\overset{\highlightC{1}}{(1,2)(3,4)}, \hspace*{0.5em}
\overset{\highlightC{2}}{(3,4)},\hspace*{0.5em}
\overset{\highlightZ{3}}{()},\hspace*{0.5em}
\overset{\highlightC{4}}{(1,2),\hspace*{0.5em}}
\overset{\highlightC{5}}{(1,2,3)},\hspace*{0.5em}
\overset{\highlightZ{6}}{()},\hspace*{0.5em}
\overset{\highlightC{7}}{(1,2)},\hspace*{0.5em}
\overset{\highlightZ{8}}{()};\hspace*{0.5em}
\overset{\text{top}}{(\highlightB{1,2})(\highlightB{3,4})(\highlightB{5,6})}
\big)
\]
with $w = (f,h)\in S$ is used in all examples. The territory of $w$ is
\[\terr(w)
= \{\highlightB{1,2,3,4,5,6}\} \cup
\{\highlightC{1,2,4,5,7}\}
= \{1,2,3,4,5,6,7\}.\]
In Lemma \ref{disjointElemCommute} we prove that elements with
disjoint territories commute,
which can be viewed of as an extension of the fact that permutations
with disjoint support commute.\\
Now consider the following two elements of $S$:
\[
\begin{array}{l*{10}{c}}
v \coloneqq \bigl(
& \overset{\highlightZ{1}}{()},
& \overset{\highlightZ{2}}{()},
& \overset{\highlightZ{3}}{()},
& \overset{\highlightZ{4}}{()},
& \overset{\highlightZ{5}}{()},
& \overset{\highlightZ{6}}{()},
& \overset{\highlightC{7}}{(1,2)},
& \overset{\highlightZ{8}}{()};
& \overset{\text{top}}{()}
&\bigr), \\
u \coloneqq \bigl(
& \overset{\highlightC{1}}{(1,2)(3,4)},
& \overset{\highlightC{2}}{(3,4)},
& \overset{\highlightZ{3}}{()},
& \overset{\highlightZ{4}}{()},
& \overset{\highlightZ{5}}{()},
& \overset{\highlightZ{6}}{()},
& \overset{\highlightZ{7}}{()},
& \overset{\highlightZ{8}}{()};
& \overset{\text{top}}{(\highlightB{1,2})}
&\bigr).
\end{array}
\]
These elements are examples of the two different types of wreath
cycles in part 3 of Definition \ref{Definition-Wreath-Cycle}.
For $v$, the top component acts trivially on $\Gamma$
and $\vert \terr(v) \vert = \vert \{7\} \vert = 1$.
For $u$, we observe that the top component of $u$
induces the single cycle $(1,2)$ on $\Gamma$
and $\terr(u) = \{1,2\} = \supp((1,2))$.
\end{example}
It is sometimes useful to write the territory of a wreath product element
as a disjoint union of the support of the top component and the fixed points of
the top component that are contained in the territory of the wreath product
element. This will be used throughout this paper in several proofs.
\begin{remark}\label{decomposition-territory}
Let $w \coloneqq (f,h)\in K\wr_\Gamma H$. Then
$\terr(w) = \supp(h) \cupdot \left(\fix(h)\cap \terr(w)\right)$.
\end{remark}
The next lemma shows that disjoint wreath product elements commute.

\begin{lemma}\label{disjointElemCommute} 
Let \(w = (f,h)\) and \(v = (e,g)\) be two disjoint elements of \(K
\wr_\Gamma H\). Then \(w\) and \(v\) commute.
\end{lemma}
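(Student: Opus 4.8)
The plan is to compute both products $wv$ and $vw$ via the multiplication formula and compare their top and base components separately. Writing $wv = (f\cdot e^{h^{-1}},\, hg)$ and $vw = (e\cdot f^{g^{-1}},\, gh)$, equality of the two elements reduces to two claims: that the top components satisfy $hg = gh$ in $H$, and that the base components agree pointwise, i.e.\ $[\gamma]f\cdot[\gamma^h]e = [\gamma]e\cdot[\gamma^g]f$ for every $\gamma\in\Gamma$.

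For the top components, I would note that $\supp(h)\subseteq\terr(w)$ and $\supp(g)\subseteq\terr(v)$, so disjointness of the territories forces $\supp(h)\cap\supp(g)=\emptyset$. Since permutations of $\Gamma$ with disjoint support commute, $hg=gh$, settling the top component.

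The heart of the argument — and the step I expect to require the most care — is the pointwise identity for the base components. Here I would run a case distinction on where a given $\gamma$ lies relative to the two territories, repeatedly using that $\terr(w)\cap\terr(v)=\emptyset$ forces $[\delta]f=1_K$ or $[\delta]e=1_K$ depending on which territory $\delta$ avoids, together with the fact that $\supp(h)$ is $h$-invariant (so $\gamma\in\supp(h)$ implies $\gamma^h\in\supp(h)\subseteq\terr(w)$), and likewise for $g$. Concretely: if $\gamma\in\terr(w)$ then $\gamma\notin\terr(v)$, so $[\gamma]e=1_K$ and $\gamma\notin\supp(g)$ gives $\gamma^g=\gamma$; splitting according to the decomposition $\terr(w)=\supp(h)\cupdot(\fix(h)\cap\terr(w))$ of Remark \ref{decomposition-territory} shows $[\gamma^h]e=1_K$ in either case, whence both sides equal $[\gamma]f$. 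The case $\gamma\in\terr(v)$ is symmetric, giving both sides equal to $[\gamma]e$, and if $\gamma$ lies in neither territory all four relevant components are trivial and both sides equal $1_K$.

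Combining the commuting top components with the pointwise agreement of the base components yields $wv=vw$. The only subtlety worth flagging is keeping track of the shifted arguments $\gamma^h$ and $\gamma^g$, which is exactly where invariance of the supports under their own permutations is needed; everything else is bookkeeping enabled by Remark \ref{decomposition-territory}.
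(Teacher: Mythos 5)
Your proof is correct and follows essentially the same route as the paper: both arguments reduce the claim to commutativity of the disjoint top components plus a pointwise comparison of the base components using disjointness of the territories. The only cosmetic difference is that the paper packages the base-component check as the three identities $fe=ef$, $e^h=e$, $f^g=f$, whereas you verify the combined identity $[\gamma]f\cdot[\gamma^h]e=[\gamma]e\cdot[\gamma^g]f$ directly by cases.
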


\begin{proof}
Let \(w = (f,h)\) and \(v = (e,g)\) be disjoint. We first show 
\[fe=ef,\quad e^h=e \text{ and } f^g=f.\]
As $g$ and $h$ are disjoint, 
it follows that $[\gamma]f = 1_K$ or $[\gamma]e = 1_K$ and
thus we have $[\gamma](f\cdot e) = [\gamma](e\cdot f)$ for any $\gamma \in \Gamma$.
Further, for all $\gamma \in \supp(h)$ we have $\gamma,\, \gamma^h \not \in \terr(v)$ and
therefore $[\gamma]e^{h^{-1}} = [\gamma^h]e = 1_K = [\gamma]e$.
Additionally, if $\gamma \not \in \supp(h)$ we have
$[\gamma]e^{h^{-1}} = [\gamma^h]e = [\gamma]e$.\\
This, together with a similar argument for $f^{g^{-1}}$,
shows that \[fe^{h^{-1}} = fe = ef = ef^{g^{-1}}\]
Furthermore, \(h\) and \(g\) are 
disjoint permutations of $\Gamma$ and therefore commute. Thus
\[(f,h)(e,g) = (fe^{h^{-1}},hg) = (ef^{g^{-1}},gh) = (e,g)(f,h).\qedhere\]
\end{proof}
Analogously to a disjoint cycle decomposition for permutations we define
a wreath cycle decomposition of a wreath product element into disjoint wreath cycles.
Just as the individual cycles in a disjoint
cycle decomposition of a permutation need not be elements
of the group, the wreath cycles in the disjoint
wreath cycle decomposition need not be elements of the wreath product.
\begin{definition}
A \emph{wreath cycle decomposition} for a wreath product element $w\in W =
K\wrg H$
is a decomposition of $w$ as $w = \prod_{i=1}^\ell w_i$ where the $w_i\in
S = K\wrg \symg$ are pairwise disjoint wreath cycles for all $i=1,\dots, \ell$.
\end{definition}
Our next aim is to give a formula to compute a wreath cycle decomposition and
to show that it is unique up to ordering of the factors.
Throughout this paper, the following function simplifies the definition 
of certain elements. In particular, it is used in the construction of a
disjoint wreath cycle decomposition.
\begin{definition}
Let $\Omega \subseteq \Gamma$. For a map $f:\Gamma\to K$ we
define 
\[
\myrestriction{f}{\Omega}: \Gamma \to K, \gamma \mapsto
\begin{cases}
[\gamma]f,& \text{if } \gamma \in \Omega\\
1_K,& \text{else}.
\end{cases}
\]
For simplicity, we set $\myrestriction{f}{\gamma}
\coloneqq \myrestriction{f}{\{\gamma\}}$ for $\gamma \in \Gamma$.
\end{definition}
Note that the function $\myrestriction{f}{\Omega}$ agrees with $f$ on all
of $\Omega$ and maps the elements of $\Gamma\setminus \Omega$ to $1_K$.

\begin{theorem}\label{thm:wreathcycle-decomposition} 
Every element of \(K\wrg H\) can be written as a finite product of disjoint 
wreath cycles in $S$. This decomposition is unique up to ordering of the
factors.
\end{theorem}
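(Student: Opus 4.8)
The plan is to build the decomposition explicitly from the disjoint cycle decomposition of the top component, and then to read off uniqueness from the uniqueness of the latter together with the fact that disjointness trivialises the twist in the base group. Write $w = (f,h)$ and let $h = c_1 \cdots c_k$ be the decomposition of $h$ into disjoint nontrivial cycles in $\sym(\Gamma)$. By Remark~\ref{decomposition-territory} we have $\terr(w) = \supp(h) \cupdot (\fix(h) \cap \terr(w))$; enumerate $\fix(h) \cap \terr(w) = \{\gamma_1, \dots, \gamma_m\}$. First I would set
\[
w_i \coloneqq (\myrestriction{f}{\supp(c_i)},\, c_i) \quad (1 \le i \le k), \qquad u_j \coloneqq (\myrestriction{f}{\gamma_j},\, \mathrm{id}) \quad (1 \le j \le m).
\]
Each $w_i$ is a wreath cycle of the second type in Definition~\ref{Definition-Wreath-Cycle}(\ref{def:terr}), since its top component $c_i$ is a single nontrivial cycle and $\terr(w_i) = \supp(c_i)$ equals the support of that top component; each $u_j$ is a wreath cycle of the first type, with trivial top component and $\vert\terr(u_j)\vert = \vert\{\gamma_j\}\vert = 1$. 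Their territories are precisely the blocks $\supp(c_1), \dots, \supp(c_k), \{\gamma_1\}, \dots, \{\gamma_m\}$, which by the remark are pairwise disjoint and cover $\terr(w)$, so the factors are pairwise disjoint.

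It then remains to check that $w$ equals their product. Since the factors are pairwise disjoint, Lemma~\ref{disjointElemCommute} lets me multiply them in any order, and their top components multiply to $c_1 \cdots c_k = h$. For the base component, the key observation—already implicit in the proof of Lemma~\ref{disjointElemCommute}—is that the conjugation twist in the multiplication formula vanishes here: if a base component $e$ is supported inside a set $T$ and $p \in \sym(\Gamma)$ satisfies $\supp(p) \cap T = \emptyset$, then $[\gamma]e^{p^{-1}} = [\gamma^p]e = [\gamma]e$ for every $\gamma$, whence $e^{p^{-1}} = e$. At each stage of the product the accumulated top component is a product of those cycles $c_i$ whose supports avoid the territory of the next factor, so every twist is trivial. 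Consequently the base component of the product is the plain pointwise product $\prod_i \myrestriction{f}{\supp(c_i)} \cdot \prod_j \myrestriction{f}{\gamma_j}$; as these functions have pairwise disjoint supports covering $\terr(w)$, this equals $\myrestriction{f}{\terr(w)} = f$, the last equality because $f$ is trivial off $\terr(w)$. This proves existence.

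For uniqueness, suppose $w = \prod_i a_i = \prod_j b_j$ with the $a_i$ pairwise disjoint wreath cycles, and likewise the $b_j$. The top components of the $a_i$ have pairwise disjoint supports, each contained in the corresponding territory, so their product $h$ is already written in disjoint cycle form; by uniqueness of the disjoint cycle decomposition in $\sym(\Gamma)$, the $a_i$ with nontrivial top component realise exactly the cycles $c_1, \dots, c_k$, each with forced territory $\supp(c_i)$. The twist-triviality above applies to any pairwise disjoint family, so the base component of $w$ is once more the product of the base components of the $a_i$ with disjoint supports; restricting to $\supp(c_i)$ forces the base component of the matching factor to be $\myrestriction{f}{\supp(c_i)}$. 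The remaining $a_i$ have trivial top component and singleton territory, hence are indexed by the points of $\fix(h) \cap \terr(w)$, each with base component $\myrestriction{f}{\gamma}$. The identical description holds for the $b_j$, so the two decompositions coincide up to reordering. The main obstacle is precisely this base-component bookkeeping—establishing once and for all that disjointness annihilates the twist, so that base components of disjoint factors combine by ordinary pointwise multiplication; granting this, both existence and uniqueness reduce cleanly to the classical disjoint cycle decomposition of the top permutation.
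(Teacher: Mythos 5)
Your proposal is correct and follows essentially the same route as the paper: the same explicit decomposition built from the disjoint cycle decomposition of the top component together with the fixed points in the territory, with the product verified via the disjointness/commutativity observation of Lemma \ref{disjointElemCommute}, and uniqueness reduced to the uniqueness of the classical cycle decomposition plus the forced form of the base components. The only cosmetic difference is that the paper verifies the product by splitting each factor into a base part and a top part and regrouping, whereas you argue directly that the conjugation twist vanishes; these are equivalent.
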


\begin{proof}
Let $w \coloneqq (f,h) \in K\wrg H$. We prove the first statement of this theorem by giving an
explicit wreath cycle decomposition of $w$.\\
Let $h = h_1 \cdots h_\ell$ be the disjoint cycle decomposition of $h$ in $\sym(\Gamma)$.
We claim that the following is the desired decomposition of $w$:
\[
w = \prod_{i=1}^\ell(\myrestriction{f}{\supp(h_i)},h_i) \quad \cdot \!
\prod_{\gamma \in \fix(h) \cap \terr(w)} (\myrestriction{f}{\gamma}, 1_H).
\]
Note that all factors are disjoint wreath cycles because their territories
are pairwise disjoint. We now show equality by using Lemma
\ref{disjointElemCommute}, i.e. the fact that disjoint wreath
product elements commute:
\begin{equation}\label{eq:existence}
\begin{aligned}
&\prod_{i=1}^\ell(\myrestriction{f}{\supp(h_i)},h_i) \quad \cdot \!
	\prod_{\gamma \in \fix(h) \cap \terr(w)} (\myrestriction{f}{\gamma}, 1_H)\\
	=\quad &\left(\prod_{i=1}^\ell(\myrestriction{f}{\supp(h_i)},1_H) 
	\cdot (1_{K^\Gamma}, h_i)\right)
	\quad \cdot \! \prod_{\gamma \in \fix(h) \cap \terr(w)} (\myrestriction{f}{\gamma}, 1_H)\\
	=\quad& \left(\prod_{i=1}^\ell ( \myrestriction{f}{\supp(h_i)},1_H) \quad \cdot \!
	\prod_{\gamma \in \fix(h) \cap \terr(w)} (\myrestriction{f}{\gamma},
	1_H) \right) \cdot	\prod_{i=1}^\ell (1_{K^\Gamma}, h_i)\\
	=\quad & (f,1_H)\cdot (1_{K^\Gamma}, h) = (f,h) = w.
\end{aligned}
\end{equation}
To prove uniqueness, suppose $w = \prod_{i=1}^m (e_i,g_i)$ is 
another disjoint wreath cycle
decomposition of $w$. Thus $h=g_1\cdots g_m$ is a disjoint cycle
decomposition of $h$ and we may assume $g_1,\dots, g_k \neq 1_H$ and
$g_{k+1},\dots ,g_m = 1_H$ for some $1\leq k\leq m$.
As the $g_i$ are pairwise disjoint and due to the uniqueness of the
cycle decomposition in $\symg$ we have $k=\ell$ and
there exists a permutation $\sigma \in \sym(\ell)$ such that 
$h_i = g_{i^\sigma}$ for all $i\in \{1, \dots, \ell\}$.
Hence, without loss of generality we may assume
$h_i=g_i$ for all $i\in \{1, \dots, \ell\}$. By a calculation along the
lines of computation (\ref{eq:existence}) above, we obtain $\prod_{i=1}^m e_i = f$. For a
given $\gamma \in \terr(w)$ with $[\gamma]f \neq 1_K$, there exists a
unique $j\in \{1,\dots,m\}$ such that $[\gamma]e_j = [\gamma]f$.\\
First assume $\gamma \in \supp(h_i)$ for some 
$i\in \{1,\dots, \ell\}$.
Note 
\[ \gamma \in \terr((e_j,g_j)) = \supp(g_j) \text{ and } \gamma \in \supp(h_i) =
\terr\left((\myrestriction{f}{\supp(h_i)},h_i)\right)\]
and since $h=g_1\cdots g_\ell = h_1 \cdots h_\ell$ are two cycle decompositions of the same
permutation $h$ we must have $i=j$. 
Since the $(e_j,g_j)$ are pairwise disjoint wreath cycles for all $j=1,\dots, m$ and $f=e_1\cdots
e_m$ we obtain $e_i
= \myrestriction{f}{\supp(h_i)}$ for all $i\in \{1,\dots, \ell\}$.\\
Now we assume
$\gamma \in \fix(h)\cap \terr(w)$. As $\gamma \in
\fix(h)$, we have $j>\ell$. As $g_j = 1_H$ and $ \vert
\terr((e_j,g_j)) \vert =1$ we have $e_j = \myrestriction{f}{\gamma}$.
\end{proof}

\begin{example}
\label{Example-Wreath-Product-Wreath-Cycle-Decomp}
Let $K \coloneqq \sym(\{1, \dots, 4\})$,
$\Gamma \coloneqq \{1,\dots, 8\}$ and
$S \coloneqq K \wr \sym(\Gamma)$.
We give an example of a wreath cycle
decomposition by considering the element $w = (f,h) \in S$
from Example \ref{Example-Wreath-Product-Territory}:
\[
w \coloneqq \big(
 \overset{\highlightB{1}}{(1,2)(3,4)},\hspace*{0.5em}
\overset{\highlightB{2}}{(3,4)},\hspace*{0.5em}
 \overset{\highlightC{3}}{()},\hspace*{0.5em}
 \overset{\highlightC{4}}{(1,2)},\hspace*{0.5em}
 \overset{\highlightD{5}}{(1,2,3)},\hspace*{0.5em}
 \overset{\highlightD{6}}{()},\hspace*{0.5em}
 \overset{\highlightA{7}}{(1,2)},\hspace*{0.5em}
 \overset{\highlightZ{8}}{()};\hspace*{0.5em}
 \overset{\text{top}}{(\highlightB{1,2})(\highlightC{3,4})(\highlightD{5,6})}
\big).
\]
We obtain the decomposition by applying the steps of the proof of Theorem
\ref{thm:wreathcycle-decomposition} to $w$.
We know $\terr(w) = \{1,2,3,4,5,6,7\}$ and $\supp(h) = \{1,2,3,4,5,6\}$.
A disjoint cycle decomposition of $h$ is
$h = \highlightB{h_1}
\cdot \highlightC{h_2}
\cdot \highlightD{h_3}$
with $\highlightB{h_1} = (\highlightB{1,2})$,
$\highlightC{h_2} = (\highlightC{3,4})$ and
$\highlightD{h_3} = (\highlightD{5,6})$.
Next we compute base components in $K^\Gamma$
for each cycle in the disjoint cycle decomposition of $h$, which are given by
$\highlightB{f_1} \coloneqq \myrestriction{f}{\supp(\highlightB{h_1})}$,
$\highlightC{f_2} \coloneqq \myrestriction{f}{\supp(\highlightC{h_2})}$
and $\highlightD{f_3} \coloneqq \myrestriction{f}{\supp(\highlightD{h_3})}$
As an example, we consider
\[
	\highlightB{f_1} = \myrestriction{f}{\supp(\highlightB{h_1})}: \Gamma \to K, \gamma \mapsto
	\begin{cases}
		[\gamma]f, & \text{if } \gamma \in \highlightB{\{1,2\}}\\
		(),& \text{else}
	\end{cases}
\]
and compute some images under this map explicitly.
The image of $1$ under $f_1$ is $[1]\highlightB{f_1} = [1]f =
(1,2)(3,4)$ and $[3]\highlightB{f_1} = ()$.
Note $\fix(h)\cap \terr(w) = \{\highlightA{7}\}$, so the only
remaining point for which we need to compute a base component in $K^\Gamma$
is the point $7$ which is given by $\highlightA{f_4} \coloneqq
\myrestriction{f}{\highlightA{\{7\}}}$.
	We set $\highlightA{h_4} = ()$ and $w_i = (f_i, h_i)$ for $1 \leq i \leq 4$.
This yields a wreath
cycle decomposition of $w$ into the product of the following four
wreath cycles in $S$:
\[
\arraycolsep=0.45em\def\arraystretch{1.8}
\begin{array}{l*{10}{c}}
\highlightB{w_1} = \bigl(
& \overset{\highlightB{1}}{(1,2)(3,4)},
& \overset{\highlightB{2}}{(3,4)},
& \overset{\highlightZ{3}}{()},
& \overset{\highlightZ{4}}{()},
& \overset{\highlightZ{5}}{()},
& \overset{\highlightZ{6}}{()},
& \overset{\highlightZ{7}}{()},
& \overset{\highlightZ{8}}{()};
& \overset{\text{top}}{(\highlightB{1,2})}
&\bigr), \\
\highlightC{w_2} = \bigl(
& \overset{\highlightZ{1}}{()},
& \overset{\highlightZ{2}}{()},
& \overset{\highlightC{3}}{()},
& \overset{\highlightC{4}}{(1,2)},
& \overset{\highlightZ{5}}{()},
& \overset{\highlightZ{6}}{()},
& \overset{\highlightZ{7}}{()},
& \overset{\highlightZ{8}}{()};
& \overset{\text{top}}{(\highlightC{3,4})}
&\bigr), \\
\highlightD{w_3} = \bigl(
& \overset{\highlightZ{1}}{()},
& \overset{\highlightZ{2}}{()},
& \overset{\highlightZ{3}}{()},
& \overset{\highlightZ{4}}{()},
& \overset{\highlightD{5}}{(1,2,3)},
& \overset{\highlightD{6}}{()},
& \overset{\highlightZ{7}}{()},
& \overset{\highlightZ{8}}{()};
& \overset{\text{top}}{(\highlightD{5,6})}
&\bigr), \\
\highlightA{w_4} = \bigl(
& \overset{\highlightZ{1}}{()},
& \overset{\highlightZ{2}}{()},
& \overset{\highlightZ{3}}{()},
& \overset{\highlightZ{4}}{()},
& \overset{\highlightZ{5}}{()},
& \overset{\highlightZ{6}}{()},
& \overset{\highlightA{7}}{(1,2)},
& \overset{\highlightZ{8}}{()};
& \overset{\text{top}}{()}
&\bigr).
\end{array}
\]
\end{example}

\section{Solving the conjugacy problem in wreath products}
Recall the setting from Hypothesis \ref{setting}.
The main result of this section, Theorem \ref{conjugationInWreathProd},
gives a solution to the \emph{conjugacy problem} for wreath product
elements: Given two wreath product elements $w$ and $v$ of $S$, decide
whether they are conjugate in $W$ and give a conjugating element if it
exists. Theorem \ref{main-thm-1} is an immediate consequence.\\
The wreath cycle decomposition plays a crucial role in the solution of the
conjugacy problem. It turns out that it suffices to define many concepts
only for wreath cycles and then apply them to every cycle in the wreath cycle decomposition.
We first define a property of a wreath cycle which is invariant under
conjugation by elements of $S$. For this we need the following function.
\begin{definition}\label{def:yade}
Define $\C(W) \coloneqq \{w \in W \, :\, w \text{ is a wreath cycle}\}$ as
the set of all wreath cycles in $W$ and the \emph{Yade-map} by
\[ [-,-]\Yade \colon \Gamma \times \C(W) \to K,~
(\gamma,(f,h)) \mapsto \prod \limits_{i=0}^{\vert h
\vert -1} [\gamma]\left(f^{h^{-i}}\right) = \prod_{i=0}^{\vert h
\vert -1} \left[\gamma^{h^i}\right]f\,.\]
Given a wreath cycle \(w = (f,h)\) and \(\gamma \in \Gamma\) we call $[\gamma,w]\Yade$
the \emph{Yade} of \(w\) in $\gamma$.
\end{definition}
Yade stands for \textit{Yet another determinant} since, after choosing a suitable
matrix representation of $K\wr_\Gamma \sym(\Gamma)$, the Yade-map 
can be interpreted as a matrix determinant whence Ore called it a determinant in
\cite[p.\ 19]{Ore}. James and
Kerber also introduce this map in \cite[Section 4.3]{Kerber} and call it a
\emph{cycle product}.\\
Observe that \([\gamma,w]\Yade = 1_K\) for all \(\gamma \not \in \terr(w)\)
for a wreath cycle $w\in S$. In computations involving the Yade-map,
it is therefore enough to consider the restriction $\restr{[-,w]\Yade}{\terr(w)}$.

Suppose that $w$ is a wreath cycle. The next lemma shows that
$[\alpha,w]\Yade$ and $[\beta,w]\Yade$ are $K$-conjugate whenever $\alpha$
and $\beta$ are contained in the territory of $w$. We also give a
conjugating element explicitly.
\begin{lemma}\label{conjugateDet} 
Let \(w=(f,h) \in K \wrg H\) be a wreath cycle. Then \([\alpha,w]\Yade\) and
\([\beta,w]\Yade\) are conjugate in \(K\) for every \(\alpha, \beta \in \terr(w)
\). For some \(j \in \zset{\vert h \vert - 1}\) with
$\alpha^{h^j}=\beta$, we have
\[\left([\alpha,w]\Yade\right)^{y} = [\beta,w]\Yade,\quad
\text{where } y = \prod_{i=0}^{j -1} \left[ \alpha^{h^i}\right]f.\]
\end{lemma}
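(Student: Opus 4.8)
The plan is to establish the ``moreover'' part by a direct computation, since the first assertion follows from it: any two points of $\terr(w)$ differ by a power of $h$, so exhibiting an explicit conjugator for each admissible $j$ establishes conjugacy of all yades. The guiding observation is that, by Definition~\ref{def:yade}, $[\alpha,w]\Yade$ is an ordered product of the base components $[\alpha^{h^i}]f$ taken along the $h$-orbit of $\alpha$, and passing from $\alpha$ to $\beta=\alpha^{h^j}$ merely rotates this product cyclically by $j$ positions. Conjugating by the length-$j$ prefix $y=\prod_{i=0}^{j-1}[\alpha^{h^i}]f$ should peel these first $j$ factors off the front and reattach them at the back, which is precisely the required rotation.

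First I would dispose of the degenerate case in Definition~\ref{Definition-Wreath-Cycle}(\ref{def:terr}) where $h$ acts trivially and $\vert\terr(w)\vert=1$: here $\vert h\vert=1$, the only admissible point is $\alpha=\beta$, so $j=0$, the element $y$ is the empty product $1_K$, and both sides equal $1_K$. In the remaining case $h$ induces a single cycle of length $\vert h\vert$ on $\terr(w)=\supp(h)$, so $\alpha$ and $\beta$ lie in a common $h$-orbit and a suitable $j\in\zset{\vert h\vert-1}$ exists and is unique. I would abbreviate $a_k\coloneqq[\alpha^{h^k}]f$ and record the periodicity $a_{k+\vert h\vert}=a_k$, which holds because $h^{\vert h\vert}=1_H$ and hence $\alpha^{h^{k+\vert h\vert}}=\alpha^{h^k}$. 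With this notation $[\alpha,w]\Yade=a_0a_1\cdots a_{\vert h\vert-1}$, while reindexing via $\beta^{h^i}=\alpha^{h^{j+i}}$ together with the periodicity gives $[\beta,w]\Yade=a_ja_{j+1}\cdots a_{\vert h\vert-1}\,a_0a_1\cdots a_{j-1}$.

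The computation itself is then a telescoping cancellation: writing $[\alpha,w]\Yade=y\cdot(a_j\cdots a_{\vert h\vert-1})$ with $y=a_0\cdots a_{j-1}$, and using the convention $x^{y}=y^{-1}xy$, the factor $y^{-1}$ cancels the first $j$ terms to leave $a_j\cdots a_{\vert h\vert-1}$, and right-multiplying by $y$ appends $a_0\cdots a_{j-1}$, yielding exactly $[\beta,w]\Yade$.

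I do not expect a genuine obstacle here; the work is essentially bookkeeping. The only points demanding care are the index arithmetic modulo $\vert h\vert$ together with the periodicity of the $a_k$ (so that the rotated product is literally the yade at $\beta$), fixing the conjugation convention and the right action consistently throughout, and noting at the outset that a valid exponent $j$ always exists, so that the first assertion is covered for every pair $\alpha,\beta\in\terr(w)$.
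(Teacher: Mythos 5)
Your proposal is correct and follows essentially the same route as the paper's proof: dispose of the trivial case $h=1_H$, then observe that conjugating $[\alpha,w]\Yade$ by the prefix $y=\prod_{i=0}^{j-1}[\alpha^{h^i}]f$ cyclically rotates the product of base components along the $h$-orbit, which is exactly $[\beta,w]\Yade$. You merely spell out the ``straightforward computation'' that the paper leaves implicit, with the same conjugator $y$.
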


\begin{proof}
As $w$ is a wreath cycle we prove this statement by distinguishing two
cases according to part $3$ of Definition \ref{Definition-Wreath-Cycle}:
Either $h=1_H$ and $\terr(w)$
is a singleton or $h$ induces a single
non-trivial cycle on $\Gamma$ and $\terr(w)=\supp(h)$.\\
First, consider the case of $h=1_H$. Then $\alpha=\beta$ since $\terr(w)$
is a singleton and the claim becomes trivial. Now consider the case of $h$
being a non-trivial cycle.
Let \(\alpha, \beta \in \supp(h)\). Then we have 
\[a \coloneqq [\alpha,w]\Yade = \prod \limits_{i=0}^{\vert h \vert -1}
[\alpha]f^{h^{-i}} = \prod \limits_{i=0}^{\vert h \vert -1} [\alpha^{h^{i}}]f\]
and
\[ b \coloneqq [\beta,w]\Yade = \prod \limits_{i=0}^{\vert h \vert -1} 
[\beta]f^{h^{-i}} = \prod \limits_{i=0}^{\vert h \vert -1}
[\beta^{h^i}]f.\]
Since \(h\) is a cycle there is a \(j \in \zset{\vert h \vert -1}\) such 
that \(\alpha^{h^j} = \beta\). A straight forward computation shows that for 
$y \coloneqq \prod_{i=0}^{j -1} \left[ \alpha^{h^i}\right]f$ we
have $b = a^y$ thus \(a\) and \(b\) are conjugate in \(K\) and the claim follows.
\end{proof}
The previous lemma justifies the following definition.
\begin{definition}
Let $w=(f,h)\in S$ be a wreath cycle and let $\alpha \in
\terr(w)$ be fixed. We call the conjugacy class
$\left([\alpha,w]\Yade\right)^K$ of $[\alpha,w]\Yade$ in
$K$ the \emph{Yade-class of $w$}.
\end{definition}
The following example shows how to compute the Yade-map.
\begin{example}
\label{Example-Yade}
Let $K \coloneqq \sym(\{1, \dots, 4\})$,
$\Gamma \coloneqq \{1,\dots, 8\}$ and
$S \coloneqq K \wr \sym(\Gamma)$.
Consider the wreath cycles from Example \ref{Example-Wreath-Product-Territory}:
\[
\begin{array}{r*{10}{c}}
v \coloneqq \bigl(
& \overset{\highlightZ{1}}{()},
& \overset{\highlightZ{2}}{()},
& \overset{\highlightZ{3}}{()},
& \overset{\highlightZ{4}}{()},
& \overset{\highlightZ{5}}{()},
& \overset{\highlightZ{6}}{()},
& \overset{\highlightC{7}}{(1,2)},
& \overset{\highlightZ{8}}{()};
& \overset{\text{top}}{()}
&\bigr), \\
u \coloneqq (e, g) = \bigl(
& \overset{\highlightB{1}}{(1,2)(3,4)},
& \overset{\highlightB{2}}{(3,4)},
& \overset{\highlightZ{3}}{()},
& \overset{\highlightZ{4}}{()},
& \overset{\highlightZ{5}}{()},
& \overset{\highlightZ{6}}{()},
& \overset{\highlightZ{7}}{()},
& \overset{\highlightZ{8}}{()};
& \overset{\text{top}}{(\highlightB{1,2})}
&\bigr).
\end{array}
\]
We compute the Yade of $u$ in $1$:
\[
	[1, u]\Yade = \prod_{i=0}^{\vert g
	\vert -1} \left[1^{g^i}\right]e = [1]e \cdot [2]e = (1,2)(3,4) \cdot (3,4) = (1,2).
\]
Then
\[
\begin{array}{l*{9}{c}}
[-,v]\Yade \coloneqq \bigl(
& \overset{\highlightZ{1}}{()},
& \overset{\highlightZ{2}}{()},
& \overset{\highlightZ{3}}{()},
& \overset{\highlightZ{4}}{()},
& \overset{\highlightZ{5}}{()},
& \overset{\highlightZ{6}}{()},
& \overset{\highlightC{7}}{(1,2)},
& \overset{\highlightZ{8}}{()}
&\bigr), \\[0pt]
[-,u]\Yade \coloneqq \bigl(
& \overset{\highlightB{1}}{(1,2)},
& \overset{\highlightB{2}}{(1,2)},
& \overset{\highlightZ{3}}{()},
& \overset{\highlightZ{4}}{()},
& \overset{\highlightZ{5}}{()},
& \overset{\highlightZ{6}}{()},
& \overset{\highlightZ{7}}{()},
& \overset{\highlightZ{8}}{()}
&\bigr).
\end{array}
\]
\end{example}
The following corollary shows how to compute the order of a wreath cycle as a
product of the order of its top component and of an element in $K$. This
and the following lemma are stated in Ore, see \cite[Theorem 4]{Ore}. Note
that by $\vert w\vert$ we denote the order of $w\in W$.
\begin{cor}\label{detordercor}
Let $w=(f,h)\in S$ be a wreath cycle.
Then for any $\gamma \in \terr(w)$ we have 
\[\lvert w \rvert = \bigl\vert[-,w]\Yade\bigr\vert \cdot \vert h\vert =\bigl\vert
[\gamma,w]\Yade \bigr\vert \cdot \vert h\vert,\]
where $\bigl\vert[-,w]\Yade\bigr\vert$ is the order of $[-,w]\Yade$ as an
element of $K^\Gamma$.
\end{cor}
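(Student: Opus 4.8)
The plan is to compute the order of a wreath cycle $w = (f,h)$ directly, by raising $w$ to successive powers and reading off when the identity is first reached. The key observation is that the top component $h$ of a wreath cycle has order $\vert h \vert$, so any power of $w$ that is the identity must be a multiple of $\vert h \vert$; hence I only need to examine the powers $w^{k\vert h \vert}$ for $k \geq 1$ and find the smallest $k$ for which $w^{k \vert h\vert} = 1_W$.

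\medskip

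First I would compute $w^{\vert h \vert}$ explicitly. Writing $w = (f,h)$ and using the multiplication rule $(f,h)(e,g) = (f\cdot e^{h^{-1}}, hg)$, an easy induction shows that $w^{m} = \bigl(\prod_{i=0}^{m-1} f^{h^{-i}},\, h^m\bigr)$ for every $m \geq 1$. Setting $m = \vert h\vert$ gives $h^{\vert h\vert} = 1_H$, so the top component becomes trivial and the base component of $w^{\vert h\vert}$ is $\prod_{i=0}^{\vert h\vert -1} f^{h^{-i}}$. Evaluating this base component at a point $\gamma$ yields exactly $[\gamma]\bigl(\prod_{i=0}^{\vert h\vert -1} f^{h^{-i}}\bigr) = \prod_{i=0}^{\vert h\vert -1}[\gamma]f^{h^{-i}} = [\gamma, w]\Yade$, by Definition \ref{def:yade}. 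Thus $w^{\vert h\vert} = ([-,w]\Yade,\, 1_H)$, where the base component is precisely the yade map viewed as an element of $K^\Gamma$.

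\medskip

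Since $w^{\vert h\vert}$ has trivial top component, it lies in the base group, and its order as a wreath product element equals the order of its base component $[-,w]\Yade$ in $K^\Gamma$. Because $\vert h\vert$ divides $\vert w\vert$ and $w^{\vert h\vert}$ generates a cyclic group of order $\vert w\vert / \vert h\vert$, I conclude $\vert w\vert = \bigl\vert[-,w]\Yade\bigr\vert \cdot \vert h\vert$. Finally, the equality $\bigl\vert[-,w]\Yade\bigr\vert = \bigl\vert[\gamma,w]\Yade\bigr\vert$ for any $\gamma \in \terr(w)$ follows from Lemma \ref{conjugateDet}: the values $[\alpha,w]\Yade$ for $\alpha \in \terr(w)$ are all conjugate in $K$ and hence have equal order, while outside $\terr(w)$ the yade is $1_K$ and contributes nothing to the order of the element $[-,w]\Yade$ of $K^\Gamma$.

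\medskip

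The main obstacle, though it is largely bookkeeping, is verifying the inductive formula for $w^m$ and carefully tracking the exponents on the conjugating permutation so that the product $\prod_{i=0}^{\vert h\vert-1} f^{h^{-i}}$ evaluated at $\gamma$ matches the yade convention $\prod_{i=0}^{\vert h\vert-1}[\gamma^{h^i}]f$ rather than its inverse ordering; this is exactly the index manipulation already carried out in Definition \ref{def:yade}, so it should cause no real difficulty.
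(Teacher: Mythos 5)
Your proposal is correct and follows essentially the same route as the paper: both establish $w^{\vert h\vert}=([-,w]\Yade,1_H)$, deduce $\vert w\vert=\bigl\vert[-,w]\Yade\bigr\vert\cdot\vert h\vert$, and then invoke Lemma \ref{conjugateDet} together with pointwise multiplication in $K^\Gamma$ to identify $\bigl\vert[-,w]\Yade\bigr\vert$ with $\bigl\vert[\gamma,w]\Yade\bigr\vert$. You merely spell out the inductive power formula that the paper leaves implicit.
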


\begin{proof}
By the definition of the Yade-map and the multiplication rule in wreath
products we have $w^{\vert h\vert}=([-,w]\Yade, 1_H)$,
thus $\vert w \vert = \bigl\vert[-,w]\Yade\bigr\vert \cdot \vert h\vert	$.
It remains to show that for any $\gamma \in \terr(w)$ we have
$\bigl\vert[-,w]\Yade\bigr\vert =\bigl\vert[\gamma,w]\Yade\bigr\vert$.
This follows immediately
by the pointwise multiplication in $K^\Gamma$ and the fact that
the non-trivial images of $[-,w]\Yade$ are conjugate in $K$ by
Lemma~\ref{conjugateDet} and thus all have the same order in $K$.
\end{proof}

We can generalize this observation to arbitrary wreath product elements.
This can be seen as an analogue to computing orders of permutations given
in disjoint cycle decomposition. Recall that $\operatorname{LCM}$ denotes
the least common multiple.
\begin{lemma}\label{orderlemma}
Let $w=w_1\dotsm w_\ell \in W$ be a wreath cycle decomposition of $w$ 
with $w_i=(f_i,h_i)\in S$.
Then \[\vert w\vert =\operatorname{LCM}\left(\vert w_1\vert , \dots,\, \vert
w_\ell\vert \right)= \operatorname{LCM}\left(\bigl\vert [-,w_1]\Yade \bigr\vert \cdot \vert
h_1\vert,\dots,\, \bigl\vert[-,w_\ell]\Yade\bigr\vert \cdot \vert h_\ell\vert\right).\]
\end{lemma}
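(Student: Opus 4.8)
The second equality is immediate: applying Corollary \ref{detordercor} to each wreath cycle $w_i$ gives $\vert w_i\vert = \bigl\vert[-,w_i]\Yade\bigr\vert\cdot\vert h_i\vert$, so the two lists appearing inside the $\operatorname{LCM}$ agree termwise. The whole proof therefore reduces to the first equality $\vert w\vert = \operatorname{LCM}(\vert w_1\vert,\dots,\vert w_\ell\vert)$, which I would establish by two divisibility arguments, exploiting that the factors $w_i$ are pairwise disjoint and hence pairwise commute by Lemma \ref{disjointElemCommute}.

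For the easy direction, set $m \coloneqq \operatorname{LCM}(\vert w_1\vert,\dots,\vert w_\ell\vert)$. Since the $w_i$ commute, $w^m = \prod_{i=1}^\ell w_i^m = 1_W$, because each $\vert w_i\vert$ divides $m$; hence $\vert w\vert$ divides $m$.

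For the reverse direction, let $n \coloneqq \vert w\vert$, so that $\prod_{i=1}^\ell w_i^n = w^n = 1_W$, again using commutativity. The plan is to show that this forces $w_i^n = 1_S$ for every $i$, which yields $\vert w_i\vert \mid n$ for all $i$ and therefore $m \mid n$. This rests on two observations. First, the powers $w_i^n$ are again pairwise disjoint: the territory of a wreath cycle $w_i=(f_i,h_i)$ is invariant under $h_i$ (it is a single $h_i$-fixed point when $h_i=1_H$, and equals $\supp(h_i)$ otherwise), so one checks $\terr(w_i^n)\subseteq\terr(w_i)$ and disjointness is inherited. Second, a product of pairwise disjoint elements equal to $1_W$ must have each factor equal to the identity: writing $w_i^n=(f_i',h_i')$, the top component of the product is $h_1'\cdots h_\ell'$, a product of permutations with pairwise disjoint supports, so it is trivial only if each $h_i'=1_H$; once all top components vanish the product collapses to $\bigl(\prod_i f_i',\,1_H\bigr)$ with the $f_i'$ supported on pairwise disjoint sets, forcing each $f_i'=1_{K^\Gamma}$ and hence $w_i^n=1_S$.

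The only genuine obstacle is the first observation, that passing to powers preserves disjointness. For a \emph{general} wreath product element the support of the base component can spread under the twisting action of the top component, so this step really does use that a wreath cycle has $h_i$-invariant territory; this is the one place where the special structure of the factors in a wreath cycle decomposition, rather than mere commutativity, is needed. Everything else is the elementary fact that commuting elements whose product has order $n$ satisfy the two divisibilities above, exactly as for disjoint cycles of a permutation.
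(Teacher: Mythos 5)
Your proof is correct and follows the same route as the paper: reduce to the first equality via Corollary \ref{detordercor} and deduce it from pairwise commutativity of the disjoint factors (Lemma \ref{disjointElemCommute}). In fact you are more careful than the paper, whose proof simply asserts that commutativity gives $\vert w\vert =\operatorname{LCM}(\vert w_1\vert,\dots,\vert w_\ell\vert)$; you rightly observe that commutativity alone only yields $\vert w\vert \mid \operatorname{LCM}$, and that the reverse divisibility needs disjointness to be preserved under taking powers (which holds because the territory of a wreath cycle is invariant under its top component), so that $w^n=1_W$ forces each $w_i^n=1_S$.
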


\begin{proof}
The elements $w_i$ and $w_j$ commute pairwise for
all $1\leq i,\, j \leq \ell$ and therefore $|w|=\operatorname{LCM}(\vert w_1\vert,\dots, \vert
w_\ell\vert)$. The result follows by Corollary~\ref{detordercor}.
\end{proof}
We now turn our attention to the conjugacy problem in wreath products and first
consider the case of wreath cycles. It turns out that a generalization of
the length of a cycle, which we call the load, will be very useful. Recall
that by Lemma \ref{conjugateDet} we know that $[\alpha,w]\Yade$ and
$[\beta,w]\Yade$ are conjugate in $K$ for all $\alpha,\, \beta \in \terr(w)$.
\begin{definition}
Let $w = (f,h) \in W$ be a wreath cycle and let $\alpha \in \terr(w)$. 
We define the  \emph{load} of $w$ as
the tuple \[\operatorname{ld}(w)\coloneqq 
\left(\, \left([\alpha,w]\Yade\right)^K,\vert h\vert \right).\]
\end{definition}
We now prove that the load of a wreath cycle is invariant under
conjugation in $S$.
\begin{lemma}\label{conjugateInWreath}
Let \(w = (f,h) \in S\) be a wreath cycle.
For every \(a = (s,t)\in S\) the conjugate \(a^{-1}wa = w^a\) is a 
wreath cycle such that for each $\gamma \in \Gamma$ we have
\[
\left[\gamma\!,\,w^a\right]\Yade = \left[\gamma^{t^{-1}}\right]
s^{-1}\cdot \left[\gamma^{t^{-1}}\!,\,w\right]\Yade \cdot \left[\gamma^{t^{-1}}\right]s.\]
In particular, $\ld(w)=\ld(w^a)$.
\end{lemma}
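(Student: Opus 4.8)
The plan is to prove the statement in two stages: first show that $w^a$ is again a wreath cycle, and then verify the explicit formula for its yade, from which the invariance of the load follows immediately. For the first stage I would write $w = (f,h)$ and $a = (s,t)$ and compute the conjugate directly using the multiplication rule. The top component of $w^a$ is $h^t = t^{-1}ht$, which has the same cycle type as $h$: if $h$ is a single non-trivial cycle then so is $h^t$, and if $h = 1_H$ then $h^t = 1_H$. I would then check that $\terr(w^a) = \terr(w)^t = \{\gamma^t : \gamma \in \terr(w)\}$, since conjugation by $t$ relabels the points of $\Gamma$ and the base component gets permuted accordingly. Matching the two defining cases of a wreath cycle in Definition \ref{Definition-Wreath-Cycle}(\ref{def:terr}) against $h^t$ confirms that $w^a$ is a wreath cycle.

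The heart of the proof is the yade formula. I would first obtain an explicit expression for $w^a = a^{-1}wa$. Writing $a^{-1} = (s^{-1},t^{-1})^{\text{(inverse)}}$ and applying the multiplication rule twice gives $w^a = (e,h^t)$ where the base component $e$ satisfies, for each $\gamma \in \Gamma$, a formula of the shape $[\gamma]e = [\gamma^t]s^{-1} \cdot [\gamma^t]f \cdot [(\gamma^t)^h]s$ (the precise bookkeeping of which argument each factor is evaluated at is exactly the calculation I would carry out carefully, as the right-action conventions of Hypothesis \ref{setting} make the indices easy to transpose). The key structural point is that these correction factors \emph{telescope}: plugging $e$ into the defining product $[\gamma,w^a]\Yade = \prod_{i=0}^{|h^t|-1}[\gamma^{(h^t)^i}]e$, the trailing $s$-factor of the $i$-th term cancels against the leading $s^{-1}$-factor of the $(i+1)$-th term, because $(\gamma^t)^{h^{i+1}}$ matches up correctly. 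After the telescoping collapse only the outermost $s^{-1}$ and $s$ survive, conjugating the product $\prod_i [(\gamma^t)^{h^i}]f = [\gamma^{t^{-1}}, w]\Yade$ evaluated at the right point; reconciling this with the claimed formula $[\gamma^{t^{-1}}]s^{-1}\cdot[\gamma^{t^{-1}},w]\Yade\cdot[\gamma^{t^{-1}}]s$ is the remaining bit of index-chasing.

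For the final sentence, I would argue that $\ld(w) = \ld(w^a)$ follows at once. The top-component part of the load is $|h| = |h^t|$ since conjugate permutations have equal order. For the Yade-class part, the displayed formula exhibits $[\gamma,w^a]\Yade$ as a $K$-conjugate of $[\gamma^{t^{-1}},w]\Yade$; since $\gamma^{t^{-1}} \in \terr(w)$ whenever $\gamma \in \terr(w^a)$, Lemma \ref{conjugateDet} tells us the latter lies in the Yade-class of $w$, so $w$ and $w^a$ share the same Yade-class, and hence the same load.

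\medskip

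The step I expect to be the main obstacle is the telescoping computation of the base component and its yade, precisely because the faithful right-action conventions force one to track whether a given factor is evaluated at $\gamma^t$, $\gamma^{t^{-1}}$, or some $h$-shifted variant thereof; a single misplaced exponent breaks the cancellation. Everything else — the cycle-type preservation, the order equality, and the appeal to Lemma \ref{conjugateDet} — is routine once the formula is in hand.
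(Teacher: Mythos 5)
Your proposal follows the paper's proof essentially verbatim: compute $w^a = \left(\left(s^{-1}\right)^{t} f^t s^{h^{-1} t},\, h^t\right)$, check the wreath-cycle property by cases on $h = 1_H$ versus $h$ a nontrivial cycle, and obtain the yade formula by exactly the telescoping cancellation of the $s$-factors you describe, from which $\ld(w)=\ld(w^a)$ follows. The only point requiring care is the one you flag yourself: with the paper's right-action conventions the base component reads $[\gamma]e = \left[\gamma^{t^{-1}}\right]s^{-1}\cdot\left[\gamma^{t^{-1}}\right]f\cdot\left[\gamma^{t^{-1}h}\right]s$ (evaluated at $\gamma^{t^{-1}}$, not $\gamma^{t}$), and with that correction your telescoping argument closes exactly as in the paper.
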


\begin{proof}
Observe that
\[
w^a = (f,h)^{(s,t)} = \left(\left(s^{-1}\right)^{t} f^t s^{h^{-1} t}\!,\, t^{-1} h t \right).
\]
We first show that $w^a$ is a wreath cycle. For this, first consider
the case of $h = 1_H$. Then
\[
w^a = \left((f^s)^t, 1_H \right).
\]
Thus the top component of $w^a$ is trivial. Since
for every $\gamma \in \Gamma$ we have
\[
[\gamma](f^s)^t =
\left[\gamma^{t^{-1}}\right]s^{-1}\cdot 
\left[\gamma^{t^{-1}}\right]f\cdot \left[\gamma^{t^{-1}}\right]s
\]
and as $w$ is a wreath cycle with trivial top component, 
$w$ has exactly one non-trivial base component and we obtain $|\terr(w^a)| = 1$.\\
Now let us consider $h \neq 1_H$.
The top component $h$ is a non-trivial cycle and hence $h^t$ is a non-trivial cycle.
Further for any $\gamma \notin \supp(h^t)$ we know $\gamma^{t^{-1}} \notin \supp(h)$ and $\gamma^{t^{-1} h} = \gamma^{t^{-1}}$.
Thus we have
\[[\gamma]\left(\left(s^{-1}\right)^{t} f^t s^{h^{-1} t} \right) =
\left[\gamma^{t^{-1}}\right]s^{-1}\cdot 
\left[\gamma^{t^{-1}}\right]f\cdot \left[\gamma^{t^{-1}}\right]s = 1_K,\]
and thus $\terr(w^a) = \supp(h^t)$. The claimed identity for
$\left[\gamma,w^a\right]\Yade$ follows from the
following calculation:
\begin{align*}
\left[\gamma,w^a\right]\Yade
&= \prod_{i=0}^{\vert h^t \vert - 1} [\gamma] 
\left(\left(s^{-1}\right)^{t} f^t s^{h^{-1} t}\right)^{(h^t)^{-i}} = 
\prod_{i=0}^{\vert h \vert - 1}
[\gamma] \left(s^{-1} f s^{h^{-1}}\right)^{t t^{-1} h^{-i} t} \\
&= \prod_{i=0}^{\vert h \vert - 1} 
\left[\gamma^{t^{-1}}\right] \left(\left(s^{-1}\right)^{h^{-i}} f^{h^{-i}} 
s^{h^{-i-1}}\right) = \left[\gamma^{t^{-1}}\right]s^{-1}\cdot\left[\gamma^{t^
{-1}}\!,\,w\right]\Yade \cdot \left[\gamma^{t^{-1}}\right]s.
\end{align*}
\end{proof}

One can now derive the following corollary relating the territories of
conjugate wreath cycles.
\begin{cor}\label{cor:conjugate_terr}
Let $w=(f,h) \in S$ be a wreath cycle and $a=(s,t)\in S$.
Then \[\terr(w^a)=\terr(w)^t.\]
\end{cor}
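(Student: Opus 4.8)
The plan is to split along the two types of wreath cycle from Definition \ref{Definition-Wreath-Cycle}(\ref{def:terr}) and to reuse the computations already carried out in the proof of Lemma \ref{conjugateInWreath}. The single external ingredient is the elementary fact that conjugation transports supports, namely $\supp(h^t) = \supp(t^{-1}ht) = \supp(h)^t$, which holds because $t \in \sym(\Gamma)$ is a bijection of $\Gamma$ and $\gamma \in \supp(h)$ if and only if $\gamma^t \in \supp(h^t)$.

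First I would treat the case where $h$ is a non-trivial cycle. Here the defining property of a wreath cycle gives $\terr(w) = \supp(h)$, and the proof of Lemma \ref{conjugateInWreath} already established $\terr(w^a) = \supp(h^t)$. Combining these two identities with the support-transport identity yields
\[
\terr(w^a) = \supp(h^t) = \supp(h)^t = \terr(w)^t
\]
immediately.

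Next I would handle the case $h = 1_H$, where $\terr(w)$ is a singleton $\{\gamma_0\}$ and $\gamma_0$ is the unique point satisfying $[\gamma_0]f \neq 1_K$. By Lemma \ref{conjugateInWreath} we have $w^a = \left((f^s)^t, 1_H\right)$ together with the explicit formula
\[
[\gamma](f^s)^t = \left[\gamma^{t^{-1}}\right]s^{-1} \cdot \left[\gamma^{t^{-1}}\right]f \cdot \left[\gamma^{t^{-1}}\right]s.
\]
Since conjugation in $K$ preserves (non-)triviality, this component is non-trivial exactly when $\left[\gamma^{t^{-1}}\right]f \neq 1_K$, that is, when $\gamma^{t^{-1}} = \gamma_0$, equivalently $\gamma = \gamma_0^t$. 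Hence $\terr(w^a) = \{\gamma_0^t\} = \terr(w)^t$.

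I do not expect a genuine obstacle here: once Lemma \ref{conjugateInWreath} is in hand, the statement is essentially a bookkeeping consequence. The only points requiring care are to invoke the support-transport identity correctly in the first case and, in the second, to note that conjugation by $\left[\gamma^{t^{-1}}\right]s$ in $K$ can neither render a non-trivial element trivial nor the reverse, so that the unique non-trivial base component is relocated from $\gamma_0$ to $\gamma_0^t$ and nowhere else.
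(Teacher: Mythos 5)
Your proof is correct and follows essentially the same route as the paper: for $h\neq 1_H$ it combines $\terr(w^a)=\supp(h^t)$ from the proof of Lemma \ref{conjugateInWreath} with $\supp(h^t)=\supp(h)^t$, and for $h=1_H$ it reads off from that same proof that the unique non-trivial base component is relocated from $\gamma_0$ to $\gamma_0^t$. The only difference is that you spell out the $h=1_H$ case in more detail than the paper, which simply cites the proof of Lemma \ref{conjugateInWreath}.
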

\begin{proof}
The claim follows from the proof of Lemma \ref{conjugateInWreath} if $h=1_H$.
If $h\neq 1_H$, the claim follows because $w$ and $w^a$ are wreath cycles
and hence
\[\terr(w^a) = \supp(h^t) = \supp(h)^t = \terr(w)^t.\qedhere\]
\end{proof}

Next we show the converse of Lemma \ref{conjugateInWreath}, i.e.\ that two wreath cycles with
the same load are always conjugate in $S$. In order to do so, we need the
following lemma, which is \cite[Theorem~2]{Ore} (we repeat the proof as we
require the constructed elements).
\begin{lemma}\label{le:conjugateCs}
Let \(K\) be a group, $\ell\in \mathbb{Z}_{>0}$,
\(a_0, a_1,\ldots, a_{\ell-1}, b_0,b_1,\ldots, b_{\ell-1}\in K\)
and set
\[a \coloneqq a_0a_1\dotsm a_{\ell-1}\text{ and } b \coloneqq b_
0b_1\dotsm b_{\ell-1}.\]
Then there exist \(c_0,\dotsc, c_\ell \in K\) where \(c_0
= c_\ell\) such that \(b_i = c_i^{-1}a_ic_{i+1}\) for every \(i \in \zset{\ell-1}\) 
if and only if \(a\) and \(b\) are conjugate in \(K\).
\end{lemma}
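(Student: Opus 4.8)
The plan is to prove both directions of the equivalence separately, handling the ``if'' direction by a direct telescoping computation and the ``only if'' direction by an explicit recursive construction of the $c_i$. First I would dispatch the easy direction: suppose $c_0, \dotsc, c_\ell$ with $c_0 = c_\ell$ satisfy $b_i = c_i^{-1} a_i c_{i+1}$ for all $i \in \zset{\ell-1}$. Substituting these into the product $b = b_0 \dotsm b_{\ell-1}$ and observing that the factors $c_{i+1} c_{i+1}^{-1}$ arising between consecutive terms cancel, the product telescopes to $b = c_0^{-1} a_0 \dotsm a_{\ell-1} c_\ell = c_0^{-1} a c_\ell$. Since $c_0 = c_\ell$, this yields $b = a^{c_0}$, so $a$ and $b$ are conjugate in $K$.

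For the converse, suppose $b = a^c$ for some $c \in K$. The key idea is to read the relations $b_i = c_i^{-1} a_i c_{i+1}$ as a recursion: setting $c_0 \coloneqq c$ and defining $c_{i+1} \coloneqq a_i^{-1} c_i b_i$ for $i \in \zset{\ell-1}$ makes every one of these interior relations hold by construction. The only remaining obligation is then the boundary condition $c_0 = c_\ell$. Unrolling the recursion gives the closed form $c_\ell = (a_0 \dotsm a_{\ell-1})^{-1} c_0 (b_0 \dotsm b_{\ell-1}) = a^{-1} c b$, and substituting $b = c^{-1} a c$ produces $c_\ell = a^{-1} c c^{-1} a c = c = c_0$, exactly as required.

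The only slightly subtle point, and not really an obstacle, is recognizing that the whole lemma reduces to this telescoping/recursion phenomenon: once the $c_i$ are chosen so that the interior relations hold automatically, the closure condition $c_0 = c_\ell$ becomes \emph{precisely} equivalent to the conjugacy $b = a^c$. Both computations are short, so I expect no real difficulty beyond carefully tracking the recursion and confirming the cancellation in the telescoped product. I would state the unrolled form of $c_\ell$ explicitly, since it makes both the cancellation and the role of the conjugacy hypothesis fully transparent.
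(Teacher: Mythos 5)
Your proof is correct and follows essentially the same route as the paper: the telescoping cancellation for one direction, and for the other the recursion $c_{i+1} = a_i^{-1} c_i b_i$ with $c_0 = c$, whose unrolled closed form $c_i = a_{i-1}^{-1}\dotsm a_0^{-1}c_0 b_0\dotsm b_{i-1}$ is exactly the formula the paper writes down. Your explicit verification of the boundary condition $c_\ell = c_0$ spells out what the paper leaves as ``a short calculation.''
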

\begin{proof}
Let \(a\) and \(b\) be conjugate in \(K\) and \(c_0 \in
K\) such that \(a^{c_0} = b\), in particular
$(a_0a_1\cdots a_{\ell-1})^{c_0} = b_0b_1\cdots \!b_{\ell-1}.$
Define
$c_i \coloneqq a_{i-1}^{-1}a_{i-2}^{-1}\dotsm a_0^{-1}c_0b_0
b_1\dotsm b_{i-1} $,
then a short calculation shows that \(c_i\) has the desired property.
Conversely, if such \(c_0,\dotsc, c_\ell \in K\) exist, we have
\((a_0a_1\dotsm a_{\ell-1})^{c_0} = b_0b_1\dotsm b_{\ell-1}\) and thus 
\(a^{c_0} = b\).
\end{proof}
This enables us to show the following lemma, which is \cite[Theorem 6]{Ore}
restated in our notation.
\begin{lemma}\label{lem:load_invariant}

Let $w$, $v\in S$ be wreath cycles.
Then $w$ and $v$ are conjugate in $S$ if and only if they have
the same load. 
\end{lemma}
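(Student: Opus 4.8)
The forward implication is immediate from Lemma~\ref{conjugateInWreath}: if $v = w^a$ for some $a \in S$, then $\ld(v) = \ld(w^a) = \ld(w)$. The substance of the lemma is therefore the converse, so I assume $w = (f,h)$ and $v = (e,g)$ are wreath cycles with $\ld(w) = \ld(v)$; in particular $|h| = |g|$ and the Yade-classes of $w$ and $v$ coincide. I would build a conjugating element $a = (s,t) \in S$ in two stages: first use a permutation $t$ to match the top components, then use a base element $s \in K^\Gamma$ to match the base components.

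For the first stage, I choose $t \in \sym(\Gamma)$ with $h^t = g$. When $h$ and $g$ are non-trivial cycles this is possible because they have equal length $|h| = |g|$; when $h = g = 1_H$ the condition is vacuous and I additionally pick $t$ so that it maps the single territory point of $w$ onto that of $v$. Setting $w' \coloneqq w^{(1_{K^\Gamma}, t)} = (f', g)$ with $f' = f^t$, Lemma~\ref{conjugateInWreath} gives $\ld(w') = \ld(w) = \ld(v)$, while Corollary~\ref{cor:conjugate_terr} together with the choice of $t$ yields $\terr(w') = \terr(v)$. It remains to find $s$ with $(w')^{(s,1_H)} = v$, and the conjugation formula of Lemma~\ref{conjugateInWreath} reduces this (with trivial top component there) to the pointwise system
\[
[\gamma]s^{-1} \cdot [\gamma]f' \cdot [\gamma^g]s = [\gamma]e \qquad (\gamma \in \Gamma).
\]
Off $\terr(v) = \terr(w')$ both $f'$ and $e$ are trivial and $\gamma^g = \gamma$, so setting $[\gamma]s = 1_K$ there satisfies the equation trivially; the real content sits on the single $g$-orbit forming $\terr(v)$.

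For the second stage I enumerate that orbit as $\gamma_0, \gamma_1, \dots, \gamma_{n-1}$ with $\gamma_{i+1} = \gamma_i^g$ (indices modulo $n = |g|$) and write $a_i \coloneqq [\gamma_i]f'$, $b_i \coloneqq [\gamma_i]e$, $c_i \coloneqq [\gamma_i]s$. The system above becomes exactly
\[
b_i = c_i^{-1} a_i c_{i+1} \qquad (i \in \zset{n-1}),
\]
which is the hypothesis of Lemma~\ref{le:conjugateCs}. The decisive observation is that the products $a_0 a_1 \cdots a_{n-1}$ and $b_0 b_1 \cdots b_{n-1}$ are precisely the yades $[\gamma_0, w']\Yade$ and $[\gamma_0, v]\Yade$. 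Since $\ld(w') = \ld(v)$, their Yade-classes agree, and by Lemma~\ref{conjugateDet} each yade of a wreath cycle on its territory represents that class; hence $a_0 \cdots a_{n-1}$ and $b_0 \cdots b_{n-1}$ are conjugate in $K$. Lemma~\ref{le:conjugateCs} then furnishes $c_0, \dots, c_n \in K$ with $c_0 = c_n$ fulfilling the relations above, and defining $[\gamma_i]s \coloneqq c_i$ is consistent because $c_n = c_0$ closes the cycle. The product $a \coloneqq (1_{K^\Gamma}, t)(s, 1_H) \in S$ then conjugates $w$ to $v$.

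I expect the only real difficulty to be bookkeeping: lining up the cyclic indexing of the $g$-orbit with the index range $\{0, \dots, \ell\}$ of Lemma~\ref{le:conjugateCs} so that its boundary condition $c_0 = c_\ell$ corresponds to wrapping around the cycle, and checking carefully that the telescoping products defining $a$ and $b$ really are the yades in $\gamma_0$ and not a shifted variant. The degenerate case $h = 1_H$ is covered as the instance $n = 1$, where the lone relation $b_0 = c_0^{-1} a_0 c_0$ simply restates conjugacy of the two yades in $K$.
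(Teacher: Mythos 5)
Your proposal is correct and follows essentially the same route as the paper's proof: choose $t$ with $h^t=g$ (handling the trivial-top case separately), observe that the yades are products around the cycle lying in the same $K$-conjugacy class, and invoke Lemma~\ref{le:conjugateCs} to obtain the $c_i$ defining the base component $s$ on the orbit. The only difference is cosmetic — you factor the conjugator as $(1_{K^\Gamma},t)(s,1_H)$ instead of writing $(s,t)$ directly — and the bookkeeping you flag does line up as you expect.
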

\begin{proof}
The if direction is Lemma \ref{conjugateInWreath}. 
Now consider the wreath cycles $w=(f,h)$, $v=(e,g) \in S$ with the same load 
$\left(k^K,j\right)$.
We construct an element $a=(s,t) \in S$ with $w^a=v$.\\
First assume $j=1$. Then $\vert\terr(w)\vert = \vert\terr(v)\vert = 1$ and
for some 
$\gamma_0 \in \terr(w)$ there exists a $t\in \symg$ with $\gamma_0^t \in
\terr(v)$. As the Yade-classes of $w$ and $v$ agree there exists a $c\in K$
with 
\[c^{-1}\cdot[\gamma_0]f\cdot c =  c^{-1}\cdot[\gamma_0, w]\Yade\cdot c =
[\gamma_0^t,v]\operatorname{Yade} = [\gamma_0^t]e.\]
Using
\[
s: \Gamma\to K, \gamma\mapsto \begin{cases}
c,&\text{ if } \gamma=\gamma_0\\
1_K,& \text{ else }
\end{cases}
\]
we conclude $w^{(s,t)} = v$.\\
Now assume $j>1$. As the order of $h$ equals that of $g$ there exists a $t \in \symg$
with $h^t=g$. We continue
by constructing the base component $s$ of a conjugating element. 
Fix $\gamma_0 \in \supp(h)$. As
\[[\gamma_0,w]\Yade = \prod \limits_{i = 0}^{j-1}[\gamma_0]f^{h^{-i}}
\text{ and } \left[\gamma_0^t, v\right]\Yade = \prod
\limits_{i=0}^{j-1} [\gamma_0^t]e^{g^{-i}}\in K\]
are conjugate in $K$, by Lemma \ref{le:conjugateCs} there exist
$c_0,\dotsc,c_j \in K$ where $c_0=c_j$ such that
\[c_i^{-1}\cdot [\gamma_0]f^{h^{-i}}\cdot c_{i+1}=
\left[\gamma_0^t\right]e^{g^{-i}}.\]
Now define 
\[s\colon \Gamma \to K, \gamma \mapsto \begin{cases}
	1_K,&\text{ if } \gamma \not \in \terr(w),\\
	c_i, &\text{ if } \gamma=\gamma_0^{h^i} \text{ for } 0\leq i < j.
\end{cases}\]
We now show that $a=(s,t)$ has the desired property. First note 
\[
\terr(v) = \supp(g) = \supp(h)^t = \terr(w)^t	
\]
and
\[
w^a = (f,h)^{(s,t)} = \left((s^{-1})^{t}\cdot f^t\cdot s^{h^{-1}\cdot t},
t^{-1}\cdot h\cdot t \right)	
= \left((s^{-1})^{t}\cdot f^t\cdot s^{h^{-1}\cdot t}, g \right).
\]
Next we prove that the base component of $w^a$ equals that of $v$. Let
$\gamma\in \Gamma$ and observe  
\begin{align}\label{equation-gamma}
[\gamma]\left((s^{-1})^{t}\cdot f^t\cdot s^{h^{-1} \cdot t}\right) =
\left[\gamma^{t^{-1}}\right]
s^{-1}\cdot \left[\gamma^{t^{-1}}\right]f
\cdot \left[\gamma^{t^{-1}\cdot h}\right]s
\end{align}
We now distinguish two cases for $\gamma \in \Gamma$:
\begin{itemize}
\item $\gamma \not\in \terr(v)$: Using Corollary \ref{cor:conjugate_terr} we obtain
$\terr(w^a) = \terr(w)^t = \terr(v)$ and hence \[[\gamma]\left((s^{-1})^{t}\cdot f^t
s^{h^{-1}\cdot t}\right) = 1_K = [\gamma]e.\]
\item $\gamma \in \terr(v)=\supp(g)$: Then $\gamma^{t^{-1}} \in
\supp(h)$ and there exists an $i\in \{0,\dotsc, j -1\}$
with $\gamma^{t^{-1}}=\gamma_0^{h^i}$.
By Equation \ref{equation-gamma}
\begin{align*}
[\gamma]\left((s^{-1})^{t}\cdot f^t \cdot s^{h^{-1} t}\right) &= \left[\gamma^{t^{-1}}\right]
s^{-1}\cdot \left[\gamma^{t^{-1}}\right]f
\cdot \left[\gamma^{t^{-1}\cdot h}\right]s 
= c_{i}^{-1}\cdot \left[\gamma_0^{h^i}\right]f \cdot c_{i+1}\\
&= \left[\gamma_0^{t}\right]e^{g^{-i}}
= \left[\gamma_0^{t\cdot g^{i}}\right]e=
\left[\gamma_0^{t\cdot (t^{-1}\cdot h\cdot t)^i}\right]e
= \left[\gamma_0^{h^i\cdot t}\right]e = [\gamma]e
\end{align*}
which concludes the proof.\qedhere
\end{itemize}
\end{proof}

We generalise the above results to arbitrary wreath product elements. For
this, we need to introduce a few additional concepts.
\begin{definition}\label{def:Cset}
Let $w=(f,h) = w_1\cdots w_\ell \in W$ be an arbitrary wreath product
element in disjoint wreath cycle decomposition, i.e. the $w_i\in S$ are
disjoint wreath cycles. Define $\C(w) \coloneqq
\{w_1,\dots, w_\ell\}$ as the set of all wreath cycles in a disjoint wreath
cycle decomposition of $w$ and
\[\L(w) \coloneqq \left\{\, \ld(z) \,:\, z \in \C(w)\, \right\}
\text{ and for } L\in \L(w) \, \text{ set }
\C(L,w) \coloneqq \left\{\, z\in \C(w) \,:\, \ld(z)=L \right\}.\]
Further, set $\C^{*}(w) \coloneqq \{z=(e,g)\in \C(w) \,:\, g\neq 1_H \}$ as the set of
all wreath cycles of $w$ with non-trivial top component.
\end{definition}
Note that the above sets are well-defined as the cycles in a wreath cycle
decomposition are unique up to permutation. Moreover, one can now write
\begin{align}\label{notation:wr_decomp}
w=w_1\cdots w_\ell = \prod_{L\in \L(w)}\prod_{z\in \C(L,w)}z.
\end{align}
The disjoint cycle decomposition of a permutation induces a partition
of the underlying set by considering the support of each cycle. We extend
this concept and decompose the territory of each wreath product element in
accordance with the decomposition given in Equation \ref{notation:wr_decomp}.
\begin{definition}\label{def:territory_decomposition}
Let $w\in W$ and define the \emph{territory decomposition} of $w$ as
\[\mathcal{P}(w) =
\bigtimes_{L\in \L(w)}
\{\,\terr(z) \,:\, z\in \C(L,w)\}.
\]
\end{definition}
\begin{remark}\label{rem:A_mat}
Suppose that $K$ has finitely many conjugacy classes $k_1^K,\dots, k_r^K$
and let $w\in W$. Suppose that $A= \bigtimes_{L\in \L(w)}A_L$ is a set indexed by
$\L(w)$. In our examples and in analogy to the example by Specht in \cite{Specht}
we
illustrate $A$ as an $r\times \vert \Gamma \vert$-matrix
 $M$ via $M_{i,j} = A_L$, if $L=(k_i^K,j)\in
L(w)$ and $M_{i,j} = \emptyset$ else.
\end{remark}

\begin{example}
\label{Example-Wreath-Product-Territory-Partition}
Let $K \coloneqq \sym(\{1, \dots, 4\})$,
$\Gamma \coloneqq \{1,\dots, 8\}$ and
$S \coloneqq K \wr \sym(\Gamma)$.
We give an example for $\mathcal{P}(w)$ by considering the wreath cycle
decomposition of the element $w = (f,h) \in S$ from Example
\ref{Example-Wreath-Product-Wreath-Cycle-Decomp}.
For this we choose \[\mathcal{R}(K) \coloneqq
\{\,k_1 \coloneqq (),\, k_2 \coloneqq (1,2),\,k_3 \coloneqq (1,2)(3,4),\, k_4
\coloneqq (1,2,3),\, k_5 \coloneqq (1,2,3,4)\,\}\] as a set of representatives 
for the conjugacy classes of
$K$ and set $r \coloneqq \vert \mathcal{R}(K) \vert = 5$.
Recall the wreath cycle decomposition of $w$ given by the following four wreath cycles:
\[
\arraycolsep=0.5em\def\arraystretch{2.2}
\begin{array}{l*{10}{c}}
\highlightB{w_1} = \bigl(
& \overset{\highlightB{1}}{(1,2)(3,4)},
& \overset{\highlightB{2}}{(3,4)},
& \overset{\highlightZ{3}}{()},
& \overset{\highlightZ{4}}{()},
& \overset{\highlightZ{5}}{()},
& \overset{\highlightZ{6}}{()},
& \overset{\highlightZ{7}}{()},
& \overset{\highlightZ{8}}{()};
& \overset{\text{top}}{(\highlightB{1,2})}
& \bigr), \\
\highlightC{w_2} = \bigl(
& \overset{\highlightZ{1}}{()},
& \overset{\highlightZ{2}}{()},
& \overset{\highlightC{3}}{()},
& \overset{\highlightC{4}}{(1,2)},
& \overset{\highlightZ{5}}{()},
& \overset{\highlightZ{6}}{()},
& \overset{\highlightZ{7}}{()},
& \overset{\highlightZ{8}}{()};
& \overset{\text{top}}{(\highlightC{3,4})}
& \bigr), \\
\highlightD{w_3} = \bigl(
& \overset{\highlightZ{1}}{()},
& \overset{\highlightZ{2}}{()},
& \overset{\highlightZ{3}}{()},
& \overset{\highlightZ{4}}{()},
& \overset{\highlightD{5}}{(1,2,3)},
& \overset{\highlightD{6}}{()},
& \overset{\highlightZ{7}}{()},
& \overset{\highlightZ{8}}{()};
& \overset{\text{top}}{(\highlightD{5,6})}
& \bigr), \\
\highlightA{w_4} = \bigl(
& \overset{\highlightZ{1}}{()},
& \overset{\highlightZ{2}}{()},
& \overset{\highlightZ{3}}{()},
& \overset{\highlightZ{4}}{()},
& \overset{\highlightZ{5}}{()},
& \overset{\highlightZ{6}}{()},
& \overset{\highlightA{7}}{(1,2)},
& \overset{\highlightZ{8}}{()};
& \overset{\text{top}}{()}
& \bigr).
\end{array}
\]
The Yade classes for each of the $w_i$ are
\begin{align*}
[\highlightB{1}, \highlightB{w_1}]\Yade
&= (1,2)(3,4) \cdot (3,4) = (1,2) \in k_2^K, \\
[\highlightC{3}, \highlightC{w_2}]\Yade
&= () \cdot (1,2) = (1,2) \in k_2^K, \\
[\highlightD{5}, \highlightD{w_3}]\Yade
&= (1,2,3) \cdot () = (1,2,3) \in k_4^K, \\
[\highlightA{7}, \highlightA{w_4}]\Yade
&= (1,2) \in k_2^K.
\end{align*}
We write $\mathcal{P}(w)$ in matrix notation, where according to Remark
\ref{rem:A_mat} the entry in position $(i,j)$ is $\bigcup_{z\in
\C(w)}\{terr(z) \, : \, \ld(z)=(k_i^K,j)\}$. 
We omit the additional set braces for each entry
of $\P(w)$ and write $\zero$ if an entry of $\P(w)$ is the empty set, so 
\[
\mathcal{P}(w) =
\begin{blockarray}{*{9}{c}}
\scriptstyle \highlightZ{1} & \scriptstyle \highlightZ{2} & \scriptstyle \highlightZ{3} & \scriptstyle \highlightZ{4} & \scriptstyle \highlightZ{5} & \scriptstyle \highlightZ{6} & \scriptstyle \highlightZ{7} & \scriptstyle \highlightZ{8} & \\
\begin{block}{[*{8}{c}]>{\scriptstyle}r}
\zero & \zero & \zero & \zero & \zero & \zero & \zero & \zero & \highlightZ{k_1}\\
\{\highlightA{7}\} & \{\highlightB{1,2}\}, \{\highlightC{3,4}\} & \zero & \zero & \zero & \zero & \zero & \zero & \highlightZ{k_2} \\
\zero & \zero & \zero & \zero & \zero & \zero & \zero & \zero & \highlightZ{k_3} \\
\zero & \{\highlightD{5,6}\} & \zero & \zero & \zero & \zero & \zero & \zero & \highlightZ{k_4} \\
\zero & \zero & \zero & \zero & \zero & \zero & \zero & \zero & \highlightZ{k_5} \\
\end{block}
\end{blockarray}.
	\]
\end{example}

\begin{cor}\label{conjOfTerrDecomp}
Let $w=(f,h) \in W$ and $a=(s,t) \in W$. Then $\mathcal{P}(w ^ a) =
\mathcal{P}(w) ^ t$ and in
particular $\terr(w^a)=\terr(w)^t$. 
\end{cor}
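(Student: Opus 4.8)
The plan is to reduce everything to the two facts already established for single wreath cycles — Lemma~\ref{conjugateInWreath}, which says conjugation preserves the load, and Corollary~\ref{cor:conjugate_terr}, which says conjugation by $(s,t)$ acts on the territory of a wreath cycle by applying $t$ — and to exploit that conjugation by $a$ is an automorphism of $W$, so that it distributes over the wreath cycle decomposition.

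First I would fix a wreath cycle decomposition $w = w_1 \cdots w_\ell$, so that $\C(w) = \{w_1,\dots,w_\ell\}$. Since $x \mapsto x^a$ is an automorphism, $w^a = w_1^a \cdots w_\ell^a$. By Lemma~\ref{conjugateInWreath} each $w_i^a$ is again a wreath cycle, and by Corollary~\ref{cor:conjugate_terr} we have $\terr(w_i^a) = \terr(w_i)^t$. As $t$ is a bijection of $\Gamma$ and the territories $\terr(w_1),\dots,\terr(w_\ell)$ are pairwise disjoint, the territories $\terr(w_i)^t$ are pairwise disjoint as well; hence $w_1^a,\dots,w_\ell^a$ are pairwise disjoint wreath cycles. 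Thus $w^a = w_1^a \cdots w_\ell^a$ is a wreath cycle decomposition of $w^a$, and by the uniqueness in Theorem~\ref{thm:wreathcycle-decomposition} we obtain $\C(w^a) = \{w_1^a,\dots,w_\ell^a\}$.

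Next I would track the loads. Lemma~\ref{conjugateInWreath} gives $\ld(w_i^a) = \ld(w_i)$, so the map $z \mapsto z^a$ sends $\C(w)$ bijectively onto $\C(w^a)$ and, for each load $L$, restricts to a bijection $\C(L,w) \to \C(L,w^a)$; in particular $\L(w^a) = \L(w)$. Combining this with $\terr(z^a) = \terr(z)^t$ from Corollary~\ref{cor:conjugate_terr}, for each $L \in \L(w)$ we get
\[\{\terr(z) : z \in \C(L,w^a)\} = \{\terr(z)^t : z \in \C(L,w)\} = \{\terr(z) : z\in\C(L,w)\}^t,\]
where the $t$ on the right denotes the componentwise action of $t$ on each territory. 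Taking the indexed product over $L \in \L(w) = \L(w^a)$ then yields $\P(w^a) = \P(w)^t$, which is the first claim.

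Finally, the ``in particular'' statement follows immediately, since the territory of $w$ is the union of the territories of its wreath cycles: $\terr(w^a) = \bigcup_i \terr(w_i^a) = \bigcup_i \terr(w_i)^t = \left(\bigcup_i \terr(w_i)\right)^t = \terr(w)^t$. The only point requiring care — and the main, rather mild, obstacle — is verifying that $w_1^a,\dots,w_\ell^a$ genuinely form a \emph{disjoint} wreath cycle decomposition, so that uniqueness applies and the load-indexing carries over; once disjointness is secured from the bijectivity of $t$, the remainder is bookkeeping built on the two cited results.
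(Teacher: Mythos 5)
Your proof is correct and follows essentially the same route as the paper: both reduce to single wreath cycles via Lemma~\ref{conjugateInWreath} (load invariance) and Corollary~\ref{cor:conjugate_terr} (territories map by $t$), and then assemble $\P(w^a)=\P(w)^t$ factor by factor. The only difference is that you make explicit the step the paper leaves implicit when it writes $\C(L,w^a)=\C(L,w)^a$ --- namely that $w_1^a\cdots w_\ell^a$ is again a \emph{disjoint} wreath cycle decomposition, so uniqueness from Theorem~\ref{thm:wreathcycle-decomposition} identifies $\C(w^a)$ with $\C(w)^a$ --- which is a worthwhile clarification but not a different argument.
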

\begin{proof}
By Lemma \ref{conjugateInWreath} we have $\ld(z) = \ld(z^a)$ for all $z\in
\C(w)$ and thus $\C(L,w^a) = \C(L,w)^a$ for all $L\in \L(w)$. Therefore,
\begin{align*}
\P(w^a) &= \bigtimes_{L\in \L(w^a)} \{\terr(z) \,:\, z\in \C(L,w^a)\} = 
\bigtimes_{L\in \L(w)} \{\terr(z^a) \,:\, z\in \C(L,w)\}\\
&=\bigtimes_{L\in \L(w)} \{\terr(z)^t \,:\, z\in \C(L,w)\} = \P(w)^t
\end{align*}
by Corollary \ref{cor:conjugate_terr}.
\end{proof}

The following theorem, which is an explicit version of Theorem
\ref{main-thm-1}, gives us a way to test whether two elements of the wreath product $S$
are conjugate in  $W$ after having computed their wreath
cycle decomposition. The proof of this theorem is constructive as it shows
how to construct a conjugating element if it exists and therefore solves
the conjugacy problem.

\begin{theorem}\label{conjugationInWreathProd}
Two elements $w=(f,h),\, v=(e,g)\in S$ are conjugate in $W$ if and
only if there exists a $t\in H$ such that  $h^t=g$ and $\mathcal{P}(w)^t=\mathcal{P}(v)$.
\end{theorem}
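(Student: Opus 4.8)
The plan is to prove the two implications separately, dispatching the forward direction with the machinery already in place and concentrating the real work on the explicit construction in the backward direction.

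For the forward implication, suppose $w$ and $v$ are conjugate in $W$, say $v=w^a$ with $a=(s,t)\in W$, so that $t\in H$. Comparing top components shows $g=t^{-1}ht=h^t$, and Corollary \ref{conjOfTerrDecomp} gives $\mathcal{P}(v)=\mathcal{P}(w^a)=\mathcal{P}(w)^t$. Hence this $t$ has both required properties and nothing further is needed.

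For the backward implication I would start from a $t\in H$ with $h^t=g$ and $\mathcal{P}(w)^t=\mathcal{P}(v)$ and first extract a matching of wreath cycles. Unravelling Definition \ref{def:territory_decomposition}, the equality $\mathcal{P}(w)^t=\mathcal{P}(v)$ says that for every load $L$ we have $\{\terr(z)^t : z\in\C(L,w)\}=\{\terr(z') : z'\in\C(L,v)\}$. Since the cycles of a wreath cycle decomposition are pairwise disjoint and $t$ is a bijection of $\Gamma$, the rule $z\mapsto\phi(z)$, where $\phi(z)$ is the unique cycle of $v$ with $\terr(\phi(z))=\terr(z)^t$, defines a load-preserving bijection $\phi\colon\C(w)\to\C(v)$. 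A matched pair $z=(f_z,h_z)$ and $\phi(z)$ then shares a load; from $h^t=g$ together with $\terr(z)^t=\terr(\phi(z))$ one checks that the top component of $\phi(z)$ equals $h_z^t$ (two cycles of $g$ with equal support coincide), while in the load-one case $t$ maps the singleton $\terr(z)$ onto $\terr(\phi(z))$. Thus the prescribed $t$ satisfies, cycle by cycle, exactly the hypotheses under which the construction in the proof of Lemma \ref{lem:load_invariant} operates.

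I would then feed this $t$ into that construction: for each $z$ it produces a base component $s_z$ supported on $\terr(z)$ with $z^{(s_z,t)}=\phi(z)$. Because the territories $\terr(z)$ are pairwise disjoint, the $s_z$ glue to a single $s\in K^\Gamma$ restricting to $s_z$ on each $\terr(z)$ and to $1_K$ off $\terr(w)$; set $a=(s,t)\in W$. Writing $w=\prod_z z$ and using that conjugation is an automorphism gives $w^a=\prod_z z^a$, so it suffices to show $z^a=\phi(z)$ for every $z$. Here the base-component formula of Equation \eqref{equation-gamma} is decisive: the component of $z^{(s,t)}$ at $\gamma$ depends on $s$ only through $[\gamma^{t^{-1}}]s$ and $[\gamma^{t^{-1}h_z}]s$, both of which lie in $\terr(z)$ when $\gamma\in\terr(z)^t$, whereas for $\gamma\notin\terr(z)^t$ one has $[\gamma^{t^{-1}}]f_z=1_K$ and the two $s$-contributions cancel via $[\gamma^{t^{-1}}]s^{-1}\cdot[\gamma^{t^{-1}}]s=1_K$. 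Consequently $z^{(s,t)}$ sees only the values of $s$ on $\terr(z)$, so $z^a=z^{(s_z,t)}=\phi(z)$, and since $\phi$ is a bijection onto $\C(v)$ we conclude $w^a=\prod_z\phi(z)=v$.

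The main obstacle is precisely this last locality argument. The subtlety is not in conjugating a single wreath cycle — that is Lemma \ref{lem:load_invariant} — but in assembling the independently built $s_z$ into one base component $s$ without the values belonging to one cycle interfering with the conjugation of another. The disjointness of the territories together with the cancellation at points outside the relevant territory is exactly what licenses the gluing, and verifying it carefully is the crux of the proof.
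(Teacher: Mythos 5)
Your proposal is correct and follows essentially the same route as the paper's own proof: the forward direction via Corollary \ref{conjOfTerrDecomp}, and the backward direction by reading off a load-preserving bijection between the wreath cycles of $w$ and $v$ from $\mathcal{P}(w)^t=\mathcal{P}(v)$, building each $s_z$ via the construction in Lemma \ref{lem:load_invariant}, and gluing them on the pairwise disjoint territories. Your explicit verification that $z^{(s,t)}=z^{(s_z,t)}$ (locality plus cancellation off $\terr(z)$) is exactly the point the paper dispatches with the remark that $[\gamma]s=[\gamma]s_z$ on $\terr(z)$; you merely spell it out in more detail.
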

\begin{proof}
We first show the if direction: If there exists $a =(s,t) \in W$ such that
$w^a = v$, then $h^t=g$ and by Corollary \ref{conjOfTerrDecomp} the claim follows.

We now show the only-if direction, so assume the existence of a $t\in H$
with $\mathcal{P}(w)^t=\mathcal{P}(v)$ and $h^t=g$, hence $\L(w)=\L(v)$.
As we can identify each cycle with its territory, $t$ induces
a unique bijection $\sigma: \C(w)\xrightarrow{1:1}\C(v)$ such that
$\sigma$ maps $\C(L,w)$ to $\C(L,v)$ bijectively for all $L\in \L(w)$ and 
$\terr(z)^t = \terr([z]\sigma)$ for all $z\in \C(L,w)$. Moreover, $t$
conjugates the
top component of $z\in \C(L,w)$ to the top component of
$[z]\sigma$ as $h^t=g$. Additionally, $\ld(z) = \ld([z]\sigma)$ for
all $z\in \C(L,w)$.\\
We now construct a base component $s \in K^\Gamma$ such that $a=(s,t)\in W$ satisfies $w^a=v$.
As in the proof of Lemma
\ref{lem:load_invariant}, for each $z\in \C(L,w)$, we can find an $s_z \in
K^\Gamma$ with $\left(s_z,t\right)^{-1}\cdot z\cdot
\left(s_z,t\right)=[z]\sigma$. Define 
\[
s \coloneqq \prod_{z\in \C(w)}s_z
\]
as the product of all these maps for all disjoint wreath cycles in a
decomposition of $w$. By construction
$[\gamma]s=[\gamma]s_z$ if $\gamma \in \terr(z)$ since
$[\gamma]s_{y}=1_K$ for all $y \neq z$.
Thus
\[
w^a =\prod_{z\in \C(w)}z^{(s,t)} = \prod_{z\in \C(w)}(s_z,t)^{-1}\cdot z\cdot (s_z,t) = 
\prod_{z\in \C(w)}[z]\sigma = v,
\]
which concludes the proof.
\end{proof}
There are several ways of finding a conjugating element. One could 
formulate this as a single backtrack problem seeking 
an element $t\in H$ which simultaneously conjugates $h$ to $g$ and maps
$\mathcal{P}(w)$ to $\mathcal{P}(v)$. 
Alternatively, one could first find an element
$t\in H$ with $h=g^t$, then compute $C_H(h)$ and check if $\mathcal{P}(v)^{t}$ is in
the orbit of $\mathcal{P}(w)$ under $C_H(h)$. If not, $w$ and $v$ are not
conjugate. Backtrack may also be helpful in the second approach. The
implementation by the third author in \cite{Rober_Package} uses the second
approach.
For many search problems in permutation groups, partition backtrack is
still the state of the art algorithm. For an exposition on the backtrack
strategy frequently used, namely partition
backtrack, see for example \cite{Leon}.

The special case of $H=\symg$ is already described in \cite[Theorem
4.2.8]{Kerber}. We obtain the same result as a corollary to the above
theorem.
\begin{cor}\label{cor:conjinS}
Two elements $w,\, v \in S$ are conjugate in $S$ if and only if
$\L(w)=\L(v)$ and $\vert \C(L,w) \vert = \vert \C(L,v)\vert $ for all $L\in
\L(w)$.
\end{cor}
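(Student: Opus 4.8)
The statement to prove is Corollary \ref{cor:conjinS}: two elements $w, v \in S$ are conjugate in $S$ if and only if $\L(w) = \L(v)$ and $|\C(L,w)| = |\C(L,v)|$ for all $L \in \L(w)$. This is the special case $H = \symg$ of Theorem \ref{conjugationInWreathProd}, so the natural plan is to derive it as a direct consequence of that theorem.

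Let me think about the structure. Theorem \ref{conjugationInWreathProd} says $w \sim v$ in $W$ iff there is $t \in H$ with $h^t = g$ and $\P(w)^t = \P(v)$. Here $W = S$, so $H = \symg$, and $t$ ranges over all of $\symg$. I need to translate the condition "$\exists t \in \symg$ with $h^t = g$ and $\P(w)^t = \P(v)$" into the combinatorial condition "$\L(w) = \L(v)$ and $|\C(L,w)| = |\C(L,v)|$ for all $L$."

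For the forward direction of this translation: if such a $t$ exists, then by the theorem $w \sim v$, and conversely by Corollary \ref{conjOfTerrDecomp}, $\P(w^a) = \P(w)^t$ for a conjugating $a = (s,t)$, and this bijects $\C(L,w)$ with $\C(L,v)$, forcing $\L(w) = \L(v)$ and equal cardinalities.

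The real content is the other direction: given the cardinality/load data agree, I must *build* a $t \in \symg$ achieving both $h^t = g$ and $\P(w)^t = \P(v)$.

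**Plan.** The proof proceeds as follows.

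First I would handle the easy direction. Suppose $w$ and $v$ are conjugate in $S$, say $v = w^a$ with $a = (s,t)$. By Corollary \ref{conjOfTerrDecomp} we have $\P(v) = \P(w)^t$, so $t$ carries the indexed family $\{\terr(z) : z \in \C(L,w)\}$ bijectively onto $\{\terr(z) : z \in \C(L,v)\}$ for each load $L$; in particular $\L(w) = \L(v)$ and $|\C(L,w)| = |\C(L,v)|$ for every $L$.

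For the converse, assume $\L(w) = \L(v)$ and $|\C(L,w)| = |\C(L,v)|$ for all $L$. The strategy is to construct $t \in \symg$ directly and then invoke Theorem \ref{conjugationInWreathProd}. For each load $L \in \L(w)$, the equal cardinality of $\C(L,w)$ and $\C(L,v)$ lets me fix an arbitrary bijection $\sigma_L \colon \C(L,w) \to \C(L,v)$. For each pair $z \in \C(L,w)$ and $z' = [z]\sigma_L \in \C(L,v)$, note that $z$ and $z'$ share the same load $L$, hence by Lemma \ref{lem:load_invariant} they are conjugate in $S$; the conjugating element constructed in that proof has a top component $t_z \in \symg$ satisfying $\terr(z)^{t_z} = \terr(z')$ and conjugating the top component of $z$ to that of $z'$. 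The key observation is that the territories of the distinct cycles $z \in \C(w)$ are pairwise disjoint, and likewise for $\C(v)$, since the wreath cycle decomposition consists of pairwise disjoint cycles. Because the $t_z$ act on disjoint blocks $\terr(z)$ (outside of which I may take them to act trivially) and land in the disjoint blocks $\terr(z')$, I can amalgamate them into a single permutation $t \in \symg$ defined piecewise: on each $\terr(z)$, let $t$ agree with $t_z$, and on $\Gamma \setminus \bigcup_{z} \terr(z)$ let $t$ be the identity.

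I then verify that this $t$ satisfies the two hypotheses of Theorem \ref{conjugationInWreathProd}. For $\P(w)^t = \P(v)$: by construction $\terr(z)^t = \terr(z)^{t_z} = \terr([z]\sigma_L)$ for each $z$, so $t$ sends the territory family of $w$ to that of $v$ load by load, giving $\P(w)^t = \P(v)$. For $h^t = g$: writing $h = \prod_z h_z$ and $g = \prod_{z'} g_{z'}$ as products of the top components of the cycles (where trivial-top cycles contribute $1_H$), the permutation $t$ conjugates each $h_z$ (supported on $\terr(z)$ when $h_z \neq 1_H$) to $g_{[z]\sigma_L}$, and since the supports are disjoint these conjugations are independent, so $h^t = g$. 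With both conditions in hand, Theorem \ref{conjugationInWreathProd} yields that $w$ and $v$ are conjugate in $W = S$.

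**Main obstacle.** The delicate point is verifying that the locally-defined pieces $t_z$ genuinely patch together into a single well-defined bijection $t$ of $\Gamma$, i.e. that $t$ is a permutation. This rests entirely on the disjointness of the territories $\{\terr(z)\}_{z \in \C(w)}$ and $\{\terr(z')\}_{z' \in \C(v)}$ — a permutation built blockwise from bijections $\terr(z) \to \terr([z]\sigma_L)$ between matched disjoint blocks is well-defined precisely because no point lies in two source blocks and no point is hit from two target blocks. One must also ensure that when a top component is trivial (the case $h_z = 1_H$, a singleton territory), the piece $t_z$ is still chosen so that $t$ remains a genuine permutation rather than just a partial injection; extending by the identity on the complement $\Gamma \setminus \bigcup_z \terr(z)$ handles this, and since $\Gamma$ is finite an injection on all of $\Gamma$ is automatically a bijection. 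Once this bookkeeping is settled, the rest reduces to the already-established Theorem \ref{conjugationInWreathProd}.
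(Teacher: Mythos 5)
Your overall strategy is the intended one: the paper offers no separate proof of Corollary \ref{cor:conjinS} and simply records it as a consequence of Theorem \ref{conjugationInWreathProd}, and your reduction — match up wreath cycles of equal load via bijections $\sigma_L$, borrow the local top components from the proof of Lemma \ref{lem:load_invariant}, and patch them along the pairwise disjoint territories — is exactly the argument one would write out. The forward direction via Corollary \ref{conjOfTerrDecomp} is also correct.

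There is, however, one step that fails as written: extending $t$ by the \emph{identity} on $\Gamma \setminus \bigcup_z \terr(z) = \Gamma\setminus\terr(w)$ does not in general produce a permutation, because $\terr(w)$ and $\terr(v)$ need not be equal as subsets of $\Gamma$. Concretely, if $\Gamma=\{1,2\}$ and $w$, $v$ are single wreath cycles with trivial top components and territories $\{1\}$ and $\{2\}$ respectively, your recipe sends $1\mapsto 2$ (via $t_z$) and $2\mapsto 2$ (via the identity), which is not injective — so the appeal to ``an injection on a finite set is a bijection'' never gets off the ground. The fix is immediate: since matched cycles have territories of equal size (the size is determined by the load), the hypothesis $\L(w)=\L(v)$ and $\vert\C(L,w)\vert=\vert\C(L,v)\vert$ gives $\vert\terr(w)\vert=\vert\terr(v)\vert$, hence $\vert\Gamma\setminus\terr(w)\vert=\vert\Gamma\setminus\terr(v)\vert$, and one should complete $t$ by an \emph{arbitrary} bijection $\Gamma\setminus\terr(w)\to\Gamma\setminus\terr(v)$ rather than by the identity. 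With that replacement the rest of your verification of $h^t=g$ and $\P(w)^t=\P(v)$ goes through, and Theorem \ref{conjugationInWreathProd} applies with $H=\symg$.
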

Applying the above theory, we give an example for conjugacy testing in wreath products.
\begin{example}
\label{Example-Wreath-Product-Conjugacy-Problem}
We use the notation from Example \ref{Example-Wreath-Product-Territory-Partition}
and highlight wreath cycle decompositions by colouring points of $\Gamma$.
We computed a wreath cycle decomposition of $w$ in
Example \ref{Example-Wreath-Product-Wreath-Cycle-Decomp} and $\mathcal{P}(w)$ in 
Example \ref{Example-Wreath-Product-Territory-Partition} using the same colours. 
Consider the elements $w = (f,h),\, v = (e,g) \in S$:
\[
\arraycolsep=0.15em\def\arraystretch{2.2}
\begin{array}{l*{10}{c}}
w \coloneqq \bigl(
& \overset{\highlightB{1}}{(1,2)(3,4)},
& \overset{\highlightB{2}}{(3,4)},
& \overset{\highlightC{3}}{()},
& \overset{\highlightC{4}}{(1,2)},
& \overset{\highlightD{5}}{(1,2,3)},
& \overset{\highlightD{6}}{()},
& \overset{\highlightA{7}}{(1,2)},
& \overset{\highlightZ{8}}{()};
& \overset{\text{top}}{(\highlightB{1,2})(\highlightC{3,4})(\highlightD{5,6})}
&\bigr), \\[10pt]
v \coloneqq \bigl(
& \overset{\highlightF{1}}{(3,4)},
& \overset{\highlightF{2}}{()},
& \overset{\highlightH{3}}{()},
& \overset{\highlightH{4}}{(1,2,3)},
& \overset{\highlightG{5}}{(1,2)},
& \overset{\highlightG{6}}{()},
& \overset{\highlightZ{7}}{()},
& \overset{\highlightE{8}}{(3,4)};
& \overset{\text{top}}{(\highlightF{1,2})(\highlightH{3,4})(\highlightG{5,6})}
&\bigr).
\end{array}
\]
The top components of $w$ and $v$ are equal and we have
\[
\begin{blockarray}{*{10}{c}}
& \scriptstyle \highlightZ{1} & \scriptstyle \highlightZ{2} & \scriptstyle \highlightZ{3} & \scriptstyle \highlightZ{4} & \scriptstyle \highlightZ{5} & \scriptstyle \highlightZ{6} & \scriptstyle \highlightZ{7} & \scriptstyle \highlightZ{8} & \\
\begin{block}{l[*{8}{c}]>{\scriptstyle}r}
& \zero & \zero & \zero & \zero & \zero & \zero & \zero & \zero & \highlightZ{k_1}\\
& \{\highlightA{7}\} & \{\highlightB{1,2}\}, \{\highlightC{3,4}\} & \zero & \zero & \zero & \zero & \zero & \zero & \highlightZ{k_2} \\
\mathcal{P}(w) = & \zero & \zero & \zero & \zero & \zero & \zero & \zero & \zero & \highlightZ{k_3} \\
& \zero & \{\highlightD{5,6}\} & \zero & \zero & \zero & \zero & \zero & \zero & \highlightZ{k_4} \\
& \zero & \zero & \zero & \zero & \zero & \zero & \zero & \zero & \highlightZ{k_5} \\
\end{block} \\
& \scriptstyle \highlightZ{1} & \scriptstyle \highlightZ{2} & \scriptstyle \highlightZ{3} & \scriptstyle \highlightZ{4} & \scriptstyle \highlightZ{5} & \scriptstyle \highlightZ{6} & \scriptstyle \highlightZ{7} & \scriptstyle \highlightZ{8} & \\
\begin{block}{l[*{8}{c}]>{\scriptstyle}r}
& \zero & \zero & \zero & \zero & \zero & \zero & \zero & \zero & \highlightZ{k_1}\\
& \{\highlightE{8}\} & \{\highlightF{1,2}\}, \{\highlightG{5,6}\} & \zero & \zero & \zero & \zero & \zero & \zero & \highlightZ{k_2} \\
\mathcal{P}(v) = & \zero & \zero & \zero & \zero & \zero & \zero & \zero & \zero & \highlightZ{k_3} \\
& \zero & \{\highlightH{3,4}\} & \zero & \zero & \zero & \zero & \zero & \zero & \highlightZ{k_4} \\
& \zero & \zero & \zero & \zero & \zero & \zero & \zero & \zero & \highlightZ{k_5} \\
\end{block}
\end{blockarray}.
\]
Observe $\L(v)=\L(w)$ and $\vert \C(L,v)\vert=\vert \C(L,w)\vert$ for all
$L\in \L(v)$ and thus $v$ and $w$ are conjugate in $S = K \wr \sym(\Gamma)$
by Corollary \ref{cor:conjinS}.\\
Now let us consider three different choices for the  top group $H_i$
($1\leq i\leq 3$) for the 
wreath product $W_i \coloneqq K \wr H_i$ and decide whether $w$ and $v$ are
conjugate in $W$.
For this we need to check if $\mathcal{P}(v) \in \mathcal{P}(w)^{C_H(h)}$.
We use three different top groups:
\[
\begin{array}{lcl}
H_1 \coloneqq& \langle ~ (1,2)(3,4),~ (1,2,3,4),~ (5,6),~ (7,8) ~ \rangle & \cong D_8 \times C_2 \times C_2, \\
H_2 \coloneqq& \langle ~ (1,2)(3,4)(5,6),~ (3,5)(4,6)(7,8) ~ \rangle & \cong C_2 \times C_2, \\
H_3 \coloneqq& \langle ~ (1,2)(3,4)(5,6),~ (7,8) ~ \rangle & \cong C_2 \times C_2.
\end{array}
\]
Since the submatrix induced by the columns from $3$ to $8$ in $\mathcal{P}(w)$
consists of empty entries, we condense the notation and write $\zero$ for a
matrix of appropriate dimension with empty entries.
We have
\begin{align*}
\mathcal{P}(w)^{C_{H_1}(h)} &= \left\{
\begin{blockarray}{*{4}{c}}
\scriptstyle \highlightZ{1} & \scriptstyle \highlightZ{2} & \scriptstyle \highlightZ{3, \dots, 8} & \\
\begin{block}{[*{3}{c}]l}
\zero & \zero & \zero & \\
\{7\} & \{1,2\}, \{3,4\} & \zero & \\
\zero & \zero & \zero & \\
\zero & \{5,6\} & \zero \\
\zero & \zero & \zero & , \\
\end{block}
\end{blockarray}
\begin{blockarray}{*{3}{c}}
\scriptstyle \highlightZ{1} & \scriptstyle \highlightZ{2} & \scriptstyle \highlightZ{3, \dots, 8} \\
\begin{block}{[*{3}{c}]}
\zero & \zero & \zero \\
\{8\} & \{1,2\}, \{3,4\} & \zero \\
\zero & \zero & \zero \\
\zero & \{5,6\} & \zero \\
\zero & \zero & \zero \\
\end{block}
\end{blockarray}
\right\}, \\
\mathcal{P}(w)^{C_{H_2}(h)}  &= \left\{
\begin{blockarray}{*{4}{c}}
\scriptstyle \highlightZ{1} & \scriptstyle \highlightZ{2} & \scriptstyle \highlightZ{3, \dots, 8} & \\
\begin{block}{[*{3}{c}]l}
\zero & \zero & \zero & \\
\{7\} & \{1,2\}, \{3,4\} & \zero & \\
\zero & \zero & \zero & \\
\zero & \{5,6\} & \zero \\
\zero & \zero & \zero & , \\
\end{block}
\end{blockarray}
\begin{blockarray}{*{3}{c}}
\scriptstyle \highlightZ{1} & \scriptstyle \highlightZ{2} & \scriptstyle \highlightZ{3, \dots, 8} \\
\begin{block}{[*{3}{c}]}
\zero & \zero & \zero \\
\{8\} & \{1,2\}, \{5,6\} & \zero \\
\zero & \zero & \zero \\
\zero & \{3,4\} & \zero \\
\zero & \zero & \zero \\
\end{block}
\end{blockarray}
\right\},\\
\mathcal{P}(w)^{C_{H_3}(h)} &= \mathcal{P}(w)^{C_{H_1}(h)}.
\end{align*}
Hence $w$ and $v$ are not conjugate in $W_1$ and $W_3$, but are conjugate in $W_2$.
Now let us construct an element $a = (s, t) \in W_2$ with $w^a = v$.
First we compute an element $t\in C_{H_2}(h)$ with $\mathcal{P}(w)^t =
\mathcal{P}(v)$, for example 
$t = (3,5)(4,6)(7,8) \in C_{H_2}(h)$.
Using the above colouring to encode the wreath cycles, we write $w$ and $v$ in a
disjoint wreath cycle decomposition as in Equation
\ref{notation:wr_decomp}, where $w_{i,j,\ell}$ denotes the $\ell$-th wreath cycle
of load $(k_i^K,j)$:
\begin{align*}
w &= \highlightA{w_{2,1,1}}
\cdot \highlightB{w_{2,2,1}}
\cdot \highlightC{w_{2,2,2}}
\cdot \highlightD{w_{4,2,1}}, \\
v &= \highlightE{v_{2,1,1}}
\cdot \highlightF{v_{2,2,1}}
\cdot \highlightG{v_{2,2,2}}
\cdot \highlightH{v_{4,2,1}}.
\end{align*}
Suppose $w_{i,j,\ell} =(f_{i,j,\ell}, h_{i,j,\ell})$ and $v_{i,j,\ell} =
(e_{i,j,\ell}, h_{i,j,\ell})$.
Next, compute the bijection $\sigma\colon \C(w)\xrightarrow{1:1}\C(v)$ recording
the mapping induced by $a$, i.e.
$z^a = [z]\sigma$ for every $z \in \C(w)$, where the base component of $a = (s,t)$
is yet to be constructed.
As $\sigma$ only depends on the top component of $a$, it is already
determined by $[w_{i,j,\ell}]\sigma = v_{i,j,\ell}$.
Now it remains to construct the base component $s \in K^\Gamma$ of the
conjugating element $a$.
For this we construct elements $s_{i,j,\ell} \in K^\Gamma$,
such that $w_{i,j,\ell}^{(s_{i,j,\ell},\, t)} = v_{i,j,\ell}$ as in the
proof of Theorem \ref{conjugationInWreathProd}.
We demonstrate this for the wreath cycle $\highlightB{w_{2,2,1}}$ which
takes the place of the cycle $z$ in the proof.
First we compute two Yades:
\begin{align*}
[1, \highlightB{w_{2,2,1}}]\Yade
&= (1,2)(3,4) \cdot (3,4) = (1,2), \\
\left[1^t, \highlightF{v_{2,2,1}}\right]\operatorname{Yade}
&= (3,4) \cdot () = (3,4).
\end{align*}
Next, note that $x \coloneqq (1,3)(2,4) \in K$ conjugates $(1,2)$ to $(3,4)$.
Then compute the following elements of $K$ used in Lemma \ref{le:conjugateCs}:
\begin{align*}
c_{0} &\coloneqq x
= (1,3)(2,4)\\
c_{1} &\coloneqq [1]\highlightB{f_{2,2,1}}^{-1} \cdot c_{0} \cdot \left[1^t
\right]\highlightF{e_{2,2,1}}
= \bigl((1,2)(3,4)\bigr)^{-1} \cdot (1,3)(2,4) \cdot (3,4) = (1,3,2,4).
\end{align*}
We proceed to define the element $\highlightB{s_{2,2,1}}$ as
\[
\highlightB{s_{2,2,1}} \colon \Gamma \to K, \gamma \mapsto \begin{cases}
1_K, &\text{ if } \gamma \not \in \terr(\highlightB{w_{2,2,1}}),\\
c_{i}, &\text{ if } \gamma=1^{\highlightB{h_{2,2,1}^i}} \text{ for } 0\leq i
\leq 1.
\end{cases}\]
Hence we have
\[
\begin{array}{l*{9}{c}}
\highlightB{s_{2,2,1}} = \bigl(
& \overset{\highlightB{1}}{(1,3)(2,4)},
& \overset{\highlightB{2}}{(1,3,2,4)},
& \overset{\highlightZ{3}}{()},
& \overset{\highlightZ{4}}{()},
& \overset{\highlightZ{5}}{()},
& \overset{\highlightZ{6}}{()},
& \overset{\highlightZ{7}}{()},
& \overset{\highlightZ{8}}{()}
&\bigr).
\end{array}
\]
Analogously we compute $\highlightA{s_{2,1,1}}$, $\highlightC{s_{2,2,1}}$ and $\highlightD{s_{4,2,1}}$.
This yields
\[
\arraycolsep=0.18em\def\arraystretch{2.2}
\begin{array}{l*{10}{c}}
a = \bigl(
& \overset{\highlightB{1}}{(1,3)(2,4)},
& \overset{\highlightB{2}}{(1,3,2,4)},
& \overset{\highlightC{3}}{()},
& \overset{\highlightC{4}}{(1,2)},
& \overset{\highlightD{5}}{()},
& \overset{\highlightD{6}}{(1,3,2)},
& \overset{\highlightA{7}}{(1,3)(2,4)},
& \overset{\highlightZ{8}}{()};
& \overset{\text{top}}{(3,5)(4,6)(7,8)}
&\bigr)\,.
\end{array}
\]
\end{example}

\section{Conjugacy classes in wreath products}
Recall the setting from Hypothesis \ref{setting}.
In this section, we
parameterise the $W$-conjugacy classes $w^W$
of arbitrary elements $w\in S$. This is achieved via defining bijections
between certain iterated  
cartesian products and $w^W$.
The notation is chosen to reflect the way we construct these conjugacy
classes. When using a cartesian product $A\times B$ to parameterise a set, 
one should view this as first choosing an element in $A$ and then in
$B$. 

The conjugacy class sizes and the number of conjugacy classes in the
full monomial group $S$ are already known, see for example James and Kerber
\cite[4.2.9, 4.2.10]{Kerber}.\\
The maps we define to parameterise the $W$-conjugacy classes of elements of
$S$ are constructed in such a
way that they can be implemented directly in computer algebra systems such
as \textsf{GAP}\cite{gap} or \textsc{Magma}\cite{Magma}.
For instance, the third author implemented
computation of $W$-conjugacy classes in this way
in the \textsf{GAP} package \textit{WPE}, see \cite{Rober_Package}.\\
We extend the notation $\C(W)$ from Definition \ref{def:yade} naturally to
refer to wreath cycles of $W$ with a fixed top component $h$.
\begin{definition}
For \(h \in H\) denote the set of all wreath cycles of \(W\) with top component \(h\) by
\[\C(W,h) \coloneqq \{(f,h) \in W \,:\, (f,h) \text{ is a wreath cycle}\}.\]
\end{definition}
Let $1_H \neq h\in H$ be a single cycle and
$\gamma_0\in\supp(h)$. Given an $x\in K$, we 
construct all wreath cycles $w=(f,h)\in K\wrg H$ with top component $h$ and
$[\gamma_0, w]\Yade =x$. The image of the map $[h,\gamma_0,x,-]\mathcal{B}$
defined in the following lemma yields the base components of the desired
wreath cycles.
\begin{lemma}
\label{wreath-cycle-fixed-yade-parameterisation}
Let $1_H \neq h \in H$ be a cycle, $\gamma_0 \in \supp(h)$
and $x\in K$. Define
$$[h,\gamma_0,x,-]\mathcal{B} \colon K^{\supp(h)\setminus\{\gamma_0\}}
\hookrightarrow K^\Gamma,\,
d \mapsto [h,\gamma_0,x,d]\mathcal{B}$$ where 
\[
[h,\gamma_0,x,d]\mathcal{B}=
	\gamma \mapsto \begin{cases}
		x\cdot \prod\limits_{i=1}^{\vert h\vert -1} \left[\gamma^{h^{\vert h\vert -i}}\right]d^{-1}, & \text{if } \gamma = \gamma_0\\
		[\gamma]d,& \text{if } \gamma \in \supp(h)\setminus\{\gamma_0\}\\
		1_K, & \text{if } \gamma \in \Gamma \setminus \supp(h).
	\end{cases}
\]
Then the following statements hold:
\begin{enumerate}
\item $[h,\gamma_0,x,-]\mathcal{B}$ is an injection.
\item $w\coloneqq ([h,\gamma_0,x,d]\mathcal{B},h) \in \C(W,h)$, i.e. $w$ is
a wreath cycle of $W$ with top component $h$.
\item The map $[h,\gamma_0,x,-]\mathcal{B}$ induces a bijection
\[
\varphi\colon K^{\supp(h)\setminus\{\gamma_0\}} \xrightarrow{1:1} \{w \in \C(W,h) \,:\,
[\gamma_0, w]\Yade=x\},\, d \mapsto ([h,\gamma_0,x,d]\mathcal{B}, h).
\]
\item If $K$ is finite, we have \[\vert\{w \in \C(W,h) \,:\,
[\gamma_0, w]\Yade=x\}\vert = \vert K\vert^{\vert h\vert -1}.\]
\end{enumerate}
\end{lemma}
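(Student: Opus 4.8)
The plan is to begin by making the set $\C(W,h)$ explicit. Since $1_H \neq h$ is a single non-trivial cycle, Definition~\ref{Definition-Wreath-Cycle} says that $(f,h)$ is a wreath cycle exactly when $\terr((f,h)) = \supp(h)$. As $\terr((f,h)) = \supp(h) \cup \{\gamma \in \Gamma : [\gamma]f \neq 1_K\}$, this holds if and only if $f$ is supported on $\supp(h)$, i.e.\ $[\gamma]f = 1_K$ for all $\gamma \notin \supp(h)$. Thus $\C(W,h)$ is parameterised by the base components $f$ vanishing off $\supp(h)$. Writing $m \coloneqq \vert h\vert$ and $\gamma_i \coloneqq \gamma_0^{h^i}$, so that $\supp(h) = \{\gamma_0,\dots,\gamma_{m-1}\}$ with $\vert \supp(h)\vert = m$, such an $f$ is determined by the tuple $([\gamma_0]f,\dots,[\gamma_{m-1}]f) \in K^m$.

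Next I would unwind the Yade from Definition~\ref{def:yade}: for $w = (f,h) \in \C(W,h)$,
\[
[\gamma_0,w]\Yade = \prod_{i=0}^{m-1}\left[\gamma_0^{h^i}\right]f = [\gamma_0]f\cdot[\gamma_1]f\cdots[\gamma_{m-1}]f.
\]
Solving the equation $[\gamma_0,w]\Yade = x$ for the single factor $[\gamma_0]f$ in the possibly non-abelian group $K$ yields $[\gamma_0]f = x\cdot[\gamma_{m-1}]f^{-1}\cdots[\gamma_1]f^{-1}$. Setting $[\gamma_i]d \coloneqq [\gamma_i]f$ for $1 \leq i \leq m-1$, this is exactly the value prescribed by the first branch of $[h,\gamma_0,x,d]\mathcal{B}$. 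This is the conceptual core of the lemma: the $m-1$ base components away from $\gamma_0$ are free parameters, and they together with $x$ force the component at $\gamma_0$.

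With this identity available the individual claims are short. Injectivity of $[h,\gamma_0,x,-]\mathcal{B}$ is immediate, since by its second branch the image agrees with $d$ on $\supp(h)\setminus\{\gamma_0\}$, so $d$ is recoverable from the image; and by the third branch every image vanishes off $\supp(h)$, whence $w = ([h,\gamma_0,x,d]\mathcal{B},h) \in \C(W,h)$. For the bijection $\varphi$ I would check both directions. That $\varphi$ lands in the target set is a direct telescoping computation: substituting the prescribed $[\gamma_0]f$ into the product above, the factors $[\gamma_{m-1}]d^{-1}\cdots[\gamma_1]d^{-1}$ cancel $[\gamma_1]d\cdots[\gamma_{m-1}]d$ pairwise from the inside out, leaving $[\gamma_0,w]\Yade = x$. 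Conversely, given any $w = (f,h) \in \C(W,h)$ with $[\gamma_0,w]\Yade = x$, put $d \coloneqq \restr{f}{\supp(h)\setminus\{\gamma_0\}}$; then $[h,\gamma_0,x,d]\mathcal{B}$ agrees with $f$ on $\supp(h)\setminus\{\gamma_0\}$ and off $\supp(h)$ by construction, and at $\gamma_0$ by the displayed solution, so $\varphi(d) = w$. Hence $\varphi$ is a bijection.

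Finally, for finite $K$ the count follows by transporting cardinalities along $\varphi$, using $\vert K^{\supp(h)\setminus\{\gamma_0\}}\vert = \vert K\vert^{\vert\supp(h)\vert - 1} = \vert K\vert^{\vert h\vert - 1}$. The computations are entirely routine; the only points demanding care are keeping the non-commutative products in the correct order so that the Yade cancellation genuinely telescopes, and recording at the outset that membership in $\C(W,h)$ is precisely the condition that $f$ is supported on $\supp(h)$. I expect the reconstruction step in the surjectivity argument to carry the real content, with everything else amounting to bookkeeping.
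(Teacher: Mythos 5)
Your proposal is correct and follows essentially the same route as the paper's proof: well-definedness via the telescoping Yade computation, injectivity by recovering $d$ from the second branch, and surjectivity by restricting the base component of a given wreath cycle to $\supp(h)\setminus\{\gamma_0\}$ and checking agreement at $\gamma_0$. Your explicit observation up front that membership in $\C(W,h)$ amounts to $f$ being supported on $\supp(h)$ is a useful clarification the paper leaves implicit, but it does not change the argument.
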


\begin{proof}
First note that $[h,\gamma_0,x,-]\mathcal{B}$ is injective since we embed
$d$ in $\supp(h)\setminus \{\gamma_0\}$.
Next we show that $\varphi$ is well-defined. 
Let $d \in K^{\supp(h)\setminus\{\gamma_0\}}$, define $f \coloneqq
[h,\gamma_0,x,d]\mathcal{B}$ and $w \coloneqq (f, h)$.
Then $\terr(w) = \supp(h)$ and
\[
[\gamma_0, w]\Yade = \prod_{i=0}^{\vert h\vert -1} \left[\gamma_0^{h^i}\right]f
= x \,\cdot \, \prod\limits_{i=1}^{\vert h\vert -1} \left[\gamma_0^{h^{\vert h\vert -i}}\right]d^{-1}
\, \cdot \, \prod_{i=1}^{\vert h\vert -1}\left[\gamma_0^{h^{i}}\right]d=x.
\]
This shows $\im(\varphi)\subseteq \{w \in \C(W,h) \,:\,
[\gamma_0, w]\Yade=x\}$ and clearly $\varphi$ is injective.
Now let $w=(f,h)\in \{w \in \C(W,h) \,:\,
[\gamma_0, w]\Yade=x\}$.
We construct an element $d\in K^{\supp(h)\setminus
\{\gamma_0\}}$ with $[h,\gamma_0,x,d]\mathcal{B} = f$. Define 
$d:\supp(h)\setminus\{\gamma_0\} \to K, \gamma\mapsto [\gamma]f$ and 
$e\coloneqq [h,\gamma_0,x,d]\mathcal{B}$. For any $\gamma\in
\Gamma\setminus\{\gamma_0\}$ we have $[\gamma]e = [\gamma]f$. Now suppose
$\gamma=\gamma_0$. Then 
\[
[\gamma_0]e = [\gamma_0, w]\Yade \,\cdot \, \prod
\limits_{i=1}^{\vert h\vert -1} \left[\gamma_0^{h^{\vert h\vert -i}}\right]f^{-1} =
[\gamma_0]f.
\]
In particular $e=f$ and thus part 3 follows immediately.
\end{proof}
\begin{example}
\label{Example-Yade-Mapping-B}
Let $K \coloneqq \sym(\{1, \dots, 4\})$,
$\Gamma \coloneqq \{1,\dots, 8\}$ and
$S \coloneqq K \wr \sym(\Gamma)$.
We compute the image of $d$ under $[h, \gamma_0, x, -]\mathcal{B}$, where
\vspace*{-15pt}	
\[
\begin{array}{cc}
& \scriptstyle \highlightZ{4} \\
h \coloneqq \highlightF{(1,4)},
\quad \gamma_0 \coloneqq 1,
\quad x \coloneqq (3,4)
\quad \text{ and } \quad d \coloneqq & \bigl( (1,2,3,4) \bigr).
\end{array}
\]
Let $e \coloneqq [h, \gamma_0, x, d]\mathcal{B}$.
For all points $\gamma \in \Gamma
\setminus \supp(h) = \{2,3,5,6,7,8\}$, we obtain $[\gamma]e = 1_K$.
Next we compute the images of the points of $\supp(h) = \{1, 4\}$ under
$e$: $[1]e = x \cdot \left[1 ^ h\right]d^{-1} =
(3,4) \cdot (1,4,3,2) = (1,4,2)$ and $[4]e = [4]d = (1,2,3,4)$.
Then
\[
\begin{array}{r*{9}{c}}
[h, \gamma_0, x, d]\mathcal{B}
= \bigl(
& \overset{\highlightF{1}}{(1,4,2)},
& \overset{\highlightZ{2}}{()},
& \overset{\highlightZ{3}}{()},
& \overset{\highlightF{4}}{(1,2,3,4)},
& \overset{\highlightZ{5}}{()},
& \overset{\highlightZ{6}}{()},
& \overset{\highlightZ{7}}{()},
& \overset{\highlightZ{8}}{()}
&\bigr).
\end{array}
\]
In particular, 
\[\left([h, \gamma_0, x, d]\mathcal{B},h\right) =
((1,4,2),\,(),\,(),\,(1,2,3,4),\,(),\,(),\,(),\,()\,; \, (1,4))\]
is contained in $\{\,w\in \C(W,h) \,:\,
[\gamma_0,w]\Yade=x\}$.
\end{example}

Using the above lemma we can derive the proportion of wreath cycles with
non-trivial top-component
whose Yade in a given point lies in a given subset $P\subseteq K$.

\begin{cor}\label{probshiftcor}
Let $K$ be finite, $1\neq h\in H$ a single cycle, $\gamma \in \supp(h)$ and $P\subseteq K$.
Then \[\frac{\vert\{w\in \C(W,h) \,:\, 
[\gamma,w]\Yade \in P\}\vert}{\vert \C(W,h)\vert}=
\frac{\vert P\vert}{\vert K\vert}.\]
\end{cor}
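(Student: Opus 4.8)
The plan is to exploit Lemma \ref{wreath-cycle-fixed-yade-parameterisation} to show that the assignment sending a wreath cycle $w\in\C(W,h)$ to its yade $[\gamma,w]\Yade\in K$ has fibres all of equal size, so that its values are equidistributed over $K$, and then to read off the proportion by a counting argument.

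First I would apply Lemma \ref{wreath-cycle-fixed-yade-parameterisation} with $\gamma_0\coloneqq\gamma$, which is a legitimate choice since $h\neq 1_H$ is a single cycle and $\gamma\in\supp(h)$. For each $x\in K$ the lemma supplies a bijection between $K^{\supp(h)\setminus\{\gamma\}}$ and the fibre $\{w\in\C(W,h)~:~[\gamma,w]\Yade=x\}$. As $K$ is finite, this shows that each such fibre has cardinality $\vert K\vert^{\vert h\vert-1}$, crucially \emph{independently} of the value $x$.

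Next I would note that recording the yade at $\gamma$ partitions $\C(W,h)$ into these fibres, one for each $x\in K$:
\[
\C(W,h) = \bigcupdot_{x\in K} \{w\in\C(W,h)~:~[\gamma,w]\Yade=x\}.
\]
Summing the constant fibre size over all $x\in K$ yields $\vert\C(W,h)\vert=\vert K\vert\cdot\vert K\vert^{\vert h\vert-1}$ for the denominator, while restricting the union to $x\in P$ yields $\vert\{w\in\C(W,h)~:~[\gamma,w]\Yade\in P\}\vert=\vert P\vert\cdot\vert K\vert^{\vert h\vert-1}$ for the numerator. Forming the quotient, the common factor $\vert K\vert^{\vert h\vert-1}$ cancels and leaves exactly $\vert P\vert/\vert K\vert$, as claimed.

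There is no genuine obstacle here, since the entire substance has been front-loaded into Lemma \ref{wreath-cycle-fixed-yade-parameterisation}. The only point requiring a moment's attention is that the fibre sizes are truly independent of $x$ — which is precisely what the lemma guarantees — together with the verification that $\gamma$ is an admissible base point, which holds because $\gamma\in\supp(h)$ and $h$ is a non-trivial cycle.
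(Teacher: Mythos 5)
Your proposal is correct and follows essentially the same route as the paper: both decompose $\C(W,h)$ into the fibres of $w\mapsto[\gamma,w]\Yade$, invoke Lemma \ref{wreath-cycle-fixed-yade-parameterisation} to see each fibre has size $\vert K\vert^{\vert h\vert-1}$, and divide by $\vert\C(W,h)\vert=\vert K\vert^{\vert h\vert}$. The only cosmetic difference is that you derive $\vert\C(W,h)\vert=\vert K\vert^{\vert h\vert}$ from the partition over all of $K$, whereas the paper states it directly.
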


\begin{proof}
\begin{align*}\vert\{w\in \C(W,h) \,:\, [\gamma,w]\Yade \in P\}\vert
&=\big\vert\bigcupdot_{x\in P} \{w \in \C(W,h) \,:\, 
[\gamma,w]\Yade=x\}\big\vert\\
&=\sum_{x\in P}\vert \{w \in \C(W,h) \,:\, [\gamma,w]\Yade=x\}\vert
=\vert K\vert^{\vert h\vert-1}\vert P\vert.
\end{align*}
The result follows as $\vert \C(W,h) \vert =\vert K\vert^{\vert h\vert}$.
\end{proof}
We now turn towards parameterising conjugacy classes of arbitrary wreath
product elements.\\
We commence our investigation with the $W$-conjugacy class of a single
wreath cycle in $S$. Note that if the top component of a wreath cycle is the
identity we require an embedding of $\Gamma\times K$ into the base
component, which we achieve via the map $\mathcal{E}$ defined in the
following lemma, replacing the map $\mathcal{B}$
from Lemma \ref{wreath-cycle-fixed-yade-parameterisation}.
\begin{lemma}\label{lemma-conjugacyclass-wreathcycle}
Let $w=(f,h)\in S$ be a wreath cycle and $\gamma_0 \in \terr(w)$. Then, for
$h\neq 1_H$, the map
\[
h^H \times \left([\gamma_0, w]\Yade \right)^K \times K^{\terr(w)\setminus \{\gamma_0\}}
\xrightarrow{1:1}
w^W, (h^t, x, d) \mapsto \left(\left[h^t,\gamma_0^t, x,d^t\right]\mathcal{B}, h^t\right)
\]
and for $h=1_H$, the map
\[
\gamma_0^H \times \left([\gamma_0, w]\Yade\right)^K \xrightarrow{1:1}
w^W, (\gamma_0^t, x) \mapsto
([\gamma_0^t, x]\mathcal{E},1_H)
\]
are bijections into the conjugacy class of $w$ in $W$, where $\mathcal{B}$
is as in Lemma \ref{wreath-cycle-fixed-yade-parameterisation} and 
\[
\mathcal{E}\colon \Gamma\times K \hookrightarrow K^\Gamma, (\gamma', x)\mapsto
[\gamma',x]\mathcal{E}
\]
with \[[\gamma',x]\mathcal{E}\colon \Gamma\to K, \gamma \mapsto
\begin{cases}
x, &\text{ if } \gamma=\gamma'\\
1_K, &\text{ if } \gamma\neq \gamma'
\end{cases}\]
is an injection.
\end{lemma}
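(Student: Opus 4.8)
The plan is to treat the cases $h=1_H$ and $h\neq 1_H$ separately, since the auxiliary map $\mathcal{E}$ replaces $\mathcal{B}$ precisely in the degenerate case where the top component is trivial.

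For $h=1_H$ I would argue directly. Here $\terr(w)=\{\gamma_0\}$ and $[\gamma_0]f=[\gamma_0,w]\Yade\neq 1_K$. By the computation in the proof of Lemma \ref{conjugateInWreath}, every conjugate has the shape $w^{(s,t)}=((f^s)^t,1_H)$, a wreath cycle concentrated at $\gamma_0^t$ with value $([\gamma_0]f)^{[\gamma_0]s}$. As $(s,t)$ ranges over $W$, the point $\gamma_0^t$ ranges over $\gamma_0^H$ and, independently, $[\gamma_0]s$ ranges over all of $K$, so the value ranges over the full class $([\gamma_0,w]\Yade)^K$; this shows the map lands in $w^W$ and is onto. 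Injectivity is immediate, since $[\gamma',x]\mathcal{E}$ has its unique non-trivial value $x\neq 1_K$ at the single point $\gamma'$, so $(\gamma',x)$ is recovered from its image. No well-definedness issue arises because $[\gamma',x]\mathcal{E}$ depends only on the point $\gamma'=\gamma_0^t$, not on $t$ itself.

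For $h\neq 1_H$ the clean approach is to fibre the target over top components. Projection onto the top component gives a map $w^W\to h^H$ which, by Lemma \ref{conjugateInWreath} and Corollary \ref{cor:conjugate_terr}, is well-defined and onto, and every element of $w^W$ is a wreath cycle of load $\ld(w)$. I would first identify the fibre over a given $h'=h^t\in h^H$ (with $t\in H$) as exactly $\{v\in\C(W,h'):\ld(v)=\ld(w)\}$. The inclusion $\subseteq$ is Lemma \ref{conjugateInWreath}. The reverse inclusion is the crux: given such a $v$, Lemma \ref{lem:load_invariant} produces an $S$-conjugator, but I need a $W$-conjugator. The point is that the element constructed in the proof of Lemma \ref{lem:load_invariant} via Lemma \ref{le:conjugateCs} works for \emph{any} top component $t$ with $h^t=h'$; choosing $t\in H$ yields a conjugator $a=(s,t)\in W$, so $v\in w^W$. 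Having identified the fibre, I would parameterise it with Lemma \ref{wreath-cycle-fixed-yade-parameterisation}: fixing the base point $\gamma_0':=\gamma_0^t\in\supp(h')$ and writing $Y:=([\gamma_0,w]\Yade)^K$, the condition $\ld(v)=\ld(w)$ is equivalent to $[\gamma_0',v]\Yade\in Y$ (as $|h'|=|h|$ and $Y$ is a single $K$-class), so the fibre is the disjoint union over $x\in Y$ of the sets $\{v\in\C(W,h'):[\gamma_0',v]\Yade=x\}$. Lemma \ref{wreath-cycle-fixed-yade-parameterisation} makes each summand the bijective image of $K^{\supp(h')\setminus\{\gamma_0'\}}$ under $d'\mapsto([h',\gamma_0',x,d']\mathcal{B},h')$, giving a bijection $Y\times K^{\supp(h')\setminus\{\gamma_0'\}}\to(\text{fibre})$. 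Reindexing by the bijection $d\mapsto d^t$ from $K^{\terr(w)\setminus\{\gamma_0\}}$ onto $K^{(\terr(w)\setminus\{\gamma_0\})^t}=K^{\supp(h')\setminus\{\gamma_0'\}}$ turns this into $(x,d)\mapsto([h',\gamma_0^t,x,d^t]\mathcal{B},h')$, which is precisely the stated map restricted to $\{h'\}\times Y\times K^{\terr(w)\setminus\{\gamma_0\}}$. Since these fibres partition $w^W$ and the map is bijective on each, the global map is a bijection.

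I expect the main obstacle to be the distinction between $S$-conjugacy and $W$-conjugacy in the reverse inclusion above: everything hinges on realising the $S$-conjugator of Lemma \ref{lem:load_invariant} with top component in $H$, which is exactly why the domain feeds $h^t\in h^H$ rather than an arbitrary $\symg$-conjugate into the map. A secondary point I would make explicit is well-definedness: the formula uses $t$, whereas the first factor of the domain only records $h^t$. Two admissible choices of $t$ differ by an element of $C_H(h)$, which shifts $\gamma_0^t$ by a power of $h'$ while leaving $x$ fixed and so can change the resulting wreath cycle; one should therefore fix, once and for all, a representative $t=t(h')$ for each $h'\in h^H$. The resulting bijection depends on this choice, but its bijectivity does not, so the statement of the lemma is unaffected.
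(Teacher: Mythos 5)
Your proof is correct and follows essentially the same route as the paper's: both reduce membership in $w^W$ to the condition of being a wreath cycle with top component in $h^H$ and Yade class $\left([\gamma_0,w]\Yade\right)^K$ (you via Lemmas \ref{conjugateInWreath} and \ref{lem:load_invariant}, the paper by citing Theorem \ref{conjugationInWreathProd}, which is itself proved from those lemmas), and then parameterise the resulting set fibrewise over $h^H$ using Lemma \ref{wreath-cycle-fixed-yade-parameterisation} and the map $\mathcal{E}$. Your observation that the formula depends on the choice of $t$ representing $h^t$, so that a representative must be fixed for each element of $h^H$, is a valid point of care that the paper glosses over in this lemma and only addresses later by fixing a transversal of $C_H(h)$ in Theorem \ref{thm-cc}.
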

\begin{proof}
Let $v=(e,g)\in S$. Using Theorem \ref{conjugationInWreathProd},
$w$ is conjugate to $v$ in $W$ if and only if $v$ is a wreath cycle,
there exists a $t\in H$ with $h^t=g$ and $\P(w)^t = \P(v)$. As $w$ and $v$
are wreath cycles and the load of a wreath cycle is invariant under
conjugation we know that $\P(w)^t=\P(v)$ holds if and only if  
$\left([\gamma_0^t,v]\operatorname{Yade}\right)^K = \left([\gamma_0,
w]\Yade\right)^K$.

We now parameterise the  different elements $v\in S$ one can construct with these properties.
Suppose first $h\neq 1_H$. Then we need to choose an element $g=h^t\in h^H$ as a possible top
component for $v$ and an element $x \in \left([\gamma_0, w]\Yade\right)^K$
as the Yade of $v$ at the point $\gamma_0^t$.
By Lemma \ref{wreath-cycle-fixed-yade-parameterisation}, the possible
elements $v$ with the above requirements are parameterised by the following
bijection 
\[
K^{\terr(w)\setminus \{\gamma_0\}} \xrightarrow{1:1} \{v \in \C(S,g) \,:\,
[\gamma_0^t,v]\operatorname{Yade} = x\},\, d \mapsto \left(\left[h^t,\gamma_0^t, x,d^t\right]
\mathcal{B}, h^t\right),
\]
observing
$K^{\terr(w)\setminus\{\gamma_0\}}\xrightarrow{1:1}
K^{\supp\left(h^t\right)\setminus\left\{\gamma_0^t\right\}},\, d\mapsto d^t$ is a bijection.\\
Now let $h=1_H$. Then the top component of $v$ must equal $1_H$. Since $v$
is a wreath cycle, we must have $\vert \terr(v) \vert = 1$. For the
territory of $v$, we need to choose $\gamma_0^t \in \gamma_0^H$ and $x = [\gamma_0^t]e =
[\gamma_0^t,v]\operatorname{Yade} \in \left( [\gamma_0, w]\Yade\right)^K$. These choices fix the element $v$, which must equal $([\gamma_0^t, x]\mathcal{E}, 1_H)$.
\end{proof}
We now parameterise the $W$-conjugacy class of arbitrary wreath product
elements $w\in S$. Recall the definition of $\C(w)$ and $\C^{*}(w)$ from
Definition \ref{def:Cset}.
\begin{theorem}\label{thm-cc}
Let $w=(f,h)\in S$ be an arbitrary wreath
product element and for each $z\in \C(w)$ 
choose $\gamma_z \in \terr(z)$.
Fix a transversal $\{ t_1,\dots, t_m \}$ of the right cosets of $C_H(h)$ in $H$.\\
Then the $W$-conjugacy class of $w$ is parameterised by the following bijection
\begin{align*}
&h^H \times \mathcal{P}(w)^{C_H(h)}
\times \bigtimes_{z\in \mathcal{C}(w)} 
\left([\gamma_z,z]\operatorname{Yade}\right)^K
\times \bigtimes_{z\in \mathcal{C^{*}}(w)}
K^{ \supp(h_z)\setminus \{\gamma_z\}}
\xrightarrow{1:1} w^W,\\
&(h^{t_a}, \mathcal{P}(w)^c, x,d) \mapsto \prod_{z\in \mathcal{C}(w)\setminus \mathcal{C^{*}}(w)}
\left([\gamma_{z}^{b_a},x_z]\mathcal{E},1_H
\right) \cdot \prod_{z\in \C^{*}(w)}
\left([h_z^{b_a}, \gamma_z^{b_a}, x_z, d_z^{b_a}
]\mathcal{B},h_z^{b_a}\right),
\end{align*}
where $b_a\coloneqq c\cdot t_a$.
\end{theorem}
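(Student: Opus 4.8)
\emph{Approach.} The plan is to reduce everything to the single-cycle parameterisations of Lemmas~\ref{wreath-cycle-fixed-yade-parameterisation} and~\ref{lemma-conjugacyclass-wreathcycle} together with the conjugacy criterion of Theorem~\ref{conjugationInWreathProd}. Since $w\in S$ and $W\le S$, the class $w^W$ lies in $S$, and by Theorem~\ref{conjugationInWreathProd} an element $v=(e,g)\in S$ lies in $w^W$ precisely when there is a $t\in H$ with $h^t=g$ and $\mathcal{P}(w)^t=\mathcal{P}(v)$. Before anything else I would fix, for each $\rho$ in the orbit $\mathcal{P}(w)^{C_H(h)}$, one element $c\in C_H(h)$ with $\mathcal{P}(w)^c=\rho$. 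This choice is genuinely needed: the displayed formula uses $c$ explicitly, and two representatives differing by an element of $\mathrm{Stab}_{C_H(h)}(\mathcal{P}(w))$ permute the cycles of equal load among their target territories, which would change the image.

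\emph{The map lands in $w^W$.} Fix an input $(h^{t_a},\mathcal{P}(w)^c,x,d)$ and put $t\coloneqq c\,t_a$. By Lemmas~\ref{wreath-cycle-fixed-yade-parameterisation} and~\ref{lemma-conjugacyclass-wreathcycle} each factor is a wreath cycle: a star cycle $z$ contributes a factor with top component $h_z^{t}$ and territory $\supp(h_z)^{t}=\terr(z)^{t}$, while a trivial-top cycle $z$ contributes a factor with territory $\{\gamma_z^{t}\}=\terr(z)^{t}$. As $\mathcal{P}(w)^{t}$ is a territory decomposition its blocks are pairwise disjoint, so the factors are pairwise disjoint and commute by Lemma~\ref{disjointElemCommute}. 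Hence their product $v$ is already in wreath cycle decomposition, with top component $\big(\prod_{z\in\C^*(w)}h_z\big)^{t}=h^{t}=h^{t_a}$ (using $c\in C_H(h)$) and with block $\{\terr(z)^{t}:z\in\C(L,w)\}$ for each load $L$, i.e.\ territory decomposition $\mathcal{P}(w)^{t}$. Because each chosen $x_z$ lies in the yade class of $z$, every factor has the same load as the corresponding $z$, so $\L(v)=\L(w)$ and indeed $\mathcal{P}(v)=\mathcal{P}(w)^{t}$; Theorem~\ref{conjugationInWreathProd} then yields $v\in w^W$.

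\emph{Bijectivity.} I would establish surjectivity and injectivity at once by writing down the inverse. Given $v=(e,g)\in w^W$, Theorem~\ref{conjugationInWreathProd} provides $t\in H$ with $h^t=g$ and $\mathcal{P}(w)^t=\mathcal{P}(v)$. The top component $g=h^{t_a}$ determines $a$ uniquely, since the $h^{t_a}$ are pairwise distinct across the transversal; then $c\coloneqq t\,t_a^{-1}\in C_H(h)$ and $\rho\coloneqq\mathcal{P}(v)^{t_a^{-1}}=\mathcal{P}(w)^{c}\in\mathcal{P}(w)^{C_H(h)}$, which fixes the second coordinate and with it the representative attached to $\rho$. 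By uniqueness of the wreath cycle decomposition (Theorem~\ref{thm:wreathcycle-decomposition}), the block $\terr(z)^{t}$ of $\mathcal{P}(v)$ singles out the unique cycle $v_z$ of $v$ having that territory; I read off $x_z\coloneqq[\gamma_z^{t},v_z]\Yade$, which lies in $([\gamma_z,z]\Yade)^K$ since conjugate wreath cycles share a load (Lemma~\ref{conjugateInWreath}), and, for star cycles, recover $d_z$ via the inverse of the bijection in Lemma~\ref{wreath-cycle-fixed-yade-parameterisation}. These reconstructions are forced and reassemble $v$, so the map is both onto and one-to-one.

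\emph{Main obstacle.} The substantive difficulty is not any single calculation but the bookkeeping that realises the global map as a product of the per-cycle bijections while keeping the coset-and-orbit data consistent. The two delicate points are (i) the fixed choice of orbit representatives $c$, without which the formula is not even well-defined; and (ii) matching each cycle of $v$ to its factor $z$ purely through territories, which rests on identifying wreath cycles with their territories (as in the proof of Theorem~\ref{conjugationInWreathProd}) together with uniqueness of the wreath cycle decomposition. Once these are in place, each coordinate inversion is a direct appeal to the cited lemmas.
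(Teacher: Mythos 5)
Your proposal is correct and follows essentially the same route as the paper: first identify $h^H\times\mathcal{P}(w)^{C_H(h)}$ with the set of pairs $\left(h^t,\mathcal{P}(w)^t\right)$ via the coset decomposition $t=ct_a$, then, for a fixed such pair, parameterise the admissible elements cycle-by-cycle through the maps $\mathcal{E}$ and $\mathcal{B}$ exactly as in Lemma \ref{lemma-conjugacyclass-wreathcycle}, invoking Theorem \ref{conjugationInWreathProd} for membership in $w^W$. Your explicit remark that one must fix an orbit representative $c$ for each element of $\mathcal{P}(w)^{C_H(h)}$ (since the displayed formula depends on $c$, not only on $\mathcal{P}(w)^c$) is a point the paper leaves implicit, and your reconstruction of the inverse is a more detailed version of the paper's appeal to the uniqueness of the wreath cycle decomposition.
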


\begin{proof}
Let $v=(e,g)\in W$. By Theorem \ref{conjugateInWreath}, $w$ and $v$ are
conjugate in $W$ if and only if there exists a $t\in H$ with $g=h^t$ and 
$\mathcal{P}(v)=\mathcal{P}(w)^t$. We first claim that 
\[h^H\times \mathcal{P}(w)^{C_H(h)}\xrightarrow{1:1}\left\{\,
\left(h^t, \mathcal{P}(w)^t\right)\,:\, t\in H\right\},\,
(h^{t_a}, \mathcal{P}(w)^c) \mapsto (h^{ct_a}, \mathcal{P}(w)^{ct_a})\] is
a bijection and first
show injectivity. Fix $1\leq a,b\leq m$ and $c,c'\in C_H(h)$ and assume 
$(h^{ct_a}, \mathcal{P}(w)^{ct_a}) = (h^{c't_b}, \mathcal{P}(w)^{c't_b})$. Then $t_a=t_b$ as
they are representatives of right cosets of $C_H(h)$ in $H$ and hence
$\mathcal{P}(w)^{c} = \mathcal{P}(w)^{c'}$. Surjectivity follows as, for an arbitrary
$t\in H$, there exists $1\leq a \leq m$ and $c\in C_H(h)$ with $t=ct_a$.\\
Now fix $(h^t, \mathcal{P}(w)^t)$ for some $t\in H$. In order to parameterise all
elements $x$ of $W$ with top component $h^t$ and territory decomposition
$\mathcal{P}(w)^t$, we consider a wreath cycle decomposition for each such
element $x = x_1\cdots x_\ell\in W$. Note that
for each wreath cycle $x_i\in S$ in such a decomposition, its load, top component and territory are fixed by
our hypothesis. Thus for each $x_i$, we only need to
consider its base component. By using the maps $\mathcal{E}$ and
$\mathcal{B}$ one proceeds as in Lemma
\ref{lemma-conjugacyclass-wreathcycle}.
\end{proof}

\begin{example}\label{ex:conjugacy}
We use the notation from Example \ref{Example-Wreath-Product-Conjugacy-Problem}
and highlight wreath cycle decompositions by colouring points of $\Gamma$.
We computed a wreath cycle decomposition of $w$ in
Example \ref{Example-Wreath-Product-Wreath-Cycle-Decomp} and $\mathcal{P}(w)$ in 
Example \ref{Example-Wreath-Product-Territory-Partition} using the same colours.
Consider the element $w = (f,h) \in S$
\[
\arraycolsep=0.25em\def\arraystretch{2.2}
\begin{array}{l*{10}{c}}
w \coloneqq \bigl(
& \overset{\highlightB{1}}{(1,2)(3,4)},
& \overset{\highlightB{2}}{(3,4)},
& \overset{\highlightC{3}}{()},
& \overset{\highlightC{4}}{(1,2)},
& \overset{\highlightD{5}}{(1,2,3)},
& \overset{\highlightD{6}}{()},
& \overset{\highlightA{7}}{(1,2)},
& \overset{\highlightZ{8}}{()};
& \overset{\text{top}}{(\highlightB{1,2})(\highlightC{3,4})(\highlightD{5,6})}
&\bigr)
\end{array}
\]
with
\[
\begin{blockarray}{*{10}{c}}
& \scriptstyle \highlightZ{1} & \scriptstyle \highlightZ{2} & \scriptstyle \highlightZ{3} & \scriptstyle \highlightZ{4} & \scriptstyle \highlightZ{5} & \scriptstyle \highlightZ{6} & \scriptstyle \highlightZ{7} & \scriptstyle \highlightZ{8} & \\
\begin{block}{l[*{8}{c}]>{\scriptstyle}r}
& \zero & \zero & \zero & \zero & \zero & \zero & \zero & \zero & \highlightZ{k_1}\\
& \{\highlightA{7}\} & \{\highlightB{1,2}\}, \{\highlightC{3,4}\} & \zero & \zero & \zero & \zero & \zero & \zero & \highlightZ{k_2} \\
\mathcal{P}(w) = & \zero & \zero & \zero & \zero & \zero & \zero & \zero & \zero & \highlightZ{k_3} \\
& \zero & \{\highlightD{5,6}\} & \zero & \zero & \zero & \zero & \zero & \zero & \highlightZ{k_4} \\
& \zero & \zero & \zero & \zero & \zero & \zero & \zero & \zero & \highlightZ{k_5} \\
\end{block}
\end{blockarray}.
\]
Using this colouring to encode the wreath cycles, we write $w$ in a
disjoint wreath cycle decomposition as in Equation
\ref{notation:wr_decomp}, where $w_{i,j,\ell}$ denotes the $\ell$-th wreath cycle
of load $(k_i^K,j)$:
\[
w = \highlightA{w_{2,1,1}} \cdot \highlightB{w_{2,2,1}} \cdot \highlightC{w_{2,2,2}} \cdot \highlightD{w_{4,2,1}}
\]
Further let $w_{i,j,k} = (f_{i,j,k}, h_{i,j,k})$ and
fix points in the territory of each wreath cycle:
\[
\highlightA{\gamma_{2,1,1}} \coloneqq \highlightA{7},\quad
\highlightB{\gamma_{2,2,1}} \coloneqq \highlightB{1},\quad
\highlightC{\gamma_{2,2,2}} \coloneqq \highlightC{3}, \quad
\highlightD{\gamma_{4,2,1}} \coloneqq \highlightD{5}.
\]

Recall the three different top groups:
\[
\begin{array}{lcl}
H_1 \coloneqq& \langle ~ (1,2)(3,4),~ (1,2,3,4),~ (5,6),~ (7,8) ~ \rangle & \cong D_8 \times C_2 \times C_2, \\
H_2 \coloneqq& \langle ~ (1,2)(3,4)(5,6),~ (3,5)(4,6)(7,8) ~ \rangle & \cong C_2 \times C_2, \\
H_3 \coloneqq& \langle ~ (1,2)(3,4)(5,6),~ (7,8) ~ \rangle & \cong C_2 \times C_2
\end{array}
\]
Using the computations from Example
\ref{Example-Wreath-Product-Conjugacy-Problem} 
we compute the cardinality of the conjugacy class $w^{W_i}$, where $W_i
\coloneqq K \wr H_i$ for $1 \leq i \leq 3$.
First note that two factors of the cartesian product occurring in the source of the
bijection defined in Theorem \ref{thm-cc} do not depend on the chosen top group, namely
{\footnotesize
\begin{align*}
&\left\vert
\left([\highlightA{7}, \highlightA{w_{2,1,1}}]\Yade\right)^K \times
\left([\highlightB{1}, \highlightB{w_{2,2,1}}]\Yade\right)^K \times
\left([\highlightC{3}, \highlightC{w_{2,2,2}}]\Yade\right)^K \times
\left([\highlightD{5}, \highlightD{w_{4,2,1}}]\Yade\right)^K
\right\vert \\
&= \left\vert
k_2^K \times k_2^K \times k_2^K \times k_4^K
\right\vert = 6^3 \cdot 8 = 1,728
\end{align*}
}%
and
\begin{align*}
\left\vert
K^{\{\highlightB{1,2}\} \setminus \{\highlightB{1}\}} \times
K^{\{\highlightC{3,4}\} \setminus \{\highlightC{3}\}} \times
K^{\{\highlightD{5,6}\} \setminus \{\highlightD{5}\}}
\right\vert = 24^3 = 13,824.
\end{align*}
We have
\begin{align*}
\left\vert w^{W_1} \right\vert &= \left\vert h^{H_1} \right\vert \cdot \left\vert \mathcal{P}^{C_{H_1}(h)} \right\vert \cdot 1,728 \cdot 13,824 = 2 \cdot 2 \cdot 1,728 \cdot 13,824 = 95,551,488\;, \\
\left\vert w^{W_2} \right\vert &= \left\vert h^{H_2} \right\vert \cdot \left\vert \mathcal{P}^{C_{H_2}(h)} \right\vert \cdot 1,728 \cdot 13,824 = 1 \cdot 2 \cdot 1,728 \cdot 13,824 = 47,775,744\;, \\
\left\vert w^{W_3} \right\vert &= \left\vert w^{W_2} \right\vert\;.
\end{align*}
Let $\Phi$ be the bijection from Theorem \ref{thm-cc} for the wreath
product $W_1$, where we fix the transversal $\{t_1 \coloneqq (), t_2
\coloneqq (2,4)\}$ of the right cosets of $C_{H_1}(h)$ in $H_1$.
We choose an element from the domain of $\Phi$ using the same notation as
in the above Theorem:
\[
t_a \coloneqq t_2, \quad c \coloneqq (7,8) \in C_{H_1}(h),
\]
\[
\begin{blockarray}{*{10}{c}}
& \scriptstyle \highlightZ{1} & \scriptstyle \highlightZ{2} & \scriptstyle \highlightZ{3} & \scriptstyle \highlightZ{4} & \scriptstyle \highlightZ{5} & \scriptstyle \highlightZ{6} & \scriptstyle \highlightZ{7} & \scriptstyle \highlightZ{8} & \\
\begin{block}{l[*{8}{c}]>{\scriptstyle}r}
& \zero & \zero & \zero & \zero & \zero & \zero & \zero & \zero & \highlightZ{k_1}\\
& \highlightA{(2,4)} & \highlightB{(3,4)}, \highlightC{(1,4)} & \zero & \zero & \zero & \zero & \zero & \zero & \highlightZ{k_2} \\
x = & \zero & \zero & \zero & \zero & \zero & \zero & \zero & \zero & \highlightZ{k_3} \\
& \zero & \highlightD{(1,4,3)} & \zero & \zero & \zero & \zero & \zero & \zero & \highlightZ{k_4} \\
& \zero & \zero & \zero & \zero & \zero & \zero & \zero & \zero & \highlightZ{k_5} \\
\end{block}
\end{blockarray}
\]
\[
\begin{blockarray}{*{9}{c}}
& \scriptstyle \highlightZ{2} & \scriptstyle \highlightZ{3} & \scriptstyle \highlightZ{4} & \scriptstyle \highlightZ{5} & \scriptstyle \highlightZ{6} & \scriptstyle \highlightZ{7} & \scriptstyle \highlightZ{8} & \\
\begin{block}{l[*{7}{c}]>{\scriptstyle}r}
& \zero & \zero & \zero & \zero & \zero & \zero & \zero & \highlightZ{k_1}\\
& \begin{array}{c}
	\scriptstyle \highlightB{2} \\
	\bigl( \highlightB{(1,2,3,4)} \bigr),
\end{array}
\begin{array}{c}
	\scriptstyle \highlightC{4} \\
	\bigl( \highlightC{(1,4,3)} \bigr)
\end{array}
& \zero & \zero & \zero & \zero & \zero & \zero & \highlightZ{k_2} \\
d = & \zero & \zero & \zero & \zero & \zero & \zero & \zero & \highlightZ{k_3} \\
& \begin{array}{c}
	\scriptstyle \highlightD{6} \\
	\bigl( \highlightD{(2,3,1)} \bigr)
\end{array}
& \zero & \zero & \zero & \zero & \zero & \zero & \highlightZ{k_4} \\
& \zero & \zero & \zero & \zero & \zero & \zero & \zero & \highlightZ{k_5} \\
\end{block}
\end{blockarray}.
\]
We compute the image of $(h^{t_a}, c, x, d)$ under $\Phi$ by computing each non-trivial factor in the product separately:
\[
\arraycolsep=0.15em\def\arraystretch{1.8}
\begin{array}{r*{10}{c}}
\highlightE{u_{2,1,1}} = \bigl(&
\multicolumn{8}{c}{
[\highlightA{\gamma_{2,1,1}}^{ct_a}, \highlightA{x_{2,1,1}}]\mathcal{E},}
& ()
& \bigr)\\[10pt]
= \bigl(
& \overset{\highlightZ{1}}{()},
& \overset{\highlightZ{2}}{()},
& \overset{\highlightZ{3}}{()},
& \overset{\highlightZ{4}}{()},
& \overset{\highlightZ{5}}{()},
& \overset{\highlightZ{6}}{()},
& \overset{\highlightZ{7}}{()},
& \overset{\highlightE{8}}{(2,4)};
& \overset{\text{top}}{()}
&\bigr),\\[10pt]
\highlightF{u_{2,2,1}} = \bigl(&
\multicolumn{8}{c}{
[\highlightB{h_{2,2,1}}^{ct_a}, \highlightB{\gamma_{2,2,1}}^{ct_a}, \highlightB{x_{2,2,1}}, \highlightB{d_{2,2,1}}^{ct_a}]\mathcal{B},}
& \highlightB{h_{2,2,1}}^{ct_a}
& \bigr)\\[10pt]
= \bigl(
& \overset{\highlightF{1}}{(1,4,2)},
& \overset{\highlightZ{2}}{()},
& \overset{\highlightZ{3}}{()},
& \overset{\highlightF{4}}{(1,2,3,4)},
& \overset{\highlightZ{5}}{()},
& \overset{\highlightZ{6}}{()},
& \overset{\highlightZ{7}}{()},
& \overset{\highlightZ{8}}{()};
& \overset{\text{top}}{\highlightF{(1,4)}}
&\bigr),\\[10pt]
\highlightG{u_{2,2,2}} = \bigl(&
\multicolumn{8}{c}{
[\highlightC{h_{2,2,2}}^{ct_a}, \highlightC{\gamma_{2,2,2}}^{ct_a}, \highlightC{x_{2,2,2}}, \highlightC{d_{2,2,2}}^{ct_a}]\mathcal{B},}
& \highlightC{h_{2,2,2}}^{ct_a}
& \bigr)\\[10pt]
= \bigl(
& \overset{\highlightZ{1}}{()},
& \overset{\highlightG{2}}{(1,4,3)},
& \overset{\highlightG{3}}{(3,4)},
& \overset{\highlightZ{4}}{()},
& \overset{\highlightZ{5}}{()},
& \overset{\highlightZ{6}}{()},
& \overset{\highlightZ{7}}{()},
& \overset{\highlightZ{8}}{()};
& \overset{\text{top}}{\highlightG{(2,3)}}
&\bigr),\\[10pt]
\highlightH{u_{4,2,1}} = \bigl(&
\multicolumn{8}{c}{
[\highlightD{h_{4,2,1}}^{ct_a}, \highlightD{\gamma_{4,2,1}}^{ct_a}, \highlightD{x_{4,2,1}}, \highlightD{d_{4,2,1}}^{ct_a}]\mathcal{B},}
& \highlightD{h_{4,2,1}}^{ct_a}
& \bigr)\\[10pt]
= \bigl(
& \overset{\highlightZ{1}}{()},
& \overset{\highlightZ{2}}{()},
& \overset{\highlightZ{3}}{()},
& \overset{\highlightZ{4}}{()},
& \overset{\highlightH{5}}{(1,4,2)},
& \overset{\highlightH{6}}{(1,2,3)},
& \overset{\highlightZ{7}}{()},
& \overset{\highlightZ{8}}{()};
& \overset{\text{top}}{\highlightH{(5,6)}}
&\bigr).
\end{array}
\]
Then $u \coloneqq (e,g) =
\highlightE{u_{2,1,1}} \cdot
\highlightF{u_{2,2,1}} \cdot
\highlightG{u_{2,2,2}} \cdot
\highlightH{u_{4,2,1}}
\in w^{W_1}$ is a wreath cycle decomposition of
$u = \left[h^{t_a}, c, x, d\right]\Phi$, where $u_{i,j,\ell}$ denotes the
$\ell$-th wreath cycle of load $(k_i^K,j)$.
Note that by construction we have $h^{ct_a} = g$ and $\mathcal{P}(w)^{ct_a} = \mathcal{P}(u)$.
\end{example}
In the following theorem we construct representatives of all conjugacy
classes of elements of $W$. For this, we need to consider orbits under the
natural action of $C_H(h)$ on
$\{\P(w)\, : \, w=(f,h)\in S\}$ for a fixed $h\in H$.  Note that as $K$ need not be
finite, the index set $I$ in the theorem below need not be finite either.
\begin{theorem}
Let $h \in H$ and define $\Omega \coloneqq \{ \mathcal{P}(w) \,:\, w=(f,h)\in S \}$
as the set of all territory decompositions of elements of $S$ with fixed top
component $h$. Fix a set of representatives
$\{\P(w_i) \,:\, i\in I\}$ of the  orbits under the action of $C_H(h)$ on
$\Omega$, where $I$ is some index set. Denote by $\gamma_z$ the minimum of $\terr(z)$
with respect to a fixed total ordering on $\Gamma$. Then the map
\[
\varphi_h: \{\, \P(w_i) \,:\, i\in I\,\} \hookrightarrow W, \,
\P(w)\mapsto \left( \prod_{z\in \C(w)} 
\left(\left[\gamma_z,\, [\gamma_z,z]\Yade\right]\mathcal{E},1_H\right)\right)\cdot
(1_{K^\Gamma},h),
\]
is an injective map and $\im(\varphi_h)$ consists of a system of
representatives of conjugacy classes of elements of $W$ whose top component
is conjugate to $h$ in $H$.\\
In particular, a system of representatives of $W$-conjugacy classes is
given by 
\[
\mathcal{R}(W) = \bigcupdot_{x\in \mathcal{R}(H)} \im(\varphi_x),
\]
where the union ranges over a system of representatives $\mathcal{R}(H)$ 
of $H$-conjugacy classes.
\end{theorem}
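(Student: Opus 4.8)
The plan is to reduce everything to one reconstruction statement and then feed it into Theorem~\ref{conjugationInWreathProd}. First I would check that $C_H(h)$ really acts on $\Omega$, so that the orbit representatives $\{\mathcal{P}(w_i):i\in I\}$ are meaningful: for $c\in C_H(h)$ and $w=(f,h)\in S$ Corollary~\ref{conjOfTerrDecomp} gives $\mathcal{P}(w)^c=\mathcal{P}(w^{(1_{K^\Gamma},c)})$, and since $w^{(1_{K^\Gamma},c)}$ has top component $h^c=h$ this again lies in $\Omega$; thus $\Omega$ is $C_H(h)$-invariant and the action is well defined.

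The crux is the claim that $\mathcal{P}\circ\varphi_h$ is the identity: for every $w=(f,h)\in S$ the element $u:=\varphi_h(\mathcal{P}(w))$ has top component $h$ and $\mathcal{P}(u)=\mathcal{P}(w)$. Writing $u=(f',h)$, the factors $([\gamma_z,[\gamma_z,z]\Yade]\mathcal{E},1_H)$ have trivial top and pairwise disjoint support (the minima $\gamma_z$ of the disjoint territories $\terr(z)$ are distinct), so $f'$ is supported on $\{\gamma_z:z\in\C(w)\}$ with $[\gamma_z]f'=[\gamma_z,z]\Yade$, and the trailing factor $(1_{K^\Gamma},h)$ only reinstates the top component. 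I would then read off the wreath cycle decomposition of $(f',h)$ via Theorem~\ref{thm:wreathcycle-decomposition}: its cycles with non-trivial top are $(\myrestriction{f'}{\supp(h_i)},h_i)$ for the cycles $h_i$ of $h$, and its cycles with trivial top are the singletons $\gamma_z\in\fix(h)$. For each $h_i$ the matching territory slot of $\mathcal{P}(w)$ is exactly $\supp(h_i)$ with minimum $\gamma_z$, and since $f'$ is trivial on $\supp(h_i)\setminus\{\gamma_z\}$ the Yade of $(\myrestriction{f'}{\supp(h_i)},h_i)$ at $\gamma_z$ collapses to $[\gamma_z,z]\Yade$, which lies in the intended Yade-class; each singleton slot is reproduced verbatim. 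As $\mathcal{P}$ records only Yade-classes, territories and cycle orders, this shows $\mathcal{P}(u)=\mathcal{P}(w)$ load by load, and hence $u\in w^W$ by Theorem~\ref{conjugationInWreathProd} with $t=1_H$.

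Everything else is formal. Since $\mathcal{P}\circ\varphi_h$ is the identity and distinct orbit representatives are distinct elements of $\Omega$, $\varphi_h$ is injective. For two images $u_i,u_j$, both with top component $h$, Theorem~\ref{conjugationInWreathProd} says they are $W$-conjugate precisely when some $t\in C_H(h)$ satisfies $\mathcal{P}(w_i)^t=\mathcal{P}(w_j)$, i.e.\ when $\mathcal{P}(w_i)$ and $\mathcal{P}(w_j)$ share a $C_H(h)$-orbit; as these are distinct orbit representatives, distinct indices give non-conjugate elements. For surjectivity onto the classes whose top component is conjugate to $h$, given such a $v=(e,g)$ I would choose $r\in H$ with $g^r=h$, so that $v^{(1_{K^\Gamma},r)}$ has top component $h$ and $\mathcal{P}(v^{(1_{K^\Gamma},r)})\in\Omega$; this lies in the orbit of some $\mathcal{P}(w_i)$, whence Theorem~\ref{conjugationInWreathProd} gives $v\sim v^{(1_{K^\Gamma},r)}\sim u_i$. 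Hence $\im(\varphi_h)$ is a system of representatives of exactly the $W$-classes with top component conjugate to $h$.

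For the description of $\mathcal{R}(W)$ I would observe that conjugating by $(s,t)\in W$ conjugates the top component by $t$ in $H$, so every $W$-class determines a unique $x\in\mathcal{R}(H)$, namely the representative of the $H$-class of its top components. Applying the previous paragraph to each $x$, the sets $\im(\varphi_x)$ jointly hit every $W$-class, and they are pairwise disjoint because their elements have constant, hence for distinct $x$ different, top components; this yields $\mathcal{R}(W)=\bigcupdot_{x\in\mathcal{R}(H)}\im(\varphi_x)$. The single genuinely technical point is the reconstruction claim of the second paragraph: one must verify, cycle by cycle, that pushing each Yade onto the minimal point of its territory and reattaching $h$ alters neither any territory nor any Yade-class, so that the recovered decomposition is moreover insensitive to which representative value is placed at $\gamma_z$. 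The remaining steps are bookkeeping layered on top of Theorem~\ref{conjugationInWreathProd}.
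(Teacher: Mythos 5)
Your proposal is correct and follows essentially the same route as the paper: reduce everything to Theorem~\ref{conjugationInWreathProd}, observe that $\varphi_h(\mathcal{P}(w))$ has top component $h$ and the same territory decomposition as $w$, deduce non-conjugacy of distinct orbit representatives from the centraliser-orbit criterion, and obtain surjectivity by first conjugating the top component to $h$. The only difference is that you spell out the reconstruction claim $\mathcal{P}(\varphi_h(\mathcal{P}(w)))=\mathcal{P}(w)$ cycle by cycle, which the paper simply asserts.
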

\begin{proof}
Note that for $i \in I$ the elements $w_i$ and $v_i \coloneqq [\P(w_i)]\varphi_h$ are conjugate in $W$ by Theorem \ref{conjugationInWreathProd},
since $\P(w_i) = \P(v_i)$ and the top components of $w_i$ and $v_i$ are identical.
We first prove that elements of $\im(\varphi_h)$ for fixed $h\in H$ are
non-conjugate.
Let $i\neq j \in I$. By Theorem \ref{conjugationInWreathProd} $w_i$ and
$w_j$ are conjugate in $W$ if and only if there exists a $t\in H$ with $h^t=h$ and
$\mathcal{P}(w_i)^t=\mathcal{P}(w_j)$, so $t\in C_H(h)$. As
$\mathcal{P}(w_i)$ and $\mathcal{P}(w_j)$ are in
different centraliser orbits we conclude that $w_i$ and $w_j$ are not conjugate
in $W$.\\
Let $(f,h^t)\in W$ for some $t\in H$.
We now show that $(f,h^t)$ is conjugate to an
element in $\im(\varphi_h)$. Note
$(f,h^t)^{(1_{K^\Gamma},t^{-1})} = (e,h)$ for a suitable $e\in K^\Gamma$,
so it suffices to show that $(e,h)$ is conjugate to an element of
$\im(\varphi_h)$. There exists an $i\in I$ and a $c\in
C_H(h)$ with $\mathcal{P}((e,h)) = \P(w_i)^c$, hence $[\P(w_i)]\varphi_h$ is conjugate to
$(e,h)$ in $W$.
\end{proof}
\begin{example} \label{Example-Conjugacy-Representatives}
Recall the groups $H_i$ and $W_i$ from Example \ref{ex:conjugacy}.
The number of conjugacy classes of $H_i$ for $i=1,2,3$ is 
\[
\left\vert \mathcal{R}(H_1) \right\vert	= 20,\,
\left\vert \mathcal{R}(H_2) \right\vert	= 4,\,
\left\vert \mathcal{R}(H_3) \right\vert	= 4
\]
and using the above theorem we compute
\[
\left\vert \mathcal{R}(W_1) \right\vert = 92\,000, \,
\left\vert \mathcal{R}(W_2) \right\vert = 103\,000 \text{ and }
\left\vert \mathcal{R}(W_3) \right\vert = 160\,000.
\]
As an example we demonstrate how this is done for $W_2$.
We first choose a set $\mathcal{R}(H_2)$ 
of representatives of the $H_2$-conjugacy classes as
\[
\{x_1 \coloneqq (),\; x_2 \coloneqq (3,5)(4,6)(7,8),\;
x_3 \coloneqq (1,2)(3,4)(5,6),\; x_4 \coloneqq (1,2)(3,6)(4,5)(7,8)\}.
\]
For each element $x_i \in \mathcal{R}(H_2)$, we 
compute the images under $\varphi_{x_i}$:
\[
\left\vert \im(\varphi_{x_1}) \right\vert = 99\,375,\,
\left\vert \im(\varphi_{x_2}) \right\vert = 1\,625,\,
\left\vert \im(\varphi_{x_3}) \right\vert = 1\,625,\,
\left\vert \im(\varphi_{x_4}) \right\vert = 375.
\]
In particular, these computations show
that there are $99\,375$ conjugacy classes in $W_2$ 
whose elements have trivial top component.
\end{example}
\section{Centralisers in wreath products}
Recall the setting from Hypothesis \ref{setting}.
We first introduce the notion of sparse wreath cycles. These are wreath
cycles with at most one non-trivial base component. It turns out that every
wreath cycle is conjugate in $K^\Gamma\times
\langle 1_H \rangle$ to a sparse wreath cycle 
and we show that one can conjugate every wreath product
element into a sparse wreath cycle decomposition, see Corollary \ref{cor-sparse}.
We use this to parameterise the $W$-centraliser of a product of sparse
wreath cycles which, after conjugation, then parameterises the $W$-centraliser
of an arbitrary wreath product element. 
The structure of $C_S(w)$ for the full monomial group $S=K\wrg \symg$
is described in \cite[Theorem 8]{Ore}.
\begin{definition}
Let $w=(f,h)\in W$ be a
wreath cycle. We call $w$ a \emph{sparse wreath cycle} if there exists a
$\gamma_0 \in \Gamma$ such that $[\gamma]f=1_K$ for all $\gamma\in \Gamma \setminus
\{\gamma_0\}$.
\end{definition}
The concept of sparse wreath cycles is described in Ore \cite[Theorem 7]{Ore}.\\
Note that in a disjoint wreath cycle decomposition of an element $w\in W$ in Theorem
\ref{thm:wreathcycle-decomposition} we have
\[w = (f,h) =
\prod_{i=1}^\ell(\myrestriction{f}{\supp(h_i)},h_i) \quad \cdot \!
\prod_{\gamma \in \fix(h) \cap \terr(w)} (\myrestriction{f}{\gamma}, 1_H)
\in W\]
and the factors
$(\myrestriction{f}{\gamma}, 1_H)$ are sparse wreath cycles for all $\gamma
\in \fix(h)\cap \terr(w)$.

The following corollary shows that every wreath cycle is conjugate to a
sparse wreath cycle and that one can write a $K^\Gamma \rtimes \langle 1_H
\rangle$-conjugate of any wreath
product element as a product of sparse wreath cycles. The following
is a corollary of Theorem \ref{conjugationInWreathProd}.
\begin{cor}\label{cor-sparse}
Let $w=w_1\cdots w_\ell \in S$ be a disjoint wreath cycle decomposition of $w$. Then
there exists an $a\in K^\Gamma\rtimes \langle 1_H\rangle$ such that $w^a =
w_1^a\cdots w_\ell^a$ and $w_i^a$ is a sparse wreath cycle for all $1\leq i
\leq \ell$. This is called a \emph{sparse wreath cycle decomposition} of $w^a$.
\end{cor}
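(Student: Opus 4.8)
The plan is to exploit that the prescribed conjugator $a=(s,1_H)$ has trivial top component, so that conjugation by $a$ neither alters the top components of the factors $w_i=(f_i,h_i)$ nor, by Corollary \ref{cor:conjugate_terr} applied with $t=1_H$, their territories. In particular the $w_i^a$ remain pairwise disjoint wreath cycles (each is a wreath cycle by Lemma \ref{conjugateInWreath}), and since conjugation is a group homomorphism we automatically have $w^a=w_1^a\cdots w_\ell^a$ for \emph{every} such $a$. Thus the entire content of the corollary is to choose $s\in K^\Gamma$ so that each factor $w_i^a$ becomes sparse, and this can be arranged one cycle at a time because the territories are disjoint.

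First I would treat each $w_i$ separately. If $h_i=1_H$, then $\lvert\terr(w_i)\rvert=1$ and $w_i$ is already sparse, so I set $s_i:=1_{K^\Gamma}$. If $h_i\neq 1_H$, I fix $\gamma_0\in\supp(h_i)=\terr(w_i)$, put $x:=[\gamma_0,w_i]\Yade$, and take as target the sparse wreath cycle $v_i:=([\gamma_0,x]\mathcal{E},h_i)$, whose base component is $x$ at $\gamma_0$ and trivial elsewhere. A direct computation shows $[\gamma_0,v_i]\Yade=x$, so $w_i$ and $v_i$ share both their top component $h_i$ and their Yade-class, hence have the same load. Now the key move: I apply the explicit construction in the proof of Lemma \ref{lem:load_invariant} to the pair $w_i,v_i$, but since their top components already coincide I may take $t=1_H$ there; the lemma then yields a base component $s_i$, supported on $\terr(w_i)$, with $w_i^{(s_i,1_H)}=v_i$ sparse.

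Finally I would assemble the local data into a single conjugator. Because the territories $\terr(w_i)$ are pairwise disjoint, the prescription $[\gamma]s:=[\gamma]s_i$ for $\gamma\in\terr(w_i)$ and $[\gamma]s:=1_K$ otherwise defines $s\in K^\Gamma$ unambiguously; set $a:=(s,1_H)$. It remains to check $w_i^a=v_i$ for each $i$, i.e.\ that conjugating $w_i$ by $(s,1_H)$ sees only $s$ restricted to $\terr(w_i)$. This follows from the base-component formula $[\gamma](s^{-1}f_is^{h_i^{-1}})=([\gamma]s)^{-1}\cdot[\gamma]f_i\cdot[\gamma^{h_i}]s$: for $\gamma\notin\terr(w_i)$ both $[\gamma]f_i=1_K$ and $\gamma^{h_i}=\gamma$ force the value $1_K$, while for $\gamma\in\terr(w_i)=\supp(h_i)$ both $\gamma$ and $\gamma^{h_i}$ lie in $\terr(w_i)$, where $s$ agrees with $s_i$. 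I expect the main obstacle to be precisely this insistence on a \emph{trivial} top component in the conjugator; it is overcome by choosing the target cycle $v_i$ to share the top component of $w_i$, which forces $t=1_H$ in Lemma \ref{lem:load_invariant} and keeps $a$ inside $K^\Gamma\times\langle 1_H\rangle$. The remaining bookkeeping of merging the $s_i$ is routine once disjointness is invoked.
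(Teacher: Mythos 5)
Your proposal is correct and follows essentially the same route as the paper: both construct, for each cycle $w_i$, a sparse target $v_i$ with the same top component whose single nontrivial base entry is the yade of $w_i$ at a chosen point, and then obtain a conjugator with trivial top component via the construction of Lemma \ref{lem:load_invariant} applied cycle by cycle. The paper simply packages the gluing of the local base components $s_i$ by citing the proof of Theorem \ref{conjugationInWreathProd} with $t=1_H$, whereas you unfold that step explicitly.
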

\begin{proof}
For $1\leq i\leq \ell$ let
$w_i = (f_i,h_i)\in S$, choose $\gamma_i \in \terr(w_i)$ and define \[e_i:
\Gamma \to K, \gamma\mapsto 
\begin{cases}
[\gamma_i, w_i]\Yade,& \text{ if }\gamma=\gamma_i\\
1_K,& \text{ else }.
\end{cases}\]
Now, for $1\leq i\leq \ell$ set $v_i \coloneqq (e_i,h_i)$ and $v\coloneqq
v_1\cdots v_\ell$.
Choosing $t=1_H$, we obtain $h^t=h$ and $\mathcal{P}(w)^t = \mathcal{P}(v)$ and the
existence of an $a\in W$ with $w^a=v$ and top component $t$
follows by the proof of Theorem \ref{conjugationInWreathProd}.
\end{proof}

We now turn towards centralisers of elements of $W$.
It is well known that for a single cycle $h\in \sym(\Gamma)$, the
centraliser of $h$ in $\sym(\Gamma)$ is given by
$C_{\sym(\Gamma)}(h) \simeq \langle h \rangle \times \sym(\Gamma\setminus
\supp(h))$. The goal of this section is to give an explicit parametrisation
of the $W$-centraliser of an arbitrary wreath product element $w\in W$ by an
iterated cartesian product.\\
We first observe a relation between the top component of elements of
$C_W(w)$ and the stabiliser of the territory decomposition of $w$.
\begin{lemma}\label{lem:TopStabiliser}
Let $w=(f,h)\in S$ and $a = (s,t)\in C_W(w)$. Then $t \in
\operatorname{Stab}_{C_{\sym(\Gamma)}(h)}(\mathcal{P}(w))$.
\end{lemma}
\begin{proof}
Let $a=(s,t)\in C_W(w)$. Then  $\P(w) = \P(w^a) = \P(w)^t$ and therefore $t\in
\operatorname{Stab}_{\sym(\Gamma)}(\mathcal{P}(w))$ by Corollary
\ref{conjOfTerrDecomp}. It remains to show $t\in C_{\sym(\Gamma)}(h)$. Observe
\[
(f,h) = w = w^a =  \left( \left(s^{-1}\right)^{t}\cdot f^t\cdot s^{h^{-1}\cdot t},
t^{-1}\cdot h\cdot t \right),
\]
which implies $h=h^t$ and the claim follows.
\end{proof}
To describe $C_W(w)$ explicitly, Lemma \ref{lem:TopStabiliser} suggests to
investigate the structure of
$\operatorname{Stab}_{C_{\sym(\Gamma)}(h)}(\mathcal{P}(w))$ further. First,
we restate the group theoretic structure of $C_{\sym(\Gamma)}(h)$ for $h\in
\symg$.
\begin{lemma}[{\cite[Lemma 6.1.8]{Serres}}]\label{lem:centraliser_serres}
Let $h\in \symg$ and $\left\{\mathcal{O}_1,\dots, \mathcal{O}_k\right\}$ be a
system of representatives of
equivalence classes of orbits of $\langle h \rangle$ on $\Gamma$, where two
orbits are equivalent if and only if they have the same cardinality. Then 
\[
C_{\symg}(h) \simeq \bigtimes_{i=1}^k C_{\sym(\mathcal{O}_i)}
\left(\langle h\rangle^{\mathcal{O}_i}\right)\wr \sym \left(\vert \overline{\mathcal{O}_i} \vert \right),
\]
where for all $i=1,\dots, k$ we denote by  $\langle h\rangle^{\mathcal{O}_i}$ the group $\langle
h \rangle$ induces on the orbit $\mathcal{O}_i$ and
$\overline{\mathcal{O}_i}$ denotes the equivalence class of the
representative $\mathcal{O}_i$.
\end{lemma}
As our goal is to describe the centraliser of a wreath product
element explicitly, we require concrete isomorphisms. Our next aim is to give a
constructive version of the above lemma. For this, we start by
investigating what structure the 
preimage of $\sym\left(\vert \overline{\mathcal{O}_i}\vert \right)$ must
have under such an isomorphism.\\
The following definition constructs an element in $\symg$ induced by a
permutation of a set of pairwise disjoint cycles in $\symg$ of the same
order.
\begin{definition}\label{def:Psi_embedding}
Let $I$ be a finite set and, for $i\in I$, let $h_i\in \symg$ 
be pairwise disjoint cycles of the same order and fix
$\gamma_i\in \supp(h_i)$.
Define the map $\Psi$ via
\begin{align*}
&[\{(h_i,\gamma_i)\,:\, i\in I\}, -]\Psi: \sym(I)\hookrightarrow \symg,\\
&\sigma \mapsto \left(\gamma \mapsto 
\begin{cases}
[\gamma_{[i]\sigma}]\, h_{[i]\sigma}^j,& \text{ if }
\gamma = [\gamma_i]\, h_i^j \text{ for some }i\in I,\, j\in \mathbb{Z}_{>0}\\
\gamma,& \text{ else}
\end{cases}
\right).
\end{align*}
\end{definition}
The proof of the following lemma is straightforward.
\begin{lemma}
The map $\Psi$ in the definition above is a monomorphism.
\end{lemma}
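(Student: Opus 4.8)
The plan is to verify in turn that each $\hat\sigma \coloneqq [\{(h_i,\gamma_i) ~:~ i\in I\},\sigma]\Psi$ is a well-defined permutation of $\Gamma$, that $\sigma\mapsto\hat\sigma$ is a homomorphism, and that it has trivial kernel. Write $n$ for the common order of the cycles $h_i$. The only step that genuinely uses the hypotheses is well-definedness: a point $\gamma\in\supp(h_i)$ can be written as $[\gamma_i]h_i^j$ only for $j$ determined modulo $n$, so I must check that the prescribed image $[\gamma_{[i]\sigma}]h_{[i]\sigma}^j$ does not depend on the representative $j$. This is exactly where I use that $h_{[i]\sigma}$ also has order $n$: replacing $j$ by $j+n$ leaves both $[\gamma_i]h_i^j$ and $[\gamma_{[i]\sigma}]h_{[i]\sigma}^j$ unchanged. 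Pairwise disjointness of the supports guarantees that the index $i$ with $\gamma\in\supp(h_i)$ is unique, so the two branches of the definition never conflict; thus $\hat\sigma$ is a well-defined map $\Gamma\to\Gamma$.

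For the homomorphism property I would compute $\hat\sigma\hat\tau$ directly, recalling that all maps act from the right, so that $[\gamma](\hat\sigma\hat\tau)=[[\gamma]\hat\sigma]\hat\tau$ and $[i](\sigma\tau)=[[i]\sigma]\tau$. A point outside every support is fixed by $\hat\sigma$, $\hat\tau$ and $\widehat{\sigma\tau}$, so only points $\gamma=[\gamma_i]h_i^j$ need attention. For such $\gamma$, applying $\hat\sigma$ gives $[\gamma_{[i]\sigma}]h_{[i]\sigma}^j$, which lies in $\supp(h_{[i]\sigma})$ and is the point of \emph{phase $j$} of that cycle; applying $\hat\tau$ then yields $[\gamma_{[[i]\sigma]\tau}]h_{[[i]\sigma]\tau}^j=[\gamma_{[i](\sigma\tau)}]h_{[i](\sigma\tau)}^j=[\gamma]\widehat{\sigma\tau}$. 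Hence $\hat\sigma\hat\tau=\widehat{\sigma\tau}$. Since $\widehat{\mathrm{id}_I}=\mathrm{id}_\Gamma$ is immediate from the definition, each $\hat\sigma$ has two-sided inverse $\widehat{\sigma^{-1}}$ and is therefore a bijection; this simultaneously confirms that $\Psi$ really maps into $\symg$ and that it is a group homomorphism.

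It remains to prove injectivity. Suppose $\hat\sigma=\mathrm{id}_\Gamma$ and fix $i\in I$. Evaluating at $\gamma_i=[\gamma_i]h_i^{n}$ gives $[\gamma_i]\hat\sigma=[\gamma_{[i]\sigma}]h_{[i]\sigma}^{n}=\gamma_{[i]\sigma}$, using $\vert h_{[i]\sigma}\vert=n$. Thus $\gamma_{[i]\sigma}=\gamma_i$ for every $i$. Because the supports $\supp(h_i)$ are pairwise disjoint and $\gamma_i\in\supp(h_i)$, the chosen base points are pairwise distinct, so $\gamma_{[i]\sigma}=\gamma_i$ forces $[i]\sigma=i$. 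As $i$ was arbitrary, $\sigma=\mathrm{id}_I$, so the kernel is trivial and $\Psi$ is a monomorphism.

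I do not anticipate a real obstacle: the content is bookkeeping. The two points that require care, and which I would state explicitly, are the use of the equal-order hypothesis to make $\hat\sigma$ well defined, and the right-action convention, which is what makes the cycle indices compose in the order $\sigma\tau$ (rather than $\tau\sigma$) in the homomorphism calculation.
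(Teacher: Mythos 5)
Your proof is correct, and since the paper omits this proof entirely (declaring it straightforward), your careful verification of well-definedness via the equal-order hypothesis, the right-action composition computation, and the injectivity argument via the distinct base points $\gamma_i$ is exactly the intended routine argument. No gaps.
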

The next lemma is a constructive version of Lemma
\ref{lem:centraliser_serres} by using the map $\Psi$ introduced in Definition
\ref{def:Psi_embedding} to describe $C_{\symg}(h)$ for $h=h_1\cdots
h_\ell\in \symg$ in a fixed disjoint cycle decomposition.
We consider a partition of $\Gamma$ whose parts consist of the union of
the supports of cycles $h_i$  of equal order. This partition corresponds to the equivalence
classes $\mathcal{O}_1, \dots, \mathcal{O}_k$ from Lemma \ref{lem:centraliser_serres}.
A necessary condition for an element of $\sym(\Gamma)$ to centralise $h$ is that it
stabilises this partition.
\begin{lemma}\label{lem:centraliser_symg}
Let $h=h_1\cdots h_\ell\in \symg$ be in disjoint cycle decomposition and 
define $\mathcal{O}(h) \coloneqq \{ \, \vert h_i \vert \,:\, 1\leq i \leq
\ell\}$ and for $\mathfrak{o}\in \mathcal{O}(h)$ define $\C(\mathfrak{o},h)\coloneqq 
\{h_i \,:\, \vert h_i\vert = \mathfrak{o},\, 1\leq i \leq \ell \}$.
For each $\mathfrak{o}\in \mathcal{O}(h)$ and each $z\in \C(\mathfrak{o},h)$,
choose $\gamma_z\in \supp(z)$ and define $\Psi_{\mathfrak{o}}: \sym(\C(\mathfrak{o},h))\hookrightarrow
\symg, \sigma\mapsto [((z,\gamma_{z})\,:\, z\in \C(\mathfrak{o},h)),\sigma]\Psi$
as in Definition \ref{def:Psi_embedding}.
Then the elements of $C_{\symg}(h)$ are parameterised by the following
group isomorphism
\begin{align*}
\bigtimes_{\mathfrak{o}\in \mathcal{O}(h)}
\left( \langle (1,\dots, \mathfrak{o})\rangle \wr_{\C(\mathfrak{o},h)} 
\sym( \C(\mathfrak{o},h)\right) \times
\sym(\Gamma\setminus \supp(h)) 
\xrightarrow{\sim} C_{\symg}(h),\\
\left( \left(\left((1,\dots,\mathfrak{o})^{e_z}\right)_{z\in \C(\mathfrak{o},h)},\, \sigma_{\mathfrak{o}}\right)_{\mathfrak{o}\in
	\mathcal{O}(h)} ,\, \pi_0 \right)
\mapsto \left( \prod_{\mathfrak{o}\in \mathcal{O}(h)}
\left(\prod_{z\in \C(\mathfrak{o},h)} h_z^{e_z}\right)\cdot [\sigma_{\mathfrak{o}}]
\Psi_{\mathfrak{o}}\right)\cdot \pi_0,
\end{align*}
where for $z\in \C(\mathfrak{o},h)$, the integer $e_z \in \{0, \dots, \mathfrak{o}-1\}$. 
\end{lemma}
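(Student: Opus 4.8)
The plan is to establish the claimed isomorphism by exhibiting the map as the composition of two well-understood facts: the classical description of $C_{\symg}(h)$ as a direct product over distinct cycle lengths of wreath products of cyclic groups by symmetric groups, together with the explicit embedding $\Psi$ from Definition~\ref{def:Psi_embedding} that realises the ``permute the cycles'' part concretely. First I would verify that the map is well-defined, i.e.\ that its image actually centralises $h$. For a single factor, an element $h_z^{e_z}$ lies in $\langle h_z\rangle$ and hence commutes with $h$ because distinct $h_i$ are disjoint and $h_z$ commutes with itself; and $[\sigma_o]\Psi_o$ permutes the cycles of length $o$ among themselves in an order-preserving way, so conjugating $h$ by it merely relabels these cycles according to $\sigma_o$, fixing $h$ since $\sigma_o$ is a permutation of cycles of \emph{equal} order. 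The factor $\pi_0\in\sym(\Gamma\setminus\supp(h))$ commutes with $h$ trivially as it moves only fixed points of $h$. Since the supports involved in the different factors are pairwise disjoint, the factors commute and the product lands in $C_{\symg}(h)$.

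Next I would show the map is a homomorphism. The domain is a direct product, so it suffices to check that each factor maps homomorphically and that the images of distinct factors commute. Commutativity of the images follows again from disjointness of supports. Within a single $o$-block, I would note that $\langle(1,\dots,o)\rangle\wr_{\mathcal{C}(o,h)}\sym(\mathcal{C}(o,h))$ carries its natural wreath-product multiplication, and that sending $\bigl(((1,\dots,o)^{e_z})_z,\sigma_o\bigr)$ to $\bigl(\prod_z h_z^{e_z}\bigr)\cdot[\sigma_o]\Psi_o$ respects this: the base part contributes the cyclic shifts and the top part $\sigma_o$ acts by permuting which cycle each shift is applied to, exactly as the semidirect-product structure prescribes. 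Here the lemma stating that $\Psi$ is a monomorphism does the bookkeeping for the top-group part, and a short computation confirms that $[\sigma_o]\Psi_o$ conjugates $h_z^{e_z}$ to $h_{[z]\sigma_o}^{e_z}$, matching the wreath-product relation.

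Finally I would establish bijectivity. Injectivity is clean: if an image equals the identity, then on each $\supp(h_z)$ the element $h_z^{e_z}$ composed with the relabelling $[\sigma_o]\Psi_o$ must be trivial, forcing $\sigma_o=\mathrm{id}$ (since a nontrivial $\sigma_o$ genuinely moves some $\gamma_z$ to a point in a different cycle's support) and then $e_z=0$ for all $z$; likewise $\pi_0=\mathrm{id}$ on $\Gamma\setminus\supp(h)$. For surjectivity I would take an arbitrary $t\in C_{\symg}(h)$ and use the standard fact that $t$ must permute the cycles $h_1,\dots,h_\ell$ of $h$, preserving their lengths, and act on $\Gamma\setminus\supp(h)$ arbitrarily. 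The induced permutation of the length-$o$ cycles defines $\sigma_o$, and once $\sigma_o$ is fixed, $t\cdot[\sigma_o]\Psi_o^{-1}$ stabilises each cycle individually; on $\supp(h_z)$ any permutation commuting with the single cycle $h_z$ is a power $h_z^{e_z}$, which recovers the base data, and the residual action on the fixed points gives $\pi_0$. The main obstacle is the surjectivity step: one must argue carefully that $t$ induces a \emph{well-defined} permutation on the set of cycles of each fixed length and that, after factoring out $[\sigma_o]\Psi_o$, the remaining element restricts on each $\supp(h_z)$ to a genuine power of $h_z$ rather than some other commuting permutation. This reduces to the base case $C_{\sym(\supp(h_z))}(h_z)=\langle h_z\rangle$ for a single full-support cycle, which is elementary but is the linchpin that makes the whole parameterisation exact.
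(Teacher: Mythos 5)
Your proposal is correct, and it is worth noting that the paper does not actually prove this lemma at all: it declares the result well known, cites Seress [Lemma 6.1.8], and only adapts the statement to its notation. You instead supply a direct, self-contained argument, and it is the standard one: check that the image centralises $h$ (disjointness of supports plus the computation $h_z^{[\sigma_o]\Psi_o}=h_{[z]\sigma_o}$), check the homomorphism property factor by factor, and then prove bijectivity. Your verification that $[\sigma_o]\Psi_o$ conjugates $h_z^{e_z}$ to $h_{[z]\sigma_o}^{e_z}$ is exactly what is needed to match the twisted multiplication $(f,\sigma)(f',\sigma')=(f\,(f')^{\sigma^{-1}},\sigma\sigma')$ used in the paper, and you correctly isolate the two genuine content points in surjectivity: that $t\in C_{\symg}(h)$ induces a well-defined, length-preserving permutation of the cycles (because conjugation by $t$ sends each cycle of $h$ to a cycle of $h^t=h$), and that after dividing out $\prod_o[\sigma_o]\Psi_o$ the residual element fixes each $\supp(h_z)$ setwise and commutes with the full cycle $h_z$ there, whence it is a power of $h_z$ by the elementary fact $C_{\sym(\supp(h_z))}(h_z)=\langle h_z\rangle$. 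The only points you should make explicit when writing this up are the injectivity of the induced map on each $\supp(h_z)$ into $\{0,\dots,o-1\}$ (i.e.\ that $h_z^{e_z}=1$ on $\supp(h_z)$ forces $e_z=0$ in the chosen range) and that the factorisation in the surjectivity step is carried out simultaneously for all $o$ and for $\pi_0$; both are routine. What your route buys is a proof that is independent of the literature and that makes the role of $\Psi$ explicit, which is precisely what the paper needs later when it refines this parameterisation to $\operatorname{Stab}_{C_{\sym(\Gamma)}(h)}(\mathcal{P}(w))$ in Lemma \ref{lem:centraliser_top}.
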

As announced after Lemma \ref{lem:TopStabiliser}, we give
an explicit bijection from an iterated cartesian product into 
$\operatorname{Stab}_{C_{\sym(\Gamma)}(h)}(\mathcal{P}(w))$ which is a
crucial step towards the parametrisation of $C_W(w)$. 
We proceed in a similar way as in Lemma \ref{lem:centraliser_symg}, where we
consider the partition on $\Gamma$ induced by the orders of the disjoint cycles of
$h$. We now translate these concepts from permutation groups to wreath products.
Recall the decomposition of a wreath product element $w$ into disjoint wreath
cycles $w=(f,h)=w_1\cdots w_\ell$ from Theorem \ref{thm:wreathcycle-decomposition}. By Lemma
\ref{lem:load_invariant} for each $a = (s,t) \in C_W(w)$ and each $w_i\in \C(w)$
we have $w_i^a\in \C(w)$ and $\ld(w_i^a)=\ld(w_i)$. 
By Lemma \ref{lem:TopStabiliser} the top element $t\in \symg$ must
centralise $h$ and stabilise $\P(w)$.  This
territory decomposition can be viewed as a refinement of the partition of
$\Gamma$ above.
\begin{lemma}\label{lem:centraliser_top}
Let $w = (f,h) \in S$. For each $z\in \C(w)$
choose $\gamma_z\in \terr(z)$. 
For all $L\in \L(w)$, define \[\Psi_L: \sym(\C(L,w))\hookrightarrow
\symg,\, \sigma\mapsto [\{(h_z,\gamma_z)\,:\, z=(f_z,h_z)\in
\C(L,w)\},\sigma]\Psi,\] where $\Psi$ is as in Definition
\ref{def:Psi_embedding}. Then 
the elements of $\operatorname{Stab}_{C_{\sym(\Gamma)}(h)}(\mathcal{P}(w))$
are parameterised by the following group isomorphism 
\begin{align*}
\bigtimes_{L\in \L(w)}
\left( \langle h_L\rangle \wr_{\C(L,w)} 
\sym( \C(L,w)\right) \times
\sym(\Gamma\setminus \terr(w)) 
\xrightarrow{\sim}
\operatorname{Stab}_{C_{\sym(\Gamma)}(h)}(\mathcal{P}(w)),\\
\left( \left(\left(h_L^{e_z}\right)_{z\in \C(L,w)},\, \sigma_L\right)_{L\in
 \L(w)} ,\, \pi_0 \right)
\mapsto \left( \prod_{L\in \L(w)}
\left(\prod_{z\in \C(L,w)} h_z^{e_z}\right)\cdot [\sigma_L]
\Psi_L\right)\cdot \pi_0,
\end{align*}
where for $L=(k^K,j)\in \L(w)$ we define $h_L \coloneqq (1,\dots, j)$ and
for each $z\in \C(L,w)$ the integer $e_z \in \{0,\dots, \vert h_L\vert-1\}$.
\end{lemma}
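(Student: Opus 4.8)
The plan is to obtain the asserted map as the restriction of the isomorphism of Lemma~\ref{lem:centraliser_symg} to the subgroup of $C_{\symg}(h)$ stabilising $\mathcal{P}(w)$. Since a restriction of a group isomorphism to a subgroup is automatically an injective homomorphism onto its image, the only substantial tasks are to pin down this image and to recognise the resulting source group as $\bigtimes_{L\in\L(w)}\bigl(\langle h_L\rangle\wr\sym(\C(L,w))\bigr)\times\sym(\Gamma\setminus\terr(w))$.

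First I would compute the action of a general $t\in C_{\symg}(h)$ on $\mathcal{P}(w)$. Writing $t$ through the parameters $\bigl(\bigl((1,\dots,o)^{e_z}\bigr)_z,\sigma_o\bigr)_o$ and $\pi_0$ of Lemma~\ref{lem:centraliser_symg}, I note that each base rotation $h_z^{e_z}$ is a power of the cycle $h_z$ and hence fixes $\supp(h_z)=\terr(z)$ setwise, so it induces the identity on the cycles of $h$; the factor $[\sigma_o]\Psi_o$ permutes the cycles of order $o$ according to $\sigma_o$, and $\pi_0$ permutes the fixed points of $h$. Using $\supp(h_z)^t=\supp(h_z^t)$ and $h^t=h$, the set $\terr(z)^t$ therefore depends only on the induced permutation of cycles and fixed points, namely $(\sigma_o)_o$ together with $\pi_0$, and equals the territory of the image cycle. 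Splitting $\terr(w)=\supp(h)\cupdot(\fix(h)\cap\terr(w))$ as in Remark~\ref{decomposition-territory}, I conclude that $\mathcal{P}(w)^t=\mathcal{P}(w)$ holds if and only if the induced permutation maps $\C(L,w)$ onto itself for every $L\in\L(w)$; equivalently each $\sigma_o$ preserves the finer partition of $\C(o,h)$ into the load classes $\C(L,w)$ with $\vert h_L\vert=o$, while $\pi_0$ preserves the splitting of $\fix(h)$ into $\terr(w)$ and its complement together with the grouping of $\fix(h)\cap\terr(w)$ into singleton territories of equal Yade class.

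It then remains to regroup the source of Lemma~\ref{lem:centraliser_symg} accordingly. The condition above cuts the top group $\sym(\C(o,h))$ down to the Young subgroup $\prod_{L:\,\vert h_L\vert=o}\sym(\C(L,w))$ and cuts $\sym(\Gamma\setminus\supp(h))=\sym(\fix(h))$ down to $\bigl(\prod_{L:\,\vert h_L\vert=1}\sym(\C(L,w))\bigr)\times\sym(\Gamma\setminus\terr(w))$, leaving the base group $\langle(1,\dots,o)\rangle^{\C(o,h)}$ unconstrained. Invoking the standard fact that inside $G\wr\sym(\Omega)=G^\Omega\rtimes\sym(\Omega)$ the preimage of a Young subgroup $\prod_i\sym(\Omega_i)$ decomposes as $\prod_i\bigl(G\wr\sym(\Omega_i)\bigr)$, each order-$o$ factor becomes $\prod_{L:\,\vert h_L\vert=o}\bigl(\langle h_L\rangle\wr\sym(\C(L,w))\bigr)$, and the defining formula of Lemma~\ref{lem:centraliser_symg} splits correspondingly because $\Psi_o$ restricted to a block-preserving permutation factors as the product of the $\Psi_L$ from Definition~\ref{def:Psi_embedding}. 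Collecting all factors over $\vert h_L\vert>1$ and $\vert h_L\vert=1$ exhausts $\L(w)$ and yields exactly the stated source and map. The main obstacle is the action computation of the previous paragraph: one must verify that neither the base rotations nor the base components of the wreath cycles affect territories as sets, so that stabilising $\mathcal{P}(w)$ is governed purely by the top permutation preserving loads. A minor point to flag is the case $h_z=1_H$, where $\gamma_z$ is the singleton territory point and $\Psi_L$ simply permutes these points.
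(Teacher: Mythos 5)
Your proposal is correct and follows exactly the route the paper intends: the paper gives no written proof of Lemma~\ref{lem:centraliser_top}, remarking only that Lemma~\ref{lem:centraliser_symg} ``can be adapted to this refined partition'', and your argument --- computing the action on $\mathcal{P}(w)$, restricting the isomorphism of Lemma~\ref{lem:centraliser_symg} to the load-preserving Young subgroup, and regrouping the wreath-product factors accordingly --- is precisely that adaptation, including the correct handling of the trivial-top-component classes. No gaps.
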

As in Corollary \ref{cor-sparse}, every wreath product element $w'\in W$ 
is conjugate to a wreath
product element $w\in W$ in sparse disjoint wreath cycle decomposition. Therefore,
the next theorem is only stated for elements in sparse, disjoint wreath
cycle decomposition as $C_W(w') = C_W(w^a) = C_W(w)^a$ for a suitable 
$a\in W$. This element $a$ can be constructed explicitly by Corollary \ref{cor-sparse}.
We generalise Ore's result \cite[Theorem 8]{Ore} for the full monomial
group to the case $W=K\wrg H$, where $H$ need not be the
full symmetric group and $K$ need not be finite. We remark that in the
parametrisation of the elements of $C_W(w)$ the stabiliser
$\operatorname{Stab}_{C_W(w)}(\P(w))$ arises. This is due to Lemma
\ref{lem:TopStabiliser}.\\
We first introduce some notation. Let $w=(f,h)=\prod_{i=1}^\ell w_i\in W$ 
be in sparse, disjoint wreath cycle decomposition as in Corollary
\ref{cor-sparse}. For each cycle $z\in \C(w)$, fix a point $\gamma_z\in
\terr(z)$ such that $[\gamma]f = 1_K$ for all $\gamma \in \terr(z)\setminus
\{\gamma_z\}$. Moreover, for each load $L\in \L(w)$, choose one
representative cycle $z_L\in \C(L,w)$ and fix $\gamma_L \coloneqq \gamma_{z_L}$.
For any other $z \in \C(L,w)$, fix elements $x_z\in
K$ with $[\gamma_z]f = x_z^{-1}\cdot [\gamma_L]f\cdot x_z$. In particular, 
for every element $\gamma\in \terr(w)$ and any non-negative integer $e$
there exists a unique $z=(f_z,h_z)\in \C(w)$ and $0\leq j < \vert h_z
\vert$ such that $\gamma = \gamma_z^{h^{j-e}}$.
\begin{theorem}\label{thm:structure_centraliser}
Let $w=(f,h)=\prod_{i=1}^\ell w_i\in W$ 
be in sparse, disjoint wreath cycle decomposition.
Then the elements of $C_w(w)$ can be parameterised by the following bijection
$\Phi$ defined below:
\begin{align*}
\Phi&: \left( \bigtimes_{L\in \L(w)} C_K([\gamma_L]f)^{\{\terr(z)\,:\,
z\in \C(L,w)\}}\times K^{\Gamma \setminus \terr(w)}\right)
\times \operatorname{Stab}_{C_H(h)}(\mathcal{P}(w))\xrightarrow{1:1}C_W(w),\\
&(c,t) = \left( \left(\left(c_{L,z}\right)_{L\in \L(w),\, z\in \C(L,w)},\,c_0 \right)
,\left( \prod_{L\in \L(w)}
\left(\prod_{z\in \C(L,w)} h_z^{e_z}\right)\cdot [\sigma_L]
\Psi_L\right)\cdot \pi_0\right) \mapsto (s,t),
\end{align*}
where $t$ is parameterised according to the image of the map
in Lemma \ref{lem:centraliser_top} and  $s:\Gamma\to K$ is
defined by
\[
[\gamma]s = 
\begin{cases}
x_z^{-1}\cdot c_{L,z}\cdot x_{[z]\sigma_L}
, &\text{ if }\gamma = \gamma_z^{h^{j-e_z}} \in \terr(w),\, e_z<j<\vert 
h_z\vert\text{ or } j=0,\\
x_z^{-1}\cdot c_{L,z}\cdot x_{[z]\sigma_L} \cdot \left[\gamma_{[z]\sigma_L}
\right]f, &\text{ if }\gamma = \gamma_z^{h^{j-e_z}} \in \terr(w),\, 
1\leq j \leq e_z,\\
[\gamma]c_0, &\gamma \not\in \terr(w).
\end{cases}
\]
\end{theorem}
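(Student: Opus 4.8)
The plan is to verify that $\Phi$ is a well-defined map into $C_W(w)$ and a bijection, reducing everything to a cycle-by-cycle analysis. By Lemma~\ref{conjugateInWreath}, an element $a=(s,t)\in W$ satisfies $w^a=w$ precisely when $t^{-1}ht=h$ and $(s^{-1})^t f^t s^{h^{-1}t}=f$; in particular Corollary~\ref{conjOfTerrDecomp} forces $\mathcal{P}(w)=\mathcal{P}(w)^t$, so the top component of any centralising element lies in $\operatorname{Stab}_{C_H(h)}(\mathcal{P}(w))$. This is exactly the second factor of the domain, and such $t$ are parameterised by the data $(\sigma_L, e_z, \pi_0)$ through the isomorphism of Lemma~\ref{lem:centraliser_top}. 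First I would fix such a $t$ together with its induced permutation $\sigma$ of $\C(w)$ (restricting to $\sigma_L$ on each $\C(L,w)$), reducing the problem to describing all base components $s$ with $(s,t)\in C_W(w)$.

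The key reduction is that, since the wreath cycles of $w$ are pairwise disjoint and conjugation distributes over their product, uniqueness of the wreath cycle decomposition (Theorem~\ref{thm:wreathcycle-decomposition}) gives $(s,t)\in C_W(w)$ if and only if $z^{(s,t)}=[z]\sigma$ for every $z\in\C(w)$: each $z^{(s,t)}$ is a wreath cycle with territory $\terr(z)^t=\terr([z]\sigma)$, and these territories match those of $w$. For a single sparse cycle $z=(f_z,h_z)$ of load $L=(k^K,j)$, conjugation by $(s,t)$ already produces the correct top component $h_z^t=h_{[z]\sigma}$, and by Lemma~\ref{conjugateInWreath} its yade at the relevant point is a prescribed $K$-conjugate of $[\gamma_z,z]\Yade=[\gamma_z]f$. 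Here Ore's Lemma~\ref{le:conjugateCs} is the engine: expressing the yades as the products of the base components along the two cycles, the admissible restrictions $s|_{\terr(z)}$ correspond bijectively to the choices of an element $\kappa\in K$ with $\kappa^{-1}[\gamma_z]f\,\kappa=[\gamma_{[z]\sigma}]f$, the remaining values of $s$ along the cycle then being determined by $\kappa$ and the shift $e_z$.

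The crucial observation is that the set of such $\kappa$ is a right coset of $C_K([\gamma_L]f)$ in $K$. Using the fixed elements $x_z$ with $[\gamma_z]f=x_z^{-1}[\gamma_L]f\,x_z$, a direct computation shows that $\kappa=x_z^{-1}c_{L,z}\,x_{[z]\sigma}$ conjugates $[\gamma_z]f$ to $[\gamma_{[z]\sigma}]f$ and runs over exactly this coset as $c_{L,z}$ ranges over $C_K([\gamma_L]f)$. Substituting this $\kappa$ into the explicit description of $s|_{\terr(z)}$ from Lemma~\ref{le:conjugateCs} and tracking the cyclic index $\gamma=\gamma_z^{h^{j-e_z}}$ reproduces the two-case formula in the statement; the extra right factor $[\gamma_{[z]\sigma}]f$ occurring for $1\leq j\leq e_z$ is the single non-trivial base component of $[z]\sigma$, which the shift $e_z$ places across the chosen base point. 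On $\Gamma\setminus\terr(w)$ the value of $s$ is unconstrained and is recorded directly by the free parameter $c_0\in K^{\Gamma\setminus\terr(w)}$, giving the remaining factor of the domain.

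Injectivity of $\Phi$ then follows because $t$ is recovered through Lemma~\ref{lem:centraliser_top} and, for fixed $t$, each $c_{L,z}$ is recovered from the values of $s$ on $\terr(z)$ and $c_0$ from $s|_{\Gamma\setminus\terr(w)}$; surjectivity is precisely the cycle-by-cycle characterisation together with the necessity of $t\in\operatorname{Stab}_{C_H(h)}(\mathcal{P}(w))$ established at the outset. I expect the main obstacle to be the precise bookkeeping of the cyclic indexing and the shift $e_z$: confirming that the piecewise formula for $s$ reproduces $z^{(s,t)}=[z]\sigma$ on the nose, and in particular that the unique non-trivial base component of $[z]\sigma$ ends up in the correct position. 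By contrast, the conceptual content is supplied entirely by Lemmas~\ref{conjugateInWreath}, \ref{le:conjugateCs} and~\ref{lem:centraliser_top}.
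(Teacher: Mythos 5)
Your proposal is correct and follows essentially the same route as the paper: the top component is forced into $\operatorname{Stab}_{C_H(h)}(\mathcal{P}(w))$ and parameterised via Lemma \ref{lem:centraliser_top}, the base component is analysed cycle-by-cycle with one value on each $\terr(z)$ ranging over the coset $x_z^{-1}C_K([\gamma_L]f)x_{[z]\sigma_L}$ and determining the rest, and $K^{\Gamma\setminus\terr(w)}$ is free. The only cosmetic difference is that you invoke Lemma \ref{le:conjugateCs} explicitly where the paper carries out the equivalent pointwise induction by hand (and note the set of admissible $\kappa$ is strictly a right coset of the conjugate subgroup $C_K([\gamma_z]f)$, which your parameterisation handles correctly anyway).
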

\begin{proof}
We omit the proof that $\Phi$ is well-defined in order to concentrate on
the more important property that $\Phi$ is surjective. A proof for
well-definedness can be deduced from the arguments below by reversing them.
We show surjectivity of $\Phi$ by proving that  every element of $C_W(w)$ can be 
decomposed into the components of the domain of $\Phi$.\\
Let $a=(s,t)\in C_W(w)$. Then $t\in C_H(h)$, since $w=w^a = \left((s^{-1})^t\cdot f^t \cdot
s^{h^{-1}t},\, t^{-1}\cdot h\cdot t \right)$. Moreover, $t\in
\operatorname{Stab}_{C_H(h)}(\mathcal{P}(w))$,
as $\mathcal{P}(w) = \mathcal{P}(w^a) = \mathcal{P}(w)^t$. 
By Lemma \ref{lem:centraliser_top}, we
parameterise $t\in\operatorname{Stab}_{C_H(h)}(\mathcal{P}(w))$ as 
\[t = \left( \prod_{L\in \L(w)}
\left(\prod_{z\in \C(L,w)} h_z^{e_z}\right)\cdot [\sigma_L]
\Psi_L\right)\cdot \pi_0,\]
where $e_{z}\in \mathbb{Z}_{\geq 0}$, $\pi_L \coloneqq [\sigma_L]\Psi_L\in
\im(\Psi_L)$, the map $\Psi_L$ is as in Lemma
\ref{lem:centraliser_top}, and $\pi_0 \in
\operatorname{Sym}(\Gamma\setminus \terr(w))$. We now decompose $a$ into
disjoint wreath product elements. For any $L\in \L(w)$ define $t_L
\coloneqq \left(\prod_{z\in \C(L,w)} h_{z}^{e_{z}}\right)\cdot \pi_L$ and
$t_0 \coloneqq \pi_0$. For any $L\in \L(w)$ define $\Omega_L \coloneqq
\bigcupdot_{z\in \C(L,w)} \terr( z)$ and $\Omega_0 \coloneqq \Gamma
\setminus \terr( w )$. Further set $s_L \coloneqq
\myrestriction{s}{\Omega_L}$, $a_L \coloneqq (s_L,t_L)$ and $a_{0}\coloneqq
(\myrestriction{s}{\Omega_0},t_0)$. Then $a =
a_0\cdot \prod_{L\in \L(w)} a_L$ and the $a_0,\, a_L$ are pairwise disjoint. Note that for
all $L\in \L(w)$ we have $\terr(a_L) \subseteq \Omega_L$ and
$\terr(a_0)\subseteq \Omega_0$
which shows $w^a = \prod_{L\in \L(w)} \left( \prod_{z\in \C(L,w)} z
\right)^{a_L}$ since disjoint cycles commute. In particular, since
$w=w^a$ and as the load of a wreath cycle is invariant under conjugation, we
have $\left( \prod_{z\in \C(L,w)} z\right)^{a_L} = \prod_{z\in \C(L,w)}
z^{a_L} = \prod_{z\in \C(L,w)}
z$. Fix $z\in \C(L,w)$ for some $L\in \L(w)$. Then, for
$y\coloneqq [z]\sigma_L$ we have 
$(f_{y}, h_{y})=w_{y}=z^{a_L} = \left((s_L^{-1})^{t_L}\cdot f_{z}^{t_L} \cdot
s_L^{h_{z}^{-1}t_L}, t_L^{-1}\cdot h_{z}\cdot t_L \right)$. Note that
for all $\gamma\in \Gamma$ we must have
\begin{align}\label{eq:centraliser_base}
[\gamma]\left((s_L^{-1})^{t_L}\cdot f_{z}^{t_L} \cdot
s_L^{h_{z}^{-1}t_L}\right)= [\gamma]f_{y}.
\end{align}
Thus
\begin{align*}
[\gamma_{y}]f
&= [\gamma_{y},y]\Yade
= \prod_{j=0}^{\vert h_{y} \vert - 1} [\gamma_{y}] f_{y}^{h_{y}^{-j}}
= \prod_{j=0}^{\vert h_{y} \vert - 1} \left[\gamma_{y}^{h_{y}^j}\right] f_{y} \\
& \stackrel{(\ref{eq:centraliser_base})}{=} \prod_{j=0}^{\vert h_{y} \vert - 1}
\left[\gamma_{y}^{h_{y}^j}\right] \left((s_L^{-1})^{t_L}\cdot f_{z}^{t_L} \cdot
s_L^{h_{z}^{-1}t_L}\right)\\
&=
\prod_{j=0}^{\vert h_{z} \vert - 1} 
\left[\gamma_{z}^{h_{z}^{j - e_{z}}}\right]s_L^{-1} \cdot
\left[\gamma_{z}^{h_{z}^{j - e_{z}}}\right] f_{z} \cdot
\left[\gamma_{z}^{h_{z}^{j - e_{z} + 1}}
\right]s_L \\
&= \left[\gamma_{z}^{h_{z}^{-e_{z}}}\right]s_L^{-1}
\cdot \left(\prod_{j=0}^{\vert h_{z} \vert - 1} \cdot 
\left[\gamma_{z}^{h_{z}^{j - e_{z}}}\right] f_{z}\right) \cdot 
\left[\gamma_{z}^{h_{z}^{\vert h_{z} \vert - 1 - e_{z} + 1}}\right]s_L\\
& = \left[\gamma_{z}^{h_{z}^{-e_{z}}}\right]s_L^{-1}\cdot \left[\gamma_{z}^{h_{z}^{
-e_{z}}},z\right]\Yade \cdot 
\left[\gamma_{z}^{h_{z}^{-e_{z}}}\right]s_L
= \left[\gamma_{z}^{h_{z}^{-e_{z}}}\right]s_L^{-1}\cdot [\gamma_{z}]f \cdot
\left[\gamma_{z}^{h_{z}^{-e_{z}}}\right]s_L,
\end{align*}
which implies
$\left[\gamma_{z}^{h_{z}^{-e_{z}}}\right]s_L\in
C_K([\gamma_{z}]f) \cdot x_{z}^{-1} \cdot x_{y}
= C_K([\gamma_{L}]f) ^ {x_{z}} \cdot x_{z}^{-1} \cdot x_{y} 
= x_{z}^{-1} \cdot C_K([\gamma_{L}]f) \cdot x_{y}$.
By Equation \ref{eq:centraliser_base},
the component $\left[\gamma_{z}^{h_{z}^{-e_{z}}}\right]s_L$
uniquely determines every other component of $s_L$ on 
$\terr(z) = \left\{\gamma_{z}^{h_{z}^0}, \dots,
\gamma_{z}^{h_{z}^{\vert h_{z}\vert - 1}}\right\} =
\left\{\gamma_{z}^{h_{z}^{0 - e_{z}}}, \dots, \gamma_{z}^{h_{z}^{\vert
h_{z} \vert - 1 - e_{z}}}\right\}$,
since for all $j\in\mathbb{Z}$ we can inductively conclude:
\begin{align*}
\left[\gamma_{z}^{h_{z}^{j+1-e_{z}}}\right]s_L
&=\left[\gamma_{y}^{h_{y}^{j}}\right]s_L^{h_{z}^{-1}t_L}
 \left[\gamma_{y}^{h_{y}^{j}}\right] \left( \left(f_{z}^{-1}\right)^{t_L} \cdot s_L^{t_L} \cdot f_{y} \right)\\
&= \left[\gamma_{z}^{h_{z}^{j-e_{z}}}\right]f_{z}^{-1} \cdot \left[\gamma_{z}^{h_{z}^{j-e_{z}}}\right]s_{i}
\cdot \left[\gamma_{y}^{h_{y}^{j}}\right]f_{y} \\
&= \left[\gamma_{z}^{h_{z}^{j-e_{z}}}\right]f^{-1} \cdot \left[\gamma_{z}^{h_{z}^{j-e_{z}}}\right]s_{i}
\cdot \left[\gamma_{y}^{h_{y}^{j}}\right]f.
\end{align*}
Let $c_{L,z}\in C_K([\gamma_{L}]f)$ such that
$\left[\gamma_{z}^{h_{z}^{-e_{z}}}\right]s_L = x_{z}^{-1} \cdot
c_{L,z} \cdot x_{y}$. 
Note that for $j=0$ we have
$\left[\gamma_{z}^{h^{j-e_{z}}}\right]s_L = x_{z}^{-1}\cdot c_{L,z}
\cdot x_{[z]\sigma_L}$.
By the above induction 
\[
\left[\gamma_{z}^{h^{j-e_{z}}}\right]s_L =
\left(\prod_{n=1}^j \left[\gamma_{z}^{h^{j-n-e_{z}}}\right]f^{-1}\right)\cdot 
x_{z}^{-1}\cdot c_{L,z}\cdot x_{[z]\sigma_L} \cdot \left( \prod_{n=1}^j
\left[ \gamma_{[z]\sigma_L}^{h^{j-n}} \right]f \right),
\]
where $0\leq j \leq \vert h_{z}\vert -1$. Recall $[\delta]f^{-1}=1_K$ for all $\delta \in
\supp(h_{y})\setminus\{\gamma_{y}\}$. Hence, for $1\leq j \leq
e_{z}$, we have 
\[
	\left[\gamma_{z}^{h^{j-e_{z}}}\right]s_L = 
	x_{z}^{-1}\cdot c_{L,z}\cdot x_{[z]\sigma_L} \cdot \left[\gamma_{[z]\sigma_L}
	\right]f 
\]
and for $j>e_{z}$ we have 
\begin{align*}
\left[\gamma_{z}^{h^{j-e_{z}}}\right]s_L &= 
[\gamma_{z}]f\cdot x_{z}^{-1}\cdot c_{z}\cdot x_{[z]\sigma_L} \cdot
\left[\gamma_{[z]\sigma_L}\right]f\\
& = x_{z}^{-1}\cdot [\gamma_{L}]f\cdot x_{z} \cdot x_{z}^{-1}
\cdot c_{L,z}\cdot x_{[z]\sigma_L}\cdot 
x_{[z]\sigma_L^{-1}}\cdot [\gamma_{L}]f\cdot x_{[z]\sigma_L}
	=x_{z}^{-1}\cdot c_{L,z}\cdot x_{[z]\sigma_L},	
\end{align*}
which shows surjectivity of $\Phi$. The well-definedness of $\Phi$ follows
by reversing the order of the arguments used.\\
We now show injectivity $\Phi$: Suppose $[(c,t)]\Phi = [(b,r)]\Phi$. Then $t=r$
and by comparing the images of $\gamma_{z}$ under the base components of
$[(c,t)]\Phi$ and $[(b,t)]\Phi$ one obtains $b=c$.
\end{proof}
Note that $\Phi$ in Theorem \ref{thm:structure_centraliser} is not a group homomorphisms.
However, using $\Phi$ we can obtain a generating set for $C_W(w)$.
\begin{cor}\label{extension_centraliser}
Assume the notation of Theorem \ref{thm:structure_centraliser}. Then
\[
\{1\} \to  B_w \xhookrightarrow{\iota}
C_W(w)
\stackrel{\rho}{\twoheadrightarrow} \operatorname{Stab}_{C_H(h)}(\mathcal{P}(w)) \to \{1\}
\]
is a short exact sequence of groups, where 
\[B_w \coloneqq 
\bigtimes_{L\in \L(w)} C_K([\gamma_L]f)^{\{\terr(z)\,:\, 
z\in \C(L,w)\}}\times K^{\Gamma \setminus \terr(w)},\] 
\[
\iota \colon B_w \hookrightarrow C_W(w),\,
c\mapsto \left[\left(c,1_H\right)\right]\Phi,\, \text{ and } \,
\rho \colon C_W(w)\twoheadrightarrow \operatorname{Stab}_{C_H(h)}(\mathcal{P}(w)),\, (s,t)\mapsto t.
\]
In particular, if $B_w = \langle X \rangle $ and 
$\operatorname{Stab}_{C_H(h)}(\mathcal{P}(w)) = \langle Y \rangle $, then
\[C_W(w) = \left\langle [X]\iota \cup \left\{[(1,t)]\Phi \,:\, t\in Y\right\}\right\rangle.\]
\end{cor}
\begin{proof}
It is clear that $\iota$ is a monomorphism and $\rho$ is an epimorphism. We show
$\im(\iota) = \ker(\rho)$, where the inclusion
$\subseteq$ is obvious. Now suppose $a\in \ker(\rho)$. Then $a=(s,1_H)\in C_W(w)$ for
some $s\in K^\Gamma$ and therefore, for $(c,1_H)\coloneqq
[(s,1_H)]\Phi^{-1}$, we obtain $[c]\iota = a$. As $[[(1,t)]\Phi]\rho = t$ for all $t \in Y$,
the claim for the generating set follows as we have an exact sequence of groups.
\end{proof}
\begin{example}
We use the notation from Example \ref{ex:conjugacy} and 
first compute $\vert C_{W_i}(w)\vert$ for $1 \leq i \leq 3$ using
Theorem \ref{thm:structure_centraliser}.
For this we need to conjugate the element $w$ to an element $v$ in sparse
wreath cycle decomposition, say $v \coloneqq w^b$. Note that in this case we
have $C_{W_i}(w)^b = C_{W_i}(v)$.
For example, using $b \coloneqq [1, (1,3,2,4)]\mathcal{E} \cdot [2,
(1,3)(2,4)]\mathcal{E}$ we have
\[
\arraycolsep=0.3em\def\arraystretch{1.8}
\begin{array}{l*{10}{c}}
v \coloneqq w^b = \bigl(
& \overset{\highlightB{1}}{(3,4)},
& \overset{\highlightB{2}}{()},
& \overset{\highlightC{3}}{()},
& \overset{\highlightC{4}}{(1,2)},
& \overset{\highlightD{5}}{(1,2,3)},
& \overset{\highlightD{6}}{()},
& \overset{\highlightA{7}}{(1,2)},
& \overset{\highlightZ{8}}{()};
& \overset{\text{top}}{(\highlightB{1,2})(\highlightC{3,4})(\highlightD{5,6})}
&\bigr)
\end{array}
\]
with
\[
\begin{blockarray}{*{10}{c}}
& \scriptstyle \highlightZ{1} & \scriptstyle \highlightZ{2} & \scriptstyle \highlightZ{3} & \scriptstyle \highlightZ{4} & \scriptstyle \highlightZ{5} & \scriptstyle \highlightZ{6} & \scriptstyle \highlightZ{7} & \scriptstyle \highlightZ{8} & \\
\begin{block}{l[*{8}{c}]>{\scriptstyle}r}
& \zero & \zero & \zero & \zero & \zero & \zero & \zero & \zero & \highlightZ{k_1}\\
& \{\highlightA{7}\} & \{\highlightB{1,2}\}, \{\highlightC{3,4}\} & \zero & \zero & \zero & \zero & \zero & \zero & \highlightZ{k_2} \\
\mathcal{P}(v) = & \zero & \zero & \zero & \zero & \zero & \zero & \zero & \zero & \highlightZ{k_3} \\
& \zero & \{\highlightD{5,6}\} & \zero & \zero & \zero & \zero & \zero & \zero & \highlightZ{k_4} \\
& \zero & \zero & \zero & \zero & \zero & \zero & \zero & \zero & \highlightZ{k_5} \\
\end{block}
\end{blockarray}.
\]
Using this colouring to encode the sparse wreath cycles, we write $u$ in a disjoint sparse wreath cycle decomposition:
\[
v = \highlightA{v_{2,1,1}} \cdot \highlightB{v_{2,2,1}} \cdot \highlightC{v_{2,2,2}} \cdot \highlightD{v_{4,2,1}}
\]
Further let $v_{i,j,\ell} = (e_{i,j,\ell}, h_{i,j,\ell})$. We choose points in the territory of each sparse wreath cycle as in Theorem \ref{thm:structure_centraliser}:
\[
\highlightA{\gamma_{2,1,1}} \coloneqq \highlightA{7},\quad
\highlightB{\gamma_{2,2,1}} \coloneqq \highlightB{1},\quad
\highlightC{\gamma_{2,2,2}} \coloneqq \highlightC{4}, \quad
\highlightD{\gamma_{4,2,1}} \coloneqq \highlightD{5}.
\]
First note that the left iterated cartesian product occurring in the source of the
bijection $\Phi$ defined in Theorem \ref{thm:structure_centraliser} does
not depend on the chosen top group, namely
\begin{align*}
&\left\vert
C_K\left([\highlightA{7}]f\right)^{\left\{\highlightA{\{7\}}\right\}} \times
C_K\left([\highlightB{1}]f\right)^{\left\{\highlightB{\{1,2\}}, \highlightC{\{3,4\}}\right\}} \times
C_K\left([\highlightD{5}]f\right)^{\left\{\highlightD{\{5,6\}}\right\}} \times
K^{\Gamma \setminus \terr(v)}
\right\vert \\
&\quad= \left\vert C_K\bigl((1,2)\bigr) \times C_K\bigl((3,4)\bigr)^2 \times C_K\bigl((1,2,3)\bigr) \times K \right\vert = 4,608\,.
\end{align*}
We have
\begin{align*}
\left\vert C_{W_1}(v) \right\vert &= 4,608 \cdot \left\vert \operatorname{Stab}_{C_{H_1}(h)}(\mathcal{P}(v)) \right\vert = 4,608 \cdot 8 = 36,864\;,\\
\left\vert C_{W_2}(v) \right\vert &= 4,608 \cdot \left\vert \operatorname{Stab}_{C_{H_2}(h)}(\mathcal{P}(v)) \right\vert = 4,608 \cdot 2 = 9,216\;,\\
\left\vert C_{W_3}(v) \right\vert &= \left\vert C_{W_2}(v) \right\vert\;.
\end{align*}

Let $\Phi$ be the bijection from Theorem \ref{thm:structure_centraliser} for the wreath product $W_1$,
where we choose elements $\highlightA{x_{2,1,1}} \coloneqq ()$, $\highlightB{x_{2,2,1}} \coloneqq ()$, $\highlightC{x_{2,2,2}}\coloneqq (1,3)(2,4)$ and $\highlightD{x_{4,2,1}} \coloneqq ()$.
We choose an element from the domain of $\Phi$ using the same notation as
in Theorem \ref{thm:structure_centraliser}
\[
(c, t) = \Bigl(
\bigl(\begin{blockarray}{*{1}{c}}
\scriptstyle \highlightA{\{7\}} \\
(1,2)
\end{blockarray}\bigr),
\bigl(\begin{blockarray}{*{2}{c}}
\scriptstyle \highlightB{\{1,2\}} & \scriptstyle \highlightC{\{3, 4\}} \\
(3,4), & (1,2)(3,4)
\end{blockarray}\bigr),
\bigl(\begin{blockarray}{*{1}{c}}
\scriptstyle \highlightD{\{4,5\}} \\
(1,3,2)
\end{blockarray}\bigr)
\bigl(\begin{blockarray}{*{1}{c}}
\scriptstyle \highlightZ{\{8\}} \\
(1,2,3,4)
\end{blockarray}\bigr),
\begin{blockarray}{*{1}{c}}
\\
(1,3)(2,4)
\end{blockarray}
\Bigr)
\]
and compute the image of $(c, t)$ under $\Phi$.
For this we first need to write the element $t \in \operatorname{Stab}_{C_{H_1}(h)}(\mathcal{P}(w))$ in a suitable decomposition:
\begin{align*}
t &= \highlightA{()^{e_{2,1,1}}} \cdot [\sigma_{2,1}]\Psi_{2,1}\cdot
\highlightB{(1,2)^{e_{2,2,1}}} \cdot \highlightC{(3,4)^{e_{2,2,2}}} \cdot [\sigma_{2,2}]\Psi_{2,2}
\cdot \highlightD{(5,6)^{e_{4,2,1}}} \cdot [\sigma_{4,2}]\Psi_{4,2}
\cdot \pi_0 \\
&= \highlightB{(1,2)} \cdot \highlightC{(3,4)} \cdot [(1,2)]\Psi_{2,2} = \highlightB{(1,2)} \cdot \highlightC{(3,4)} \cdot (1,4)(2,3)\;.
\end{align*}
Now we can compute the base component of $a \coloneqq (s, t) = [(c,t)]\Phi$.
For example, the images of $s$ under
$\highlightB{\terr(w_{2,2,1})} = \highlightB{\{1, 2\}}$
are:
\begin{align*}
\left[\highlightB{\gamma_{2,2,1}}^{h^{0 - e_{2,2,1}}}\right]s
&= \highlightB{[2]}s
= \highlightB{x_{2,2,1}^{-1}} \cdot \highlightB{c_{2,2,1}} \cdot \highlightC{x_{2,2,[1]\sigma_{2,2}}}\\
&= () \cdot (3,4) \cdot (1,3)(2,4)
= (1,3,2,4),\\
\left[\highlightB{\gamma_{2,2,1}}^{h^{1 - e_{2,2,1}}}\right]s
&= \highlightB{[1]}s
= \highlightB{x_{2,2,1}^{-1}} \cdot \highlightB{c_{2,2,1}} \cdot \highlightC{x_{2,2,[1]\sigma_{2,2}}} \cdot \highlightC{[\gamma_{2,2,[1]\sigma_{2,2}}]e}\\
&= () \cdot (3,4) \cdot (1,3)(2,4) \cdot (1,2)
= (1,3)(2,4).
\end{align*}
Repeating this computation for the territory of each wreath cycle and $\Gamma \setminus \terr(v)$ yields
\[
{\small
\arraycolsep=0.15em\def\arraystretch{1.8}
\begin{array}{l*{10}{c}}
a = \bigl(
& \overset{\highlightB{1}}{(1,3)(2,4)},
& \overset{\highlightB{2}}{(1,3,2,4)},
& \overset{\highlightC{3}}{(1,4)(2,3)},
& \overset{\highlightC{4}}{(1,3,2,4)},
& \overset{\highlightD{5}}{(1,3,2)},
& \overset{\highlightD{6}}{(1,3,2)},
& \overset{\highlightA{7}}{(1,2)},
& \overset{\highlightZ{8}}{(1,2,3,4)};
& \overset{\text{top}}{(1,3)(2,4)}
&\bigr)
\end{array}
}%
\]
\end{example}
\section{Performance of an implementation}\label{section-6}
The third author implemented the disjoint wreath cycle decomposition in
the \textsf{GAP} package \emph{WPE} \cite{Rober_Package}.
Building on this, he implemented algorithms using the theory in
this paper for working in finite wreath products $W=K\wrg H$, where $\Gamma$ is
finite and $H\leq \sym(\Gamma)$. The \textsf{GAP}-package
\emph{WPE} provides methods to test whether
two elements of $K\wrg \sym(\Gamma)$ are conjugate in $W$ and, in this case,
computes a conjugating element. Moreover it provides algorithms to compute
representatives of the $W$-conjugacy classes of elements and methods for 
efficient centraliser computations in $W$.\\
To highlight the efficiency of the new methods, we present sample
computations. These were performed on a 1,8GHz IntelCore i5-5350U and are presented in the
following tables. The first column lists the groups we considered, the
second column labelled \textsf{GAP4}
lists the time taken by native GAP 4.11.1 code and the final
column lists the time taken by the package WPE \cite{Rober_Package} loaded
in GAP 4.11.1. For Table (a) we precomputed a set of $100$ random pairs of
conjugate elements (by conjugating $100$ random elements by a further $100$
random elements). We list the average time of computing a conjugating
element. In some cases, the computation using native \textsf{GAP4}-code
for a single computation took too long and the computation was terminated
after the time recorded in column
\textsf{GAP4}. In this case the symbol $>$ indicates that the computation
was terminated. 
In Table (b) we list the average times to compute
the centralisers of $100$ precomputed random elements. Finally,
Table (c) lists the time taken to compute a set of
representatives of the conjugacy classes of elements of the groups $W$ listed
in the first column and the last column contains the number of conjugacy
classes of elements of that group.
\begin{figure}[H]
\centering
{\small
\begin{subfigure}[h!]{0.5\linewidth}
\begin{tabular}{  c  c  c }\toprule
Group & \textsf{GAP4}& WPE\cite{Rober_Package}\\ \midrule
$S_4\wr S_8$ & $<1$s  & $<1$s \\ 
$S_{10}\wr M_{24} $ & $22$s & $<1$s\\  
$S_{25}\wr S_{100}$ & $>40$m & $<1$s \\ 
$\operatorname{SL}(2,2)\wr \operatorname{PSp}(4,3) $ & $<1$s & $<1$s\\ 
$\operatorname{SL}(2,2) \wr \operatorname{PSU}(4,4)$ & $>40$m& $<1$s\\
$\operatorname{PSL}(5,3)\wr \operatorname{PSU}(6,2)$ & $>40$m & $20$s\\ \bottomrule
\end{tabular}\subcaption{Conjugacy problem}
\end{subfigure}\hfill
\begin{subfigure}[h!]{0.5\linewidth}
\begin{tabular}{  c  c  c }\toprule
Group & \textsf{GAP4} & WPE\cite{Rober_Package}\\ \midrule
$S_4\wr S_8$ & $<1$s & $<1$s \\ 
$S_{10}\wr M_{24} $ & $22$s & $<1$s\\ 
$S_{25}\wr S_{100}$ & $>40$m & $<1$s \\ 
$\operatorname{SL}(2,2)\wr \operatorname{PSp}(4,3) $ & $<1$s & $<1$s\\ 
$\operatorname{SL}(2,2) \wr \operatorname{PSU}(4,4)$ & $>40$m &$<1$s\\ 
$\operatorname{PSL}(5,3)\wr \operatorname{PSU}(6,2)$ & $>40$m &$14$s \\ \bottomrule
\end{tabular}\subcaption{Centraliser of elements}
\end{subfigure}\\[.5cm] \newpage
}%
\begin{subfigure}[h]{0.7\linewidth}
\begin{tabular}{  c  c  c c}\toprule
Group &\textsf{GAP4} & WPE\cite{Rober_Package} & $\#$Conjugacy classes\\ \midrule
$\operatorname{SL}(2,2)\wr \operatorname{PSL}(2,7) $ &$<1$s &$<1$s &$216$\\
$S_4\wr S_8$ & $60$s & $<1$s & $6,765$\\ 
$A_5\wr M_{11}$ &$>40$m &$125$s &$15,695$\\ 
$\operatorname{SU}(3,2)\wr A_7$ & $35$m & $22$s & $398,592$\\
$M_{24}\wr S_7$ & $>40$m &$145$s & $9,293,050$ \\
$S_{7}\wr \operatorname{PSL}(2,7)$ &$>40$m &$300$s & $15,342,750$\\ \bottomrule 
\end{tabular}\subcaption{Conjugacy class representatives}
\end{subfigure}
\caption{Time comparisons between native GAP4 code and the WPE package}
\end{figure}
Our experiments show that the computations in the wreath products are
roughly as hard as the corresponding computations in the groups $K$ and $H$.
\section*{Acknowledgements}
We thank Colva Roney-Dougal for discovering Ore's paper \cite{Ore} which
initiated this project. Sebastian Krammer presented the wreath cycle
decomposition in modern language for an algorithm in his M.Sc.\ thesis. We also 
thank Max Horn and Alexander Hulpke for helpful suggestions and
discussions regarding the \textsf{GAP}-implementation. We thank
anonymous referees for their helpful in-depth comments.\\
This is a contribution to  Project-ID 286237555 – TRR 195 -- by the
Deutsche Forschungsgemeinschaft (DFG, German Research Foundation).
The first author acknowledges financial support from the 
RWTH Scholarships for Doctoral Students.

\end{document}